\documentclass[11pt]{article}
\usepackage[a4paper,margin=3cm]{geometry}
\usepackage[T1]{fontenc}      
\usepackage{lmodern}          
\usepackage{amsmath}          
\usepackage{amssymb}          
\usepackage{amsthm}           
\usepackage{mathtools}        
\usepackage{tikz}             
\usepackage{placeins}         
\usepackage[hidelinks]{hyperref} 
\hypersetup{
  pdftitle={Counting spanning quasi-trees of ribbon graphs:
            determinants and \#P-completeness},
  pdfauthor={William Whistler}}

\newtheorem{theo}{Theorem}[section]
\newtheorem{lemm}[theo]{Lemma}
\newtheorem{coro}[theo]{Corollary}
\newtheorem{obse}[theo]{Observation}
\newtheorem{conj}[theo]{Conjecture}
\newtheorem*{theoA}{Theorem A}
\newtheorem*{theoB}{Theorem B}
\newtheorem*{theoC}{Theorem C}
\newtheorem*{theoD}{Theorem D}

\newcommand{\bc}{\operatorname{bc}}               
\newcommand{\qtn}{N}                              
\newcommand{\eg}{\varepsilon}                     
\newcommand{\crx}[1]{n_{\times}(#1)}              
\newcommand{\corner}[2]{(#1,\mathrm{#2})}         
\newcommand{\fim}{A}                              
\newcommand{\lcg}{H}                              
\newcommand{\skl}{\widetilde{A}}                  
\newcommand{\GFtwo}{\mathbb{F}_{2}}
\newcommand{\dettwo}{{\det}_{2}}                  
\newcommand{\nulltwo}{\operatorname{null}_{2}}    
\newcommand{\rktwo}{\operatorname{rk}_{2}}        

\newcommand{\ip}{q}                               
\newcommand{\ipn}{q_{N}}                          
\newcommand{\fourmap}{F}                          
\newcommand{\atrail}{C}                           
\newcommand{\smoothA}[1]{\alpha_{#1}}             
\newcommand{\smoothB}[1]{\beta_{#1}}              
\newcommand{\crossT}[1]{\chi_{#1}}                
\newcommand{\blift}{A^{\sharp}}                   

\title{Counting spanning quasi-trees of ribbon graphs:\\
determinants and \#P-completeness}
\author{William Whistler\\
\texttt{will@whistler.uk}}
\date{}

\begin{document}
\maketitle

\begin{abstract}
A quasi-tree of a connected ribbon graph is a spanning ribbon
subgraph with exactly one boundary component; quasi-trees play the
role of spanning trees in the topological graph theory of embedded
graphs. We prove that counting them is \#P-complete under
polynomial-time Turing reductions, already for bouquets. The proof identifies every
nonempty framed chord diagram, up to natural identifications,
with a $4$-regular map equipped with a distinguished A-trail, in
such a way that quasi-trees correspond to A-trails, whose counting
is \#P-complete by a theorem of Ge and \v{S}tefankovi\v{c}.
Through the framed Cohn--Lempel equality the count is also an
interlace-polynomial evaluation --- $\ip(\lcg;2,1)$, the number of
full-rank induced subgraphs of the looped circle graph $\lcg$ of
the diagram --- placing it on the line $y=1$ left open in the
complexity classification of Bl\"aser and Hoffmann; a cloning
argument then makes every fixed rational point of that line,
other than the trivial $(1,1)$, \#P-hard on looped circle graphs,
even when a framed chord representation is supplied. On the tractable side, the same
$\GFtwo$ model yields short proofs of the known determinantal
cases: for orientable ribbon graphs the count is a determinant,
essentially the Matrix--Quasi-tree Theorem of Merino, Moffatt and
Noble, proved here via Bouchet's principal unimodularity, and for
bouquets with exactly one non-orientable loop it is a sum of two
orientable determinants, equivalent by a rank-one determinant
identity to the determinant formula of Deng, Jin and Yan. A
concluding conjecture locates the boundary of these
determinantal methods at regularity of the lifted delta-matroid,
where the torsor theory of Baker, Ding and Kim then applies.

\medskip
\noindent\textit{MSC 2020:} 05C10, 05C30 (primary); 68Q17, 05C31,
05C45, 05B35 (secondary).

\noindent\textit{Keywords:} ribbon graph; bouquet; quasi-tree;
chord diagram; interlacement; interlace polynomial; principal
unimodularity; A-trail; transition system; Eulerian tour;
\#P-completeness.
\end{abstract}

\tableofcontents

\section{Introduction}\label{sec:intro}

Kirchhoff's matrix--tree theorem is the prototype of a rare
phenomenon: an exact counting problem solved by a determinant. In
the complexity classification that began with Valiant \cite{Val79},
such problems are the exception, and the contrast between
determinantal and \#P-complete counting is a recurring theme. This
paper exhibits both sides of that contrast for the spanning
quasi-trees of ribbon graphs: important orientable and
near-orientable classes are determinantal, whereas the unrestricted
counting problem is \#P-complete.

A ribbon graph is a surface with boundary presented
combinatorially: vertex discs joined by bands, each band possibly
half-twisted --- equivalently, a graph cellularly embedded in a
surface, orientable or not. A \emph{quasi-tree} is a spanning
ribbon subgraph with exactly one boundary component; when the
surface is the plane, quasi-trees are spanning trees. The term was
coined by Dasbach, Futer, Kalfagianni, Lin and Stoltzfus
\cite[Def.~3.1]{DFKLS10} in the wake of their work relating the
Jones polynomial to ribbon graphs \cite{DFKLS08}; the
Bollob\'as--Riordan polynomial admits an expansion indexed by
quasi-trees
\cite{BR01,BR02,CKS11}, they are the feasible sets of the
ribbon-graphic delta-matroid (delta-matroids being Bouchet's
\cite{Bou87b}; the ribbon-graphic case is \cite{CMNR19}), and
partial duality \cite{Chm09} acts on them by symmetric difference
\cite[Thm~5.1]{CMNR19} (see, e.g., \cite{EMM13} for
background).

On the tractable side, Merino, Moffatt and Noble \cite{MMN25}
proved a Matrix--Quasi-tree Theorem: the number of quasi-trees of
an orientable ribbon graph is a determinant. Deng, Jin and Yan
\cite{DJY24} extended the determinant to bouquets with exactly one
non-orientable loop, and Ding and Kim \cite{DK26} to the strictly
larger class of pseudo-orientable ribbon graphs; Deng, Jin, Yan and
Yan \cite{Deng25} give a symbolic determinant-and-reduction
formula encoding all quasi-trees of an arbitrary bouquet, and
Merino \cite{Mer23} computes the count for fans and wheels. Baker, Ding
and Kim \cite{BDK25} refine the critical-group picture to a torsor
structure, which T\'othm\'er\'esz \cite{Tot26} identifies with
the sandpile action of the medial digraph. We are not aware of a
hardness result in this literature.

The present paper's viewpoint is that quasi-tree counting is an
\emph{interlace polynomial} evaluation; the placement brings it
into the complexity theory of that polynomial, and identifies the
previously unresolved evaluation at which its unrestricted
complexity is then determined. The interlace polynomials of Arratia,
Bollob\'as and Sorkin \cite{ABS04a,ABS04b} arose from counting
Eulerian circuits in the service of DNA sequencing; Traldi
\cite{Tra13} interpreted them through circuit partitions of
$4$-regular graphs; and Bl\"aser and Hoffmann \cite{BH08} mapped
the complexity of evaluating the two-variable polynomial
$\ip(x,y)$, proving \#P-hardness almost everywhere --- with the
line $y=1$ left open. On the trail side, Kotzig \cite{Kot68b}
showed that A-trails of plane $4$-regular maps are counted by
spanning trees; Andersen, Bouchet and Jackson \cite{ABJ96}
developed the structure theory of A-trails on surfaces of low
genus, and Yan and Jin \cite{YJ22} on general surfaces, connecting
them to twisted duals and quasi-tree bouquets; and counting Eulerian circuits is \#P-complete in general
\cite{BW05}, while Ge and \v{S}tefankovi\v{c} \cite{GS12} proved
both Eulerian-circuit counting in $4$-regular graphs and A-trail
counting in $4$-regular maps \#P-complete.

We prove four results. Throughout, a \emph{framed chord diagram} is
a bouquet presented combinatorially --- chords on a circle, one
twist bit per chord --- and $\lcg$ is its looped circle graph:
vertices are chords, edges are interlacements, loops mark twists.
For a connected ribbon graph $G$ --- in particular a bouquet, or
a framed chord diagram --- the object counted is
\[
\qtn(G)\coloneqq
\#\bigl\{S\subseteq E(G):
(V(G),S)\ \text{has exactly one boundary component}\bigr\},
\]
the spanning ribbon subgraph on $S$ being written $(V(G),S)$, and
the counting problem takes such a $G$ as input; formal encodings
are fixed in Section~\ref{sec:atrails}.

\begin{theoA}[hardness; \S\ref{sec:atrails}]
Computing $\qtn$ is \#P-complete under polynomial-time Turing
reductions, already for bouquets. Equivalently: evaluating
$\ip(\lcg;2,1)=\ipn(\lcg;1)$ --- counting the induced subgraphs of
$\lcg$ whose adjacency matrix is nonsingular over $\GFtwo$ --- is
\#P-complete for looped circle graphs, even when a framed chord
representation is supplied. Consequently, for every fixed rational
$\xi\neq 1$, evaluating $\ip(\lcg;\xi,1)$ is \#P-hard on the same
class, under polynomial-time Turing reductions.
\end{theoA}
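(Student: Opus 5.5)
We follow the route sketched in the abstract. Membership in \#P is immediate, since for $S\subseteq E(G)$ the number of boundary components of $(V(G),S)$ can be counted in polynomial time by tracing the boundary through the signed rotation system. For hardness we reduce from counting A-trails in $4$-regular maps, which is \#P-complete by Ge and \v{S}tefankovi\v{c} \cite{GS12}.

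\emph{Step 1: the bijection.} Given a connected $4$-regular map $F$, we first compute one A-trail $\atrail$ in polynomial time, for instance by taking any Euler circuit and repairing it locally at crossing-type transitions. If $F$ has no A-trail, the count is $0$ and there is nothing to prove. We then read $\atrail$ around the surface. Each vertex $v$ is visited twice, which gives a chord; the transition of $\atrail$ at $v$, together with the local orientation of the embedding, gives a twist bit. This produces a framed chord diagram $D(F,\atrail)$ and hence a bouquet.

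At each vertex there are three transitions: the two smoothings $\smoothA{v},\smoothB{v}$ and the crossing $\crossT{v}$. An A-trail uses only non-crossing transitions, so at each $v$ it either agrees with $\atrail$ or switches to the other smoothing. We send an A-trail $T$ to the chord set $S(T)=\{v : T \text{ differs from } \atrail \text{ at } v\}$. The claim is that the circuit partition of $F$ obtained by switching exactly the transitions in $S$ is isotopic to the boundary of the spanning ribbon subgraph $(V,S)$ of the bouquet. Indeed, the boundary of the single vertex disc traces $\atrail$, and adding a band for a chord performs exactly the smoothing switch at that vertex, whether the band is twisted or untwisted according to the framing bit. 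Hence $T$ is a single circuit if and only if $(V,S)$ has one boundary component, and
\[
\#\{\text{A-trails of } F\}=\qtn\bigl(D(F,\atrail)\bigr).
\]

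The main obstacle is making this local correspondence rigorous on non-orientable surfaces: one must check that the twist bit is well defined with respect to a chosen local orientation along $\atrail$, and that ``switch to the other smoothing'' matches adding a band with the correct twist. I would verify this by a case check on the two possible framings and the two relative orientations at a vertex. Surjectivity onto every framed chord diagram is not needed for the reduction, only that the construction is polynomial and preserves the count.

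\emph{Step 2: the interlace form.} By the framed Cohn--Lempel equality, the number of boundary components of $(V,S)$ is $1+\nulltwo$ of the principal submatrix of the adjacency matrix of $\lcg$ on $S$. I take this equality as established earlier in the paper; if not, it is proved by induction on $|S|$ via the local-complementation effect of adding a band. Therefore $\qtn$ equals the number of nonsingular induced subgraphs of $\lcg$, which is $\ipn(\lcg;1)=\ip(\lcg;2,1)$. The diagram $D(F,\atrail)$ supplies the framed chord representation, which gives the second sentence of the theorem.

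\emph{Step 3: other points $\xi\neq1$.} Here we use cloning. Replace each vertex of $\lcg$ by $k$ copies, realized in the chord diagram by $k$ parallel chords. This multiplies the generating function in a controlled way, so that $\ip(\lcg_k;\xi,1)$ is a polynomial identity in the nullity distribution $\{a_j\}$, where $a_j$ is the number of subsets of nullity $j$. I would pick the cloning, for example non-interlaced parallel chords with the same framing, so that the rank profile transforms by a computable factor depending on $k$ and $j$. Varying $k=1,\dots,n+1$ then gives a Vandermonde system in the unknowns $a_j$, which is invertible exactly when $\xi\neq1$. Solving it recovers $a_0=\ip(\lcg;2,1)$ through Turing reductions. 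The delicate point is choosing a clone gadget that stays within framed chord diagrams and yields distinct evaluation nodes for all $\xi\neq1$, including $\xi=0$ and negative values. If parallel clones fail, I would try interlaced twisted pairs.
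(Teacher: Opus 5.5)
Your Steps~1 and~2 follow the paper's route: the A-trail dictionary, then the framed Cohn--Lempel equality. Your repair of an arbitrary Euler circuit is a valid substitute for the Yan--Jin merging lemma. At a vertex where a single circuit uses $\crossT{v}$, exactly one of the two smooth transitions keeps one circuit, and switching there leaves the other vertices untouched. Your worry about non-orientable surfaces goes away once you see that the calibration is purely combinatorial. The two band wirings of a chord realise exactly the two transitions other than the one used by $\atrail$. Exactly one of these is smooth, and $t_{v}$ is \emph{defined} to select it, so no surface is involved. You should also convert Ge and \v{S}tefankovi\v{c}'s sequence-based A-trail count to transition systems; the factor $2m$ is computable.

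The genuine gap is Step~3, which as written does not prove the last sentence of the theorem. There are two problems.

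\emph{Wrong unknowns.} At $y=1$, with $0^{0}=1$, every subset of positive nullity contributes $0$, so
\[
\ip(\lcg;x,1)=\sum_{S:\,\nulltwo\fim[S]=0}(x-1)^{|S|}
\]
retains no trace of the numbers $a_{j}$ with $j\ge 1$. What interpolation can recover is the profile $b_{m}$ of nonsingular $m$-subsets, and then $a_{0}=\sum_{m}b_{m}$.

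\emph{The gadget fails on twisted chords.} Nested clones of a twisted chord are pairwise non-adjacent \emph{looped} vertices, and two of them span the nonsingular block $I_{2}$. Already for a single twisted chord you get $\ip(\lcg^{(k)};x,1)=x^{k}$ instead of $1+k(x-1)$.

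The missing idea is that twins must have identical rows of $\fim$. Clones of an untwisted chord must nest (reversed slot order in the two blocks). Clones of a twisted chord, each keeping the twist, must pairwise interlace (equal slot order), so their mutual block is all-ones. Then a nonsingular principal submatrix contains at most one twin per original chord, and choosing one twin from each $v\in T$ reproduces $\fim[T]$. This gives $\ip(\lcg^{(k)};x,1)=\ip(\lcg;k(x-1)+1,1)$.

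Querying $k=1,\dots,n+1$ then evaluates a polynomial of degree at most $n$ at the nodes $k(\xi-1)+1$. These are distinct for every $\xi\neq 1$, so $\xi=0$ and negative $\xi$ cause no trouble. Lagrange interpolation at $x=2$ returns $\qtn(D)$. Your fallback of ``interlaced twisted pairs'' points the right way. It must be pairwise interlacing among all $k$ twisted clones, combined with nesting for untwisted ones, and the resulting identity has to be verified.
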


\begin{theoB}[the A-trail dictionary; \S\ref{sec:atrails}]
For every connected $4$-regular map $\fourmap$ with a chosen
A-trail $\atrail$, one can construct in polynomial time a framed
chord diagram $D(\fourmap,\atrail)$ whose states correspond
bijectively to the smooth transition systems of $\fourmap$,
preserving the number of circuits; consequently
$\qtn(D(\fourmap,\atrail))$ is the number of A-trails of
$\fourmap$. Conversely, every nonempty framed chord diagram arises
from such a pair, up to independent reversal of the cyclic order
at each vertex and cyclic shift or reversal of $\atrail$. An A-trail of a connected $4$-regular
map always exists and can be constructed in polynomial time.
\end{theoB}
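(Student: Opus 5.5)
Given $(\fourmap,\atrail)$, I will build $D(\fourmap,\atrail)$ by reading off the cyclic sequence of vertices visited by $\atrail$. Since $\atrail$ is an Eulerian circuit of a $4$-regular graph, each vertex $v$ is visited exactly twice; placing these $2n$ visits in order on a circle and joining the two visits of each $v$ gives a chord diagram with one chord per vertex. At each vertex there are three transitions: the one used by $\atrail$; the other smooth (non-crossing in the rotation) transition; and the crossing transition. An A-trail uses only smooth transitions, so at $v$ the transition $\crossT{v}$ is excluded, and the two smooth ones are $\atrail$'s own transition and the other, say $\smoothA{v}$ and $\smoothB{v}$.

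The twist bit of chord $v$ will record whether switching from $\atrail$'s transition to the other smooth transition reconnects the two arcs of $\atrail$ through $v$ orientably or with a reversal. This is determined by comparing the rotation at $v$ with the direction of traversal on the two passes, so it can be computed locally in polynomial time.

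The key identification I will establish is this: for a state $S\subseteq V$ (meaning the set of vertices where we switch to the other smooth transition), the transition system on $\fourmap$ and the spanning ribbon subgraph $(V(D),S)$ of the bouquet have the same number of circuits. I will do this by identifying the boundary of the bouquet with the curves traced by the transition system. The vertex disc's boundary is the circle of $\atrail$. Adding a band for chord $v$ surgers that boundary at the two visit points, either untwisted or twisted, exactly as resmoothing $v$ surgers the circuits. Induction on $|S|$ then gives the equality, provided the framing convention matches at each step. That proves $N(D)=\#\{$A-trails$\}$, since the A-trails are exactly the one-circuit smooth systems.

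For the converse, given a nonempty framed chord diagram I will take $\fourmap$ to be the $4$-regular graph obtained by drawing the chords' endpoints as the circle's visits with each chord collapsed to a vertex. I will choose the rotation at each vertex so that $\atrail$'s transition is smooth and the other smooth transition has the prescribed twist. The ambiguity in this choice is the reversal of rotation at each vertex, and in how the circle is read, a cyclic shift or reversal of $\atrail$. This matches the stated identifications. Connectivity holds because $\atrail$ traverses everything.

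Existence and construction of an A-trail will follow by a greedy argument. Start from the all-smooth transition system, a circuit decomposition with no crossings. While there is more than one circuit, pick a vertex where two distinct circuits meet, which exists by connectivity. Switching to the other smooth transition at that vertex merges them, because switching between the two smooth transitions at a vertex shared by two different circuits always joins them into one. Each step drops the circuit count by one, giving polynomial time.

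I expect the main obstacle to be getting the twist convention right so that the circuit counts match for all $S$ simultaneously, not just $|S|\le 1$. Interacting chords in the bouquet and sequential resmoothings in $\fourmap$ must compose the same way. I would check this through the standard fact that both counts equal $|S|$-dependent corank data of the looped interlace matrix over $\GFtwo$ (Cohn--Lempel). On the map side that fact holds for the Eulerian circuit $\atrail$, and on the bouquet side it is the framed Cohn--Lempel equality. Matching the two looped adjacency matrices then reduces the claim to showing that the loops coincide, which is precisely the local twist rule.
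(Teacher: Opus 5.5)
Your proposal is correct and follows the paper's proof: the same passage-sequence construction, the same local twist calibration (whether the other smooth transition at $v$ is the orientation-consistent or the orientation-inconsistent re-routing of $\atrail$), the same converse obtained by giving each vertex the rotation whose two smooth transitions are the pass and band wirings, and the same Yan--Jin switching argument for existence. The obstacle you anticipate does not arise: name the four half-edges at $v$ by the corners of its chord (incoming ends $\corner{a}{L},\corner{b}{L}$, outgoing ends $\corner{a}{R},\corner{b}{R}$), and the corner graph of every state $S$ is literally the half-edge graph of the corresponding transition system (Lemma~\ref{lemm:engine}(1)); since each chord's wiring is a fixed pairing of its own four corners, independent of the rest of $S$, the circuit counts agree for all $S$ at once with no induction, and your Cohn--Lempel cross-check, though valid, is redundant because the paper's framed Cohn--Lempel is itself proved through this identification.
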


\begin{theoC}[orientable determinant; \S\ref{sec:determinants}]
The number of spanning quasi-trees of a connected orientable
ribbon graph is computable in polynomial time; for a bouquet it
equals
$\det(I+\skl)$, where $\skl$ is the first-endpoint principally
unimodular skew lift of the interlacement matrix defined in
Section~\ref{subsec:orientable}.
\end{theoC}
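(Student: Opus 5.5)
We reduce to bouquets and then run a Cauchy--Binet style expansion over $\GFtwo$ lifted to $\mathbb{Z}$.

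\textbf{Reduction to bouquets.} For a connected orientable ribbon graph $G$, choose a spanning tree $T$ in polynomial time. Contract $T$ to obtain a bouquet $G/T$; contraction of non-loop edges preserves orientability. Partial duality with respect to $T$ acts on quasi-trees by symmetric difference, so quasi-trees of $G$ containing $T$ biject with quasi-trees of $G/T$. This is not the whole count, though. The cleaner route is the standard fact that the quasi-tree sets of $G$ and of the partial dual $G^{T}$ correspond by $S\mapsto S\triangle T$ (\cite[Thm~5.1]{CMNR19}). Here $G^{T}$ is a bouquet, orientable because partial duality preserves orientability, and it is built in polynomial time. Hence $\qtn(G)=\qtn(G^{T})$, and it suffices to treat a bouquet, i.e.\ a framed chord diagram with all twist bits zero.

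\textbf{The $\GFtwo$ criterion.} For a bouquet $B$ with chord set $E$ and interlacement matrix $A$ over $\GFtwo$ (diagonal equal to the twist bits, here zero), I would use the standard fact, presumably established earlier in the paper via the framed Cohn--Lempel equality: the number of boundary components of $(V,S)$ is $1+\nulltwo(A[S])$. So $S$ is a quasi-tree iff $\det_2 A[S]=1$, and
\[
\qtn(B)=\#\{S\subseteq E:\ \dettwo A[S]=1\}.
\]

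\textbf{Lifting to a determinant.} Let $\skl$ be the first-endpoint skew lift: fix a base point on the circle, and for interlaced chords $e,f$ set $\skl_{ef}=+1$ if $e$'s first endpoint precedes $f$'s and $-1$ otherwise, with $\skl_{ee}=0$. Then $\skl$ is a skew-symmetric integer matrix reducing to $A$ mod $2$. The expansion $\det(I+\skl)=\sum_{S}\det\skl[S]$ reduces the claim to showing that every principal minor $\det\skl[S]$ lies in $\{0,1\}$, i.e.\ that $\skl$ is principally unimodular. Given this, $\det\skl[S]\equiv\dettwo A[S]\pmod 2$ forces $\det\skl[S]=1$ exactly when $\dettwo A[S]=1$, and the formula follows.

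Principal minors of a skew-symmetric matrix are squares of Pfaffians, hence nonnegative. So the main obstacle is proving $|\operatorname{Pf}\skl[S]|\le 1$, and it suffices to show that $\skl$ is principally unimodular, which is Bouchet's theorem that the oriented interlacement matrix of an Eulerian tour/chord diagram is principally unimodular. I would first check that the first-endpoint sign convention agrees, up to conjugation by a diagonal $\pm1$ matrix, with Bouchet's orientation; diagonal conjugation preserves principal minors. Then I would invoke Bouchet. If a self-contained argument is wanted, induct on $|S|$: the subdiagram on $S$ is again a chord diagram whose first-endpoint lift is $\skl[S]$, so it suffices to show $\det\skl\in\{0,1\}$ for every diagram. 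Pivoting on an interlaced pair $e,f$ (the principal pivot) gives $\det\skl=\skl_{ef}^2\det(\skl*\{e,f\})[E\setminus\{e,f\}]$. The pivoted matrix is, up to diagonal signs, the first-endpoint lift of the diagram obtained by the chord-diagram pivot (exchanging the arcs between the endpoints of $e$ and $f$). Verifying this sign identity is the delicate case check, and closes the induction.
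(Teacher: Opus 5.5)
Your proof is correct and essentially matches the paper's. The bouquet case is exactly its argument: expand $\det(I+\skl)=\sum_{S}\det\skl[S]$, note $\det\skl[S]=\operatorname{Pf}(\skl[S])^{2}\ge 0$, use Bouchet's principal unimodularity to land in $\{0,1\}$, then reduce modulo $2$ and apply the framed Cohn--Lempel criterion. Your reduction through the partial dual $G^{T}$ is the paper's substitution at a spanning tree, which it remarks is the boundary-walk presentation of $G^{T}$. The paper proves directly that all twist bits vanish, from the parity $\bc(S)\equiv v+|S|\pmod 2$, where you cite preservation of orientability under partial duality; and your optional pivoting argument for principal unimodularity is not needed, since you already invoke Bouchet.
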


\begin{theoD}[one non-orientable loop; \S\ref{sec:determinants}]
Let $D$ be a framed chord diagram of size $n$ --- equivalently, a
bouquet --- whose unique twisted chord is $e_{0}$, and let $D_{0}$
be $D$ with $e_{0}$ untwisted. Then
\[
\qtn(D)=\qtn(D_{0})+\qtn(D_{0}-e_{0})
=\det\bigl(I_{n}+\skl(D_{0})\bigr)
+\det\bigl(I_{n-1}+\skl(D_{0}-e_{0})\bigr),
\]
computable in polynomial time.
\end{theoD}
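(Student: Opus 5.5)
The plan has two parts. First I prove the combinatorial identity $\qtn(D)=\qtn(D_{0})+\qtn(D_{0}-e_{0})$ in the $\GFtwo$ model. Then I invoke Theorem~C for each term.

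For a bouquet I use the standard fact, developed in Section~\ref{sec:determinants}, that a subset $S$ of chords is a quasi-tree exactly when the principal submatrix $\fim[S]$ of the framed interlacement matrix is nonsingular over $\GFtwo$. Here $\fim$ is the adjacency matrix of the looped circle graph $\lcg$, with diagonal entries the twist bits. Hence $\qtn(D)=\#\{S:\dettwo \fim(D)[S]=1\}$.

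The matrices $\fim(D)$ and $\fim(D_{0})$ differ only in the diagonal entry at $e_{0}$. I split the subsets $S$ according to whether $e_{0}\in S$.
\begin{itemize}
\item If $e_{0}\notin S$, the two principal submatrices coincide, so these $S$ contribute equally to $\qtn(D)$ and $\qtn(D_{0})$.
\item If $e_{0}\in S$, expand along row $e_{0}$. By multilinearity of the determinant in that row, $\dettwo \fim(D)[S]=\dettwo \fim(D_{0})[S]+\dettwo \fim(D_{0})[S\setminus e_{0}]$ over $\GFtwo$.
\end{itemize}

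This additivity is mod $2$, not integral, so turning it into a count needs one more step. I use the involution $S\mapsto S\triangle\{e_{0}\}$, which pairs $S\ni e_{0}$ with $T=S\setminus e_{0}$. Write $a=\dettwo\fim(D_{0})[T]$ and $b=\dettwo\fim(D_{0})[T\cup e_{0}]$. The pair $\{T,T\cup e_{0}\}$ then contributes
\begin{itemize}
\item $a+(a\oplus b)$ to $\qtn(D)$, and
\item $a+b$ to $\qtn(D_{0})$.
\end{itemize}
The difference is $(a\oplus b)-b$, which equals $a-2ab$. Summing over all $T\subseteq E\setminus e_{0}$ gives
\[
\qtn(D)-\qtn(D_{0})=\qtn(D_{0}-e_{0})-2\,\#\{T: a=b=1\}.
\]
So the identity holds exactly when no $T$ has both $\fim(D_{0})[T]$ and $\fim(D_{0})[T\cup e_{0}]$ nonsingular.

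This vanishing is the main obstacle, and I would settle it with a parity property of the orientable matrix $\fim(D_{0})$. Since $D_{0}$ is untwisted, $\fim(D_{0})$ is symmetric with zero diagonal over $\GFtwo$, that is, alternating. An alternating matrix has even rank, so every odd-order principal minor vanishes. The sets $T$ and $T\cup e_{0}$ have sizes of opposite parity, so at most one of the two minors can be nonzero. Hence $ab=0$ always, and the identity follows.

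I note that $D_{0}-e_{0}$ is again an orientable bouquet. Applying Theorem~C to $D_{0}$ and to $D_{0}-e_{0}$ gives the two determinants $\det(I_{n}+\skl(D_{0}))$ and $\det(I_{n-1}+\skl(D_{0}-e_{0}))$. The matrix $\skl$ is built from the chord diagram in polynomial time, so the whole expression is computable in polynomial time. I would check that the right-hand side is $\qtn(D_{0}-e_{0})$ with the induced chord diagram, which holds because deleting a chord gives the principal submatrix. The twist at $e_{0}$ plays no role on that side.
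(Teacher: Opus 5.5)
Your proof is correct, and its skeleton matches the paper's. Both reduce Theorem~D to the pointwise statement of Lemma~\ref{lemm:pointwise}: for $T\subseteq E(D)\setminus\{e_{0}\}$, the set $T\cup e_{0}$ is a quasi-tree of $D$ exactly when precisely one of $T$, $T\cup e_{0}$ is a quasi-tree of $D_{0}$. Both then sum over $T$ and apply Theorem~C.

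The difference lies in how that lemma is proved.

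\emph{The paper's route.} It uses corner tracing. Since $T$ contains no twisted chord, its corner graph has no crossed edges. So, by handle attachment (Lemma~\ref{lemm:handle}), when both pass edges of $e_{0}$ lie on one cycle a twisted $e_{0}$ leaves $\bc$ unchanged where an untwisted one raises it by one. On two cycles, both merge. No matrix is ever used for the twisted diagram $D$.

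\emph{Your route.} You work entirely in the $\GFtwo$ model. Multilinearity in row $e_{0}$ gives the mod-$2$ additivity
$\dettwo\fim(D)[T\cup e_{0}]=\dettwo\fim(D_{0})[T\cup e_{0}]+\dettwo\fim(D_{0})[T]$.
The even rank of the alternating matrix $\fim(D_{0})$ then kills the cross term $ab$ that would otherwise block the lift to an integer identity.

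\emph{Trade-offs.} Your argument needs the framed Cohn--Lempel theorem (Theorem~\ref{theo:framedcl}) for a matrix with a nonzero diagonal entry. The paper's lemma is independent of it and uses the matrix model only for orientable diagrams, inside Theorem~C.

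In exchange, yours is purely algebraic and has two payoffs:
\begin{itemize}
\item It shows the disjointness of the two summands to be the same phenomenon as the orientable parity of Observation~\ref{obse:parity}, that is, the evenness exploited by Deng, Jin and Yan.
\item Your multilinearity step is exactly the $\GFtwo$ shadow of the rank-one identity $\det(B+uu^{\mathsf{T}})=\det B+u^{\mathsf{T}}\operatorname{adj}(B)\,u$. The paper uses that identity afterwards to match its sum with the single determinant of \cite{DJY24}.
\end{itemize}

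A small citation slip: the nonsingularity criterion for quasi-trees is Theorem~\ref{theo:framedcl} in Section~\ref{sec:gftwo}, not a result of Section~\ref{sec:determinants}.
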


The contributions have the following hierarchy.
Theorem~A, with its line corollary, is the paper's main new
result. Theorem~B is the structural bridge behind it: a statewise
A-trail specialisation of Traldi's circuit-partition
interpretation \cite{Tra11,Tra13}, whose additional ingredient is
the local choice of rotations making the two permitted transitions
at each vertex the two smooth ones. Theorems C and~D are short
recoveries, from the same $\GFtwo$ model, of known determinantal
results of \cite{MMN25} and \cite{DJY24} respectively; they
establish the tractable side of the contrast in the title. The
detailed relations to the literature --- the directed dictionary
and its determinant counts (Bouchet, Macris--Pul\'e, Lauri,
Merino), the twisted-dual analysis of Yan and Jin, and the
complexity map of Bl\"aser and Hoffmann --- are discussed where
they arise, in
Sections~\ref{sec:determinants}--\ref{sec:remarks}; to the
best of our knowledge, \#P-completeness at $(2,1)$ and
\#P-hardness at every rational point $(\xi,1)$ with $\xi\neq 1$
had not previously appeared.

\subsection{Reading guide}\label{subsec:overview}

This paper is largely self-contained, and assumes only basic
graph theory and linear algebra over $\GFtwo$; ribbon graphs are
introduced from scratch, and no prior acquaintance with interlace
polynomials is assumed beyond the definitions quoted. In
Section~\ref{sec:tracing} we define framed chord diagrams and
corner tracing, prove the tracing lemmas, and reduce general
ribbon graphs to diagrams. In Section~\ref{sec:gftwo} we convert
tracing into $\GFtwo$-linear algebra through the framed
Cohn--Lempel theorem, and identify the quasi-tree count as the
interlace-polynomial evaluation $\ip(\lcg;2,1)$. In
Section~\ref{sec:determinants} we prove the determinantal
Theorems C and~D. In Section~\ref{sec:atrails} we prove the
A-trail dictionary (Theorem~B) and the hardness Theorem~A, with
its consequence for the line $y=1$. Readers principally
interested in the complexity results may take
Theorem~\ref{theo:framedcl} as the boundary-counting input and
pass directly from Section~\ref{subsec:interlace} to
Section~\ref{sec:atrails}. Section~\ref{sec:remarks} closes
with remarks on the plane case, a conjecture delimiting the
bouquets whose quasi-trees a determinant can detect, and the
torsor structures that the theory of Baker, Ding and Kim
induces on that conjectural class.

\section{Ribbon graphs, chord diagrams, and boundary
tracing}\label{sec:tracing}

The purpose of this section is twofold: to define framed chord
diagrams and the corner tracing that computes their boundary
components, and to prove the small toolkit of tracing lemmas ---
alternation, letter tracking, even crossing, chord deletion, handle
attachment, and substitution --- on which the whole paper runs. We
close with the reduction from general ribbon graphs.

\subsection{Framed chord diagrams and corner
tracing}\label{subsec:diagrams}

Intuitively, a framed chord diagram presents a \emph{bouquet}: a
ribbon graph with one vertex disc and $n$ bands attached to its
boundary, the band $c$ carrying a half-twist exactly when its twist
bit $t_{c}$ is $1$. Flattening the vertex disc to a circle and
drawing each band as a chord gives the combinatorial object we work
with. Formally, a \emph{framed chord diagram} $D$ of size $n$ is a
partition of the \emph{position set} $\{0,1,\dotsc,2n-1\}$ of an
oriented circle into $n$ unordered pairs, the \emph{chords},
together with a \emph{twist bit} $t_{c}\in\{0,1\}$ for each chord
$c$. Two chords $c=\{a,b\}$ and $d=\{a',b'\}$ \emph{interlace} if
exactly one of $a',b'$ lies strictly between $a$ and $b$ on the
circle. When $n=0$ the diagram is empty; this degenerate case is
load-bearing below, not a curiosity. We call a chord subset
$S\subseteq E(D)$ a \emph{state}.

Boundary components are computed by \emph{corner tracing}.
For a state $S$, form the $2$-regular \emph{corner graph} on the
$4n$ \emph{corners} $\corner{p}{L},\corner{p}{R}$ for positions
$p$:
\begin{itemize}
\item \emph{arcs}: $\corner{p}{R}$---$\corner{p+1\bmod 2n}{L}$ for
  every position $p$;
\item \emph{band edges}, for each chord $c=\{a,b\}\in S$:
  $\corner{a}{L}$---$\corner{b}{R}$ and
  $\corner{a}{R}$---$\corner{b}{L}$ if $t_{c}=0$ (\emph{straight});
  $\corner{a}{L}$---$\corner{b}{L}$ and
  $\corner{a}{R}$---$\corner{b}{R}$ if $t_{c}=1$ (\emph{crossed});
\item \emph{pass edges}, for each chord $c=\{a,b\}\notin S$:
  $\corner{a}{L}$---$\corner{a}{R}$ and
  $\corner{b}{L}$---$\corner{b}{R}$.
\end{itemize}
The three wirings are shown in Figure~\ref{fig:wirings}; from this
point on we draw arcs dotted and band or pass edges solid.
The number of cycles of the corner graph is $\bc(S)$, the number of
\emph{boundary components} of the state; for the empty diagram we
set $\bc(\varnothing)\coloneqq 1$, the core circle being its own
boundary cycle. A state $S$ is a \emph{quasi-tree} if $\bc(S)=1$,
and the object of study is
\[
\qtn(D)\coloneqq\#\{S\subseteq E(D):\bc(S)=1\}.
\]

\begin{figure}
\centering
\begin{tikzpicture}[
  corner/.style={circle,fill,inner sep=1.1pt},
  wiring/.style={thick},
  arcedge/.style={densely dotted},
  lab/.style={font=\small},
  clab/.style={font=\scriptsize}]
\begin{scope}[shift={(0,0)}]
  \node[font=\small] at (1.1,1.9) {pass ($c\notin S$)};
  \node[corner,label=left:{\scriptsize $(a,\mathrm{L})$}]  (aL) at (0,1.2) {};
  \node[corner,label=right:{\scriptsize $(a,\mathrm{R})$}] (aR) at (2.2,1.2) {};
  \node[corner,label=left:{\scriptsize $(b,\mathrm{L})$}]  (bL) at (0,0) {};
  \node[corner,label=right:{\scriptsize $(b,\mathrm{R})$}] (bR) at (2.2,0) {};
  \draw[wiring] (aL) -- (aR);
  \draw[wiring] (bL) -- (bR);
  \draw[arcedge] (aL) -- ++(-0.45,0.3);
  \draw[arcedge] (aR) -- ++(0.45,0.3);
  \draw[arcedge] (bL) -- ++(-0.45,-0.3);
  \draw[arcedge] (bR) -- ++(0.45,-0.3);
\end{scope}
\begin{scope}[shift={(4.6,0)}]
  \node[font=\small] at (1.1,1.9) {straight ($c\in S$, $t_c=0$)};
  \node[corner,label=left:{\scriptsize $(a,\mathrm{L})$}]  (aL) at (0,1.2) {};
  \node[corner,label=right:{\scriptsize $(a,\mathrm{R})$}] (aR) at (2.2,1.2) {};
  \node[corner,label=left:{\scriptsize $(b,\mathrm{L})$}]  (bL) at (0,0) {};
  \node[corner,label=right:{\scriptsize $(b,\mathrm{R})$}] (bR) at (2.2,0) {};
  \draw[wiring] (aL) -- (bR);
  \draw[wiring] (aR) -- (bL);
  \draw[arcedge] (aL) -- ++(-0.45,0.3);
  \draw[arcedge] (aR) -- ++(0.45,0.3);
  \draw[arcedge] (bL) -- ++(-0.45,-0.3);
  \draw[arcedge] (bR) -- ++(0.45,-0.3);
\end{scope}
\begin{scope}[shift={(9.2,0)}]
  \node[font=\small] at (1.1,1.9) {crossed ($c\in S$, $t_c=1$)};
  \node[corner,label=left:{\scriptsize $(a,\mathrm{L})$}]  (aL) at (0,1.2) {};
  \node[corner,label=right:{\scriptsize $(a,\mathrm{R})$}] (aR) at (2.2,1.2) {};
  \node[corner,label=left:{\scriptsize $(b,\mathrm{L})$}]  (bL) at (0,0) {};
  \node[corner,label=right:{\scriptsize $(b,\mathrm{R})$}] (bR) at (2.2,0) {};
  \draw[wiring] (aL) -- (bL);
  \draw[wiring] (aR) -- (bR);
  \draw[arcedge] (aL) -- ++(-0.45,0.3);
  \draw[arcedge] (aR) -- ++(0.45,0.3);
  \draw[arcedge] (bL) -- ++(-0.45,-0.3);
  \draw[arcedge] (bR) -- ++(0.45,-0.3);
\end{scope}
\end{tikzpicture}
\caption{The three wirings at the two positions of a chord.}
\label{fig:wirings}
\end{figure}

The definition is best absorbed through an example, which will
recur throughout the paper. Let $D$ be the diagram of size $2$ with
chords $c_{0}=\{0,2\}$, twisted, and $c_{1}=\{1,3\}$, untwisted
(Figure~\ref{fig:example}). Tracing the four states by hand:
$\bc(\emptyset)=1$ (the four pass edges and four arcs form one
cycle, the core circle); $\bc(\{c_{0}\})=1$ (a M\"obius band has
one boundary curve); $\bc(\{c_{1}\})=2$ (an untwisted band splits
the boundary); and $\bc(\{c_{0},c_{1}\})=1$. The quasi-trees are
$\emptyset$, $\{c_{0}\}$ and $\{c_{0},c_{1}\}$, so $\qtn(D)=3$.

\begin{figure}
\centering
\begin{tikzpicture}[
  posn/.style={circle,fill,inner sep=1.2pt},
  lab/.style={font=\small},
  clab/.style={font=\scriptsize}]
\begin{scope}
  \draw (0,0) circle (1.3);
  \node[posn,label=above:{\small $0$}] (p0) at (0,1.3) {};
  \node[posn,label=right:{\small $1$}] (p1) at (1.3,0) {};
  \node[posn,label=below:{\small $2$}] (p2) at (0,-1.3) {};
  \node[posn,label=left:{\small $3$}]  (p3) at (-1.3,0) {};
  \draw[thick] (p0) -- (p2)
    node[pos=0.28,sloped,font=\scriptsize,allow upside down]{$\times$}
    node[pos=0.72,right=1pt,font=\scriptsize]{$c_0$};
  \draw[thick] (p1) -- (p3) node[pos=0.25,above=1pt,font=\scriptsize]{$c_1$};
\end{scope}
\begin{scope}[shift={(4.2,0)}]
  \node[posn,label=below:{\small $c_0$}] (v0) at (0,0) {};
  \node[posn,label=below:{\small $c_1$}] (v1) at (1.6,0) {};
  \draw[thick] (v0) -- (v1);
  \draw[thick] (v0) to[out=120,in=60,looseness=18] (v0);
  \node[font=\scriptsize] at (0.8,-0.85) {$H$: loop at $c_0$ (twist), edge (interlacement)};
\end{scope}
\end{tikzpicture}
\caption{The running example and its looped circle graph.}
\label{fig:example}
\end{figure}

The \emph{Euler genus} of $D$ is
$\eg(D)\coloneqq n+1-\bc(E(D))$, and $D$ is \emph{orientable} if
every twist bit is $0$. We shall see in
Corollary~\ref{coro:genusrank} that $\eg(D)$ is a matrix rank.

\subsection{The tracing lemmas}\label{subsec:tracinglemmas}

Each lemma of this subsection does one job. Throughout, a
\emph{walk} in a corner graph traverses its edges, and a
\emph{segment} is a walk obtained from a cycle by removing edges.

\begin{lemm}[alternation and parity]\label{lemm:alternation}
Every corner is incident with exactly one arc and one non-arc edge.
Consequently every cycle of every corner graph alternates arcs with
non-arc edges and has even length, and every open segment obtained
by removing two non-arc edges from cycles begins and ends with arcs
and has odd length.
\end{lemm}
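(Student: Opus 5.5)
The plan is to read off the incidence structure directly from the three wirings, then deduce the cycle and segment claims by parity.

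\emph{Incidence.} Fix a corner $\corner{p}{L}$. Among the arcs, the only one meeting it is $\corner{p-1\bmod 2n}{R}$---$\corner{p}{L}$, because arcs join each $R$-corner at $p$ to the $L$-corner at $p+1$. Symmetrically, $\corner{p}{R}$ meets exactly the arc to $\corner{p+1\bmod 2n}{L}$. For non-arc edges, position $p$ lies in exactly one chord $c=\{a,b\}$. Only the edges contributed by $c$ can touch corners at $p$, and $c$ contributes exactly one of the three wirings of Figure~\ref{fig:wirings}. In each wiring the four corners $\corner{a}{L},\corner{a}{R},\corner{b}{L},\corner{b}{R}$ are perfectly matched by two edges. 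This is checked case by case: pass edges match $aL$--$aR$ and $bL$--$bR$; straight edges match $aL$--$bR$ and $aR$--$bL$; crossed edges match $aL$--$bL$ and $aR$--$bR$. So each corner meets exactly one non-arc edge.

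There is one degenerate point to watch. When $n=1$, the arc $\corner{1}{R}$---$\corner{0}{L}$ and the arc $\corner{0}{R}$---$\corner{1}{L}$ are distinct, so no corner meets two arcs and no loop or multi-edge collapses the count. In every case the corner graph is $2$-regular, with a perfect matching of arcs and a complementary perfect matching of non-arc edges. The empty diagram has no corners, and its boundary cycle is defined by convention, so the lemma concerns $n\ge1$.

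\emph{Alternation and parity.} In a cycle, each vertex has its two cycle edges equal to its two incident edges, one arc and one non-arc. So consecutive edges along the cycle alternate in type. A closed alternating sequence must have even length, since the types must match up on returning to the start. This holds even for a $2$-cycle, which is an arc and a non-arc edge with the same endpoints.

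\emph{Segments.} Deleting two non-arc edges from the cycles leaves paths, each still alternating. An end vertex of such a path has lost its non-arc edge, so the path's terminal edges are arcs. An alternating path that begins and ends with arcs has odd length.

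One remark is needed if the two deleted edges lie on the same cycle and are adjacent. They cannot be, since two non-arc edges are never consecutive. So every segment is nonempty and begins with an arc. If the two deleted edges lie on different cycles, each resulting segment is a single open path with the same property.

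The only real obstacle is bookkeeping: making sure arcs and chord wirings never share corners wrongly. That reduces to the per-chord check above.
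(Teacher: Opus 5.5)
Your proof is correct and takes the same approach as the paper. The paper simply reads off that $\corner{p}{L}$ and $\corner{p}{R}$ each meet exactly one arc and one pass or band edge, and treats the alternation and parity claims as immediate. Your extra bookkeeping is accurate and only makes explicit what the paper leaves implicit: the size-one case, the $2$-cycles formed when a straight chord on cyclically adjacent positions puts a band edge parallel to an arc, and the fact that two deleted non-arc edges are never adjacent on a cycle.
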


\begin{proof}
The corner $\corner{p}{L}$ meets the arc from $\corner{p-1}{R}$ and
the unique pass or band edge at its position; the corner
$\corner{p}{R}$ meets the arc to $\corner{p+1}{L}$ and its pass or
band edge.
\end{proof}

\begin{lemm}[letter tracking]\label{lemm:letters}
Assign to the corner $\corner{p}{s}$ the letter
$s\in\{\mathrm{L},\mathrm{R}\}$. Arcs, pass edges and straight band
edges join corners with opposite letters; crossed band edges join
corners with equal letters. Consequently, a walk segment $\gamma$
starts and ends at equal letters if and only if
$|\gamma|+\crx{\gamma}$ is even, where $\crx{\gamma}$ is the number
of crossed edges traversed.
\end{lemm}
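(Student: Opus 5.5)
The plan is to check the letter behaviour of each edge type directly from the definitions, and then to get the parity statement by counting letter flips along the walk.

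\emph{Edge types.} Arcs join $\corner{p}{R}$ to $\corner{p+1}{L}$, so their letters are R and L, which are opposite. Pass edges join $\corner{a}{L}$ to $\corner{a}{R}$, again opposite letters. Straight band edges join $\corner{a}{L}$ to $\corner{b}{R}$ and $\corner{a}{R}$ to $\corner{b}{L}$, which is L--R in both cases. Crossed band edges join $\corner{a}{L}$ to $\corner{b}{L}$ and $\corner{a}{R}$ to $\corner{b}{R}$, so they keep the letter. This settles the first sentence of Lemma~\ref{lemm:letters}; it is a direct reading of the three wirings in Figure~\ref{fig:wirings}, together with the definition of arcs.

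\emph{Parity.} Let $\gamma$ traverse the edges $e_{1},\dotsc,e_{k}$ in order, with $k=|\gamma|$. Traversing an edge changes the current letter exactly when the edge is not crossed. Hence the letter flips $k-\crx{\gamma}$ times along $\gamma$. The two endpoints carry equal letters if and only if this number of flips is even. Since $k-\crx{\gamma}\equiv k+\crx{\gamma}\pmod 2$, this is the stated criterion. The count should be taken with multiplicity: if $\gamma$ traverses the same edge more than once, as a closed cycle read as a walk can do, each traversal counts separately. This convention agrees with the meaning of $|\gamma|$ and $\crx{\gamma}$ as counts of traversed edges.

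There is no real obstacle. The only points needing care are to state the multiplicity convention for $|\gamma|$ and $\crx{\gamma}$ explicitly, and to note that the case $k=0$ (a single corner) trivially has equal letters and even sum.
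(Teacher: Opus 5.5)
Your proposal is correct and is essentially the paper's argument: the paper proves the lemma by inspecting the wirings of Figure~\ref{fig:wirings} together with the arcs, and says the parity statement ``follows edge by edge.'' Your flip count, with $|\gamma|-\crx{\gamma}\equiv|\gamma|+\crx{\gamma}\pmod 2$, spells out exactly that edge-by-edge step.
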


\begin{proof}
Inspection of the wirings in Figure~\ref{fig:wirings}; the parity
statement follows edge by edge.
\end{proof}

\begin{lemm}[even crossing]\label{lemm:evencrossing}
Every cycle of every corner graph traverses an even number of
crossed edges.
\end{lemm}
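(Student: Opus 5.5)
We apply the letter-tracking Lemma~\ref{lemm:letters} to a closed walk. Let $\gamma$ be a cycle of the corner graph, and regard it as a closed walk starting and ending at the same corner $\corner{p}{s}$. The start and end letters are then trivially equal. Lemma~\ref{lemm:letters} applies equally to closed segments: its parity statement is proved edge by edge, each edge either flipping the letter or preserving it. It therefore gives that $|\gamma|+\crx{\gamma}$ is even.

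The second input is the alternation Lemma~\ref{lemm:alternation}. Every cycle alternates arcs with non-arc edges, so $|\gamma|$ is even. Combining the two parities, $\crx{\gamma}$ is even, which is the claim.

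The only point needing care is that Lemma~\ref{lemm:letters} was phrased for walk segments. To avoid any ambiguity, we can rerun the one-line argument directly for the cycle. Going once around $\gamma$, the letter changes exactly at the non-crossed edges. Returning to the starting corner forces an even number of letter changes, so $|\gamma|-\crx{\gamma}$ is even. Together with the evenness of $|\gamma|$ from Lemma~\ref{lemm:alternation}, this again gives that $\crx{\gamma}$ is even.

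Degenerate cases need no separate treatment. A cycle containing no band edges has $\crx{\gamma}=0$. The empty diagram has no corner graph, so the statement is vacuous there; the convention $\bc(\varnothing)=1$ does not involve any cycle of a corner graph.

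I expect no real obstacle. The only subtlety is making explicit that the parity count in Lemma~\ref{lemm:letters} extends verbatim from open segments to closed walks.
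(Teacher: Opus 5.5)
Your proof is correct and is essentially the paper's own argument. The paper likewise takes evenness of $|\Gamma|$ from the alternation lemma and applies letter tracking to the cycle as a closed walk that returns to its starting letter, so that $\crx{\Gamma}$ must be even; your explicit check that the edge-by-edge parity count covers closed walks makes precise a step the paper takes silently.
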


\begin{proof}
A cycle $\Gamma$ has even length by Lemma~\ref{lemm:alternation}
and returns to its starting corner, hence to its starting letter;
by Lemma~\ref{lemm:letters}, $|\Gamma|+\crx{\Gamma}$ is even, so
$\crx{\Gamma}$ is even.
\end{proof}

\begin{lemm}[chord deletion]\label{lemm:deletion}
Let $c$ be a chord of $D$ and let $D-c$ be the diagram obtained by
deleting $c$ and its two positions. Then $\bc_{D}(S)=\bc_{D-c}(S)$
for every state $S\subseteq E(D)\setminus\{c\}$.
\end{lemm}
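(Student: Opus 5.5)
Since $c\notin S$, the chord $c=\{a,b\}$ contributes only its two pass edges $\corner{a}{L}$---$\corner{a}{R}$ and $\corner{b}{L}$---$\corner{b}{R}$ to the corner graph of $D$. The plan is to show that the corner graph of $D-c$ is obtained from that of $D$ by suppressing these two short stretches. The cycle structure is then visibly unchanged, apart from the degenerate case where the result is the empty diagram.

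First I fix the relabelling. Deleting positions $a,b$ and renumbering the remaining $2n-2$ positions in cyclic order gives the position set of $D-c$. Chords other than $c$ keep their endpoints, twist bits and membership in $S$, so their band and pass edges are literally the same edges under the renumbering. The only changes are near $a$ and $b$. In $D$ there is the path
\[
\corner{a-1}{R}\;\text{---}\;\corner{a}{L}\;\text{---}\;\corner{a}{R}\;\text{---}\;\corner{a+1}{L},
\]
consisting of arc, pass edge, arc. In $D-c$ this path is replaced by the single arc $\corner{a-1}{R}$---$\corner{a+1}{L}$ between the neighbouring positions. The same holds at $b$.

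The corners $\corner{a}{L},\corner{a}{R}$ have degree $2$ and lie only on that path, by Lemma~\ref{lemm:alternation}. So replacing the path by one edge is the subdivision-inverse of a $2$-regular graph: it maps cycles bijectively to cycles. Doing this at $a$ and then at $b$ gives $\bc_{D}(S)=\bc_{D-c}(S)$.

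The care points are adjacency and degeneracy. If $b=a+1$, the two paths overlap: the arc $\corner{a}{R}$---$\corner{b}{L}$ is shared. The combined path from $\corner{a-1}{R}$ to $\corner{b+1}{L}$ is still a path through degree-$2$ internal corners, and it contracts to one arc, so the argument is unchanged.

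If $n=1$, then $S=\varnothing$ and $D-c$ is the empty diagram. Here the corner graph of $D$ is one cycle: arc, pass, arc, pass, which gives $\bc=1$. This matches the convention $\bc(\varnothing)=1$, and the convention is exactly what makes this case work.

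I expect no real obstacle. The only step needing attention is checking that the contraction is well defined when positions $a,b$ are cyclically adjacent or $n=1$, and that the renumbering preserves every other wiring.
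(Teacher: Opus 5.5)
Your proof is correct and takes essentially the same route as the paper. Both arguments treat the sole-chord case via the convention $\bc(\varnothing)=1$, and otherwise suppress the degree-two corners at the positions of $c$: each arc--pass--arc stretch concatenates into one arc of $D-c$, three arcs become one when the positions are cyclically adjacent, and this neither merges nor splits any cycle.
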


\begin{proof}
Suppose first that $c$ is the sole chord. Then $D-c$ is the empty
diagram, with $\bc(\varnothing)=1$ by convention, while the corner
graph of $S=\emptyset$ in $D$ is the single cycle alternating the
two pass edges of $c$ with the two arcs; both sides equal $1$.

Otherwise, in the corner graph of $S$ in $D$ the four corners at
the positions of $c$ each have degree two, met by one arc and one
pass edge (Lemma~\ref{lemm:alternation}). Delete the two pass
edges of $c$ together with those four corners, and concatenate the
arcs incident with each deleted position into one edge. The result
is the corner graph of $S$ in $D-c$: the concatenations produce
exactly the arcs of $D-c$ between the surviving positions ---
including the case of cyclically adjacent positions of $c$, where
three arcs concatenate into one --- and every band and pass edge
of the chords of $D-c$ is untouched. Suppressing degree-two
vertices in this way neither merges nor splits any cycle.
\end{proof}

The chord-deletion lemma is used silently whenever we write
$\qtn(D-c)$ for the number of quasi-trees of $D$ avoiding $c$; we
shall not remark on it again.

\begin{lemm}[handle attachment]\label{lemm:handle}
Let $S$ be a state and $c\notin S$ a chord with pass edges at
positions $a$ and $b$.
\begin{enumerate}
\item If the two pass edges lie on different cycles of the corner
  graph, then $\bc(S\cup c)=\bc(S)-1$, for either twist bit.
\item If they lie on one cycle $\Gamma$, let $\gamma$ be either
  segment of $\Gamma$ cut at the two pass edges. Then
  $\bc(S\cup c)=\bc(S)+1$ if $\crx{\gamma}+t_{c}$ is even, and
  $\bc(S\cup c)=\bc(S)$ if $\crx{\gamma}+t_{c}$ is odd; the
  criterion does not depend on the choice of segment, since
  $\crx{\Gamma}$ is even (Lemma~\ref{lemm:evencrossing}).
\end{enumerate}
\end{lemm}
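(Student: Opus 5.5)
Going from $S$ to $S\cup c$ changes the corner graph only locally: we remove the two pass edges $e_a=\corner{a}{L}$---$\corner{a}{R}$ and $e_b=\corner{b}{L}$---$\corner{b}{R}$ and insert the two band edges of $c$. Every other edge is unchanged, so every cycle avoiding $e_a$ and $e_b$ survives intact. The plan is to cut the cycles through $e_a,e_b$ into open segments and see how the band edges reconnect the four free endpoints $\corner{a}{L},\corner{a}{R},\corner{b}{L},\corner{b}{R}$.

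For part 1, say $e_a\in\Gamma_1$ and $e_b\in\Gamma_2$ with $\Gamma_1\neq\Gamma_2$. Removing $e_a$ leaves an open path $P_1$ from $\corner{a}{L}$ to $\corner{a}{R}$, and removing $e_b$ leaves a path $P_2$ from $\corner{b}{L}$ to $\corner{b}{R}$. Each band edge has one end in $\{\corner{a}{L},\corner{a}{R}\}$ and the other in $\{\corner{b}{L},\corner{b}{R}\}$, for either twist bit. Both wirings therefore join $P_1$ and $P_2$ into a single cycle, so two cycles become one and $\bc$ drops by $1$.

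For part 2, removing $e_a,e_b$ from $\Gamma$ leaves two open segments $\gamma,\gamma'$. By Lemma~\ref{lemm:alternation}, each segment joins an endpoint at $a$ to an endpoint at $b$: removing the non-arc edge at a corner leaves its arc as the segment's end. So the four free endpoints are paired as $\gamma$ running from $\corner{a}{x}$ to $\corner{b}{y}$, with $\gamma'$ joining the complementary letters $\corner{a}{\bar x}$ and $\corner{b}{\bar y}$. The two possible reconnections behave as follows.

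\begin{itemize}
\item If the band edges are exactly $\corner{a}{x}$---$\corner{b}{y}$ and $\corner{a}{\bar x}$---$\corner{b}{\bar y}$, each segment closes on itself, giving two cycles, so $\bc$ rises by $1$.
\item If the band edges are the other matching, $\corner{a}{x}$---$\corner{b}{\bar y}$ and $\corner{a}{\bar x}$---$\corner{b}{y}$, the two segments join into one cycle, so $\bc$ is unchanged.
\end{itemize}

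A straight band edge joins opposite letters and a crossed one joins equal letters. Hence we are in the first case exactly when [$x\neq y$ and $t_c=0$] or [$x=y$ and $t_c=1$].

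It remains to express $x=y$ by a parity, using Lemma~\ref{lemm:letters}. The segment $\gamma$ has odd length by Lemma~\ref{lemm:alternation}. So its endpoints carry equal letters iff $|\gamma|+\crx{\gamma}$ is even, that is, iff $\crx{\gamma}$ is odd. Substituting, the first case (split) occurs iff $\crx{\gamma}+t_c$ is even, and the second (merge) iff it is odd, as claimed. Independence of the choice of segment follows from $\crx{\gamma}+\crx{\gamma'}=\crx{\Gamma}$, which is even by Lemma~\ref{lemm:evencrossing}.

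The main obstacle is the bookkeeping that each segment really runs from an $a$-corner to a $b$-corner, so that the segments are not, for instance, one from $\corner{a}{L}$ to $\corner{a}{R}$. This follows because removing $e_a$ from a cycle leaves a path whose two ends are the two $a$-corners, and a second removal splits that path between them.
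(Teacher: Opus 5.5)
Your proof is correct and follows the paper's argument essentially step for step: you cut the affected cycle(s) at the two pass edges, use alternation for the odd length of $\gamma$ and letter tracking to get equal end letters exactly when $\crx{\gamma}$ is odd, and let the straight or crossed wiring decide whether each segment closes on itself or the two segments are joined. The only blemish is in attribution: the fact that each segment runs from an $a$-corner to a $b$-corner comes from your closing remark about removing $e_a$ and then $e_b$, not from Lemma~\ref{lemm:alternation}, which supplies only the arc ends and the odd length.
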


\begin{proof}
Adjoining $c$ replaces its two pass edges by two band edges on the
same four corners
$\corner{a}{L},\corner{a}{R},\corner{b}{L},\corner{b}{R}$, so only
the cycles through those corners change.

(1) Cutting each of two cycles once leaves two open segments.
Either wiring uses one new edge from each $a$-side corner to a
$b$-side corner, so any traversal alternates the two segments: one
cycle results.

(2) Cutting $\Gamma$ at the two pass edges leaves segments $\gamma$
and $\gamma'$ whose endpoint pairs are
$\{\corner{a}{X},\corner{b}{Y}\}$ and
$\{\corner{a}{\overline{X}},\corner{b}{\overline{Y}}\}$ for some
letters $X,Y$, writing $\overline{\mathrm{L}}=\mathrm{R}$ and
$\overline{\mathrm{R}}=\mathrm{L}$. By Lemma~\ref{lemm:alternation} the segment $\gamma$ begins
and ends with arcs and has odd length, so by
Lemma~\ref{lemm:letters}, $X=Y$ if and only if $\crx{\gamma}$ is
odd. The straight wiring joins opposite letters; hence $\gamma$
closes onto itself precisely if $X\neq Y$, that is, if
$\crx{\gamma}$ is even, and then $\gamma'$ closes too (its
endpoints carry the complementary letters) and $\bc$ increases by
one; otherwise each new edge joins $\gamma$ to $\gamma'$ and $\bc$
is unchanged. The crossed wiring joins equal letters and
interchanges the two outcomes.
\end{proof}

\subsection{Ribbon graphs, substitution, and the
reduction}\label{subsec:reduction}

The same tracing computes boundary components of an arbitrary
ribbon graph $G$: a collection of vertex discs, each with cyclically
ordered attachment positions on its boundary, and bands joining
pairs of positions, each band carrying a twist bit. The twist bits
of a multi-vertex ribbon graph are relative data: choose an
orientation of every vertex disc, and each band is straight or
crossed with respect to those choices. Reversing a disc toggles
the bit of every band with exactly one attachment on that disc,
while a loop band based there, both of whose attachments reverse,
is unchanged; the resulting signed rotation system presents the
same ribbon graph. The corner
graph of a spanning-subgraph state has arcs running along each disc
boundary between consecutive positions, band edges for the bands of
the state (straight or crossed by the twist), and pass edges at the
attachments of absent bands; an isolated disc --- one with no
positions --- contributes one cycle, its whole boundary, in
accordance with the convention $\bc(\varnothing)=1$ above. To
police the vocabulary: ribbon graphs have \emph{bands}, diagrams
have \emph{chords}, and in both cases a \emph{state} is a spanning
selection; a bouquet is a ribbon graph with one disc, and its
diagram presentation is as above. Ribbon graphs are henceforth
connected unless stated otherwise. This costs no generality: no
spanning state of a disconnected ribbon graph has a single
boundary component, and under the componentwise convention common
in the literature (a quasi-tree in each component; see, e.g.,
\cite{DK26}) the counts multiply over components, so disconnected
inputs reduce to connected ones. The lemmas of
Section~\ref{subsec:tracinglemmas} extend, with the same proofs,
to states of ribbon graphs, the isolated-disc convention covering
a deletion that removes the last attachment position of a disc.

\begin{lemm}[substitution]\label{lemm:substitution}
Let $G$ be a ribbon graph and $S_{0}$ a state with $\bc(S_{0})=1$.
There is a framed chord diagram $D'$, computable in polynomial time
from $G$ and $S_{0}$, whose chords are the bands of $G$ and which
satisfies
\[
\bc_{D'}(U)=\bc_{G}(S_{0}\mathbin{\triangle}U)
\qquad\text{for every }U\subseteq E(G).
\]
In particular $\qtn(D')=\qtn(G)$.
\end{lemm}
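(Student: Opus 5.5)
The idea is to read the single boundary cycle of $S_{0}$ as the core circle of a new bouquet. By hypothesis the corner graph of $S_{0}$ in $G$ is one cycle $\Gamma_{0}$, and by Lemma~\ref{lemm:alternation} it alternates arcs with non-arc edges. Each band $e$ of $G$ contributes exactly two non-arc edges to $\Gamma_{0}$: its two band edges if $e\in S_{0}$, and its two pass edges if $e\notin S_{0}$.

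I would build $D'$ as follows. Traverse $\Gamma_{0}$ once in a chosen direction. Let the non-arc edges, in order of traversal, be the positions $0,1,\dotsc,2n-1$, where $n=|E(G)|$. Let the arcs of $\Gamma_{0}$ be the arcs of $D'$. Then the chord of $D'$ attached to $e$ is the pair of positions of the two non-arc edges belonging to $e$. Each position $p$ of $D'$ is a non-arc edge of $\Gamma_{0}$, traversed from one corner to another. Label its first-met corner $\corner{p}{L}$ and its second $\corner{p}{R}$. This matches the arc convention $\corner{p}{R}$---$\corner{p+1}{L}$.

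For $U=\emptyset$ the chord-$e$ corners in $D'$ are then joined by pass edges, exactly as $\Gamma_{0}$ joins them, and $\bc_{D'}(\emptyset)=1=\bc_{G}(S_{0})$. Toggling a band $e$ in $G$ re-pairs the same four corners with the other wiring: it swaps pass edges for band edges or conversely. The corresponding move in $D'$ swaps the pass edges of $e$ for its band edges. The twist bit $t'_{e}$ must be chosen so that these two re-pairings coincide, as matchings on the four corners, for every $U$.

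The step I expect to be the main obstacle is showing that $t'_{e}$ can be chosen at all. There are three perfect matchings on four corners. The $D'$-pass matching equals the $G$-wiring of $e$ in $S_{0}$, so the $G$-wiring of $e$ in $S_{0}\mathbin{\triangle}\{e\}$ is one of the two remaining matchings. Both remaining matchings are realized in $D'$, by $t'=0$ (straight, $\corner{a}{L}$---$\corner{b}{R}$) and $t'=1$ (crossed). Hence exactly one twist bit works, and it is computed by inspecting the letters.

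Locality then finishes the argument. The corner graph of $U$ in $D'$ and that of $S_{0}\mathbin{\triangle}U$ in $G$ have the same arcs. For each band $e$, both graphs carry the same matching on $e$'s four corners: the $S_{0}$-wiring if $e\notin U$, and the toggled wiring if $e\in U$. Since $t'_{e}$ depends only on $e$ and not on $U$, the two corner graphs are identical, so $\bc_{D'}(U)=\bc_{G}(S_{0}\mathbin{\triangle}U)$.

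One point needs checking: the corners at the positions of a chord of $D'$ must be exactly the four corners of $e$ in $G$. This holds because every corner lies on exactly one non-arc edge. Traversing $\Gamma_{0}$ and reading off letters is linear time, so $D'$ is computable in polynomial time.

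Finally, $U\mapsto S_{0}\mathbin{\triangle}U$ is a bijection on $2^{E(G)}$, which gives $\qtn(D')=\qtn(G)$.
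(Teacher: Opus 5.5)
Your proof is correct and follows the paper's approach: the circle is the boundary cycle of $S_{0}$, the positions are its non-arc edges, and the disjoint four-corner rewirings for the bands agree, so the two corner graphs coincide. The one difference is how you fix the twist bit. You choose $t'_{e}$ by comparing perfect matchings on the four corners, whereas the paper defines $t'_{e}=1$ exactly when $\bc_{G}(S_{0}\mathbin{\triangle}\{e\})=1$ and tells the two rewirings apart by their cycle counts. The two choices agree, since by Lemma~\ref{lemm:handle}(2) a lone chord of $D'$ gives one boundary component exactly when it is twisted, and it is this calibration form that the proof of Theorem~C later invokes.
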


\begin{proof}
The circle of $D'$ is the single boundary cycle of $S_{0}$, with
either of its two orientations chosen to orient the core circle
--- the other choice reflects the diagram; the
chord of a band $e$ is attached at the two places where that cycle
traverses $e$'s data --- its two pass edges if $e\notin S_{0}$, its
two band edges if $e\in S_{0}$. Formally, the boundary cycle
traverses exactly $2|E(G)|$ non-arc edges, two for each band;
these traversals, in cyclic order, are the positions of $D'$, and
the boundary segments between consecutive ones are its arcs. The
twist bit of the chord is calibrated by
\[
t'_{e}\coloneqq
\begin{cases}
1 & \text{if } \bc_{G}(S_{0}\mathbin{\triangle}\{e\})=1,\\
0 & \text{otherwise.}
\end{cases}
\]
Fix a band $e$. In the corner graph of $S_{0}$ it is represented by
two designated edges; toggling its membership replaces exactly
those two edges by two edges on the same four endpoints, in one of
the two possible ways --- straight or crossed relative to the
cycle --- and which way is a fixed local wiring datum, independent
of the rest of the state. In $D'$, toggling the chord of $e$
replaces its two pass edges on the circle --- the same four
endpoints, under the identification of the circle of $D'$ with the
boundary cycle of $S_{0}$ --- again in one of the two ways, fixed
by $t'_{e}$. The two wirings are distinguished by the single
calibration state: rewiring one cycle in the two possible ways
yields one cycle and two cycles respectively, being the two splices
of a cut cycle as in the proof of Lemma~\ref{lemm:handle}(2).
Since $t'_{e}$ is defined by that calibration, the wirings agree
for every band; the replacements for distinct bands act on
disjoint quadruples of corners, so they may be performed
simultaneously, and for every $U$ the corner graph of $U$ in $D'$
is canonically isomorphic --- after suppressing the degree-two
subdivisions introduced by reading boundary segments as core arcs
--- to the corner graph of $S_{0}\mathbin{\triangle}U$ in $G$.
The count is preserved because $U\mapsto S_{0}\mathbin{\triangle}U$
is a bijection on states preserving $\bc$.
\end{proof}

We show the construction in Figure~\ref{fig:substitution}, on the
smallest interesting input: the doubled edge, reduced at a
spanning tree, yields the untwisted crossing pair --- an example
to which we return after Corollary~\ref{coro:genusrank}. The
construction is the boundary-walk description of the partial
dual $G^{S_{0}}$, presented as a bouquet (see \cite{Chm09,EMM13});
we shall not use this identification, but it explains the company
the lemma keeps.

\begin{figure}
\centering
\begin{tikzpicture}[
  every node/.style={font=\scriptsize},
  band/.style={thick},
  treeband/.style={very thick}]
\begin{scope}
  \draw[thick] (0,0) circle (0.55);
  \draw[thick] (3.0,0) circle (0.55);
  \draw[treeband] (0.42,0.36) -- (2.58,0.36);
  \draw[treeband] (0.53,0.12) -- (2.47,0.12);
  \node at (1.5,0.58) {$e_{1}\in S_{0}$};
  \draw[band] (0.53,-0.12) -- (2.47,-0.12);
  \draw[band] (0.42,-0.36) -- (2.58,-0.36);
  \node at (1.5,-0.62) {$e_{2}$};
\end{scope}
\begin{scope}[shift={(6.2,0)}]
  \draw (0,0) circle (0.85);
  \foreach \ang/\lab in {90/{e_{1}},0/{e_{2}},270/{e_{1}},180/{e_{2}}}{
    \fill (\ang:0.85) circle (1.1pt);
    \node at (\ang:1.12) {$\hat{\lab}$};}
  \draw[thick] (90:0.85) -- (270:0.85);
  \draw[thick] (0:0.85) -- (180:0.85);
\end{scope}
\node at (4.55,0) {$\longrightarrow$};
\end{tikzpicture}
\caption{Reducing the doubled edge at a spanning tree.}
\label{fig:substitution}
\end{figure}

\begin{lemm}[reduction]\label{lemm:reduction}
Every connected ribbon graph $G$ has, computably in polynomial
time, a framed chord diagram $D'$ with $\qtn(D')=\qtn(G)$.
\end{lemm}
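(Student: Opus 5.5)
The plan is to feed Lemma~\ref{lemm:substitution} a state $S_{0}$ with $\bc_{G}(S_{0})=1$, computed in polynomial time. The substitution lemma then produces $D'$ with $\qtn(D')=\qtn(G)$ directly. So everything reduces to exhibiting one quasi-tree of a connected ribbon graph efficiently, and the natural candidate is a spanning tree.

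First, take $S_{0}=T$, the band set of a spanning tree of the underlying graph of $G$, found by breadth-first search. This is where connectivity is used.

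Second, I check $\bc(T)=1$ by induction on $|T|$, using the handle-attachment Lemma~\ref{lemm:handle}(1) in its ribbon-graph form, which Section~\ref{subsec:reduction} asserts holds with the same proof. Start from the empty state. Since every band is absent, each disc's boundary is traced as a single cycle: the disc's arcs alternate with the pass edges at its positions, and an isolated disc contributes one cycle by convention. Hence $\bc(\emptyset)=|V(G)|$, with exactly one cycle per disc.

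Now add the tree bands one at a time, in an order in which each new band joins two distinct components of the current forest. I maintain the invariant that the cycles of the corner graph correspond bijectively to the components of the current forest. This holds at the start. When a band $e$ joins components $K_{1}\neq K_{2}$, its two pass edges lie on the cycles of $K_{1}$ and $K_{2}$, which are different cycles. By Lemma~\ref{lemm:handle}(1), $\bc$ then drops by one for either twist bit, and the merged cycle is the cycle of $K_{1}\cup K_{2}$, so the invariant is preserved. After $|V(G)|-1$ additions we obtain $\bc(T)=1$.

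Third, the construction of $D'$ is polynomial: tracing the single boundary cycle of $T$ takes time linear in $|E(G)|$. Computing each calibration bit $t'_{e}$ takes one further trace per band. The step needing the most care is the invariant in the second step, namely that each forest component carries exactly one boundary cycle. It must hold even when some discs have positions belonging to non-tree bands, and those appear simply as pass edges on the disc's own cycle. This is exactly what the induction maintains, so I expect no real obstacle. Degenerate cases are also covered: a single disc with no bands gives $T=\emptyset$ and $\bc=1$, and then $D'$ is the empty diagram.
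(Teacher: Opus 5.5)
Your proposal is correct and is essentially the paper's proof: take a spanning tree $T$, show $\bc(T)=1$ by repeated application of Lemma~\ref{lemm:handle}(1), and apply Lemma~\ref{lemm:substitution} with $S_{0}=T$. The only difference is cosmetic: you add the tree bands in any order and maintain a one-cycle-per-forest-component invariant, whereas the paper grows a single component, with each new band joining a fresh disc's boundary cycle to the growing one.
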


\begin{proof}
A spanning tree $T$ of $G$ is a state with $\bc(T)=1$:
adding its bands one by one, each addition joins a fresh disc's
boundary cycle to the cycle of the component being grown, which is
case~(1) of Lemma~\ref{lemm:handle}. Apply
Lemma~\ref{lemm:substitution} with $S_{0}=T$.
\end{proof}

It is worth stating exactly what the reduction preserves. The
substitution lemma preserves every boundary-component count after
the relabelling $U\mapsto T\mathbin{\triangle}U$ --- that is, the
whole quasi-tree set system up to symmetric difference, the
ribbon-graphic delta-matroid up to twists \cite{CMNR19} --- and it
preserves connectedness and orientability: we shall prove the
latter directly, by a parity argument, in
Observation~\ref{obse:parity}, and it is also an instance of the
orientability-preservation of partial duality
\cite[\S1.8, Lemma~(d)]{Chm09}. It need
not preserve the Euler genus, the number of vertices, or the number
and placement of twisted chords; in particular it does not carry
``exactly one non-orientable loop'' to ``exactly one twisted
chord'', which is why Theorem~D is a statement about bouquets. A
quantitative example of the genus increase follows
Corollary~\ref{coro:genusrank}.

For the remainder of Sections \ref{sec:gftwo}--\ref{sec:atrails},
$D$ denotes a framed chord diagram of size $n$.

\section{The \texorpdfstring{$\GFtwo$}{GF(2)} model and the
interlace polynomial}\label{sec:gftwo}

This section converts corner tracing into $\GFtwo$-linear
algebra, through the framed Cohn--Lempel theorem, and then places
the quasi-tree count on the complexity map of the interlace
polynomial.

\subsection{The framed interlacement matrix}\label{subsec:fim}

Subscripts $2$ on rank, nullity and determinant indicate
computation over $\GFtwo$. The \emph{framed interlacement
matrix} of $D$ is the matrix
$\fim\in\GFtwo^{n\times n}$ with $\fim_{cc}=t_{c}$ and, for
$c\neq d$, $\fim_{cd}=1$ exactly when $c$ and $d$ interlace.
Equivalently, $\fim$ is the adjacency matrix over $\GFtwo$ of the
\emph{looped circle graph} $\lcg=\lcg(D)$: the graph whose vertices
are the chords, whose edges are the interlacements, and whose loops
mark the twisted chords. For the running example,
\[
\fim=\begin{pmatrix}1&1\\1&0\end{pmatrix},
\]
the loop at $c_{0}$ providing the diagonal $1$
(Figure~\ref{fig:example}). For a state $S$ we write $\fim[S]$ for
the principal submatrix on the rows and columns of $S$, with
$\fim[\emptyset]$ the empty matrix, whose determinant is $1$ by
convention --- a convention that matches
$\bc(\varnothing)=1$ exactly.

\subsection{The framed Cohn--Lempel theorem}\label{subsec:framedcl}

The theorem we need is due, in the form below, to Traldi
\cite[Thm~4]{Tra11}, extending the classical cycle-counting
equality of Cohn and Lempel \cite[Thm~1$'$]{CL72} and the trip
matrix of Zulli \cite[Thm~2(b)(i)]{Zul95}; the framed statement
also appears, in knot-theoretic garb, in \cite{TZ09}. Traldi's
setting is that of circuit partitions of $4$-regular graphs, and
the translation to framed chord diagrams is the following lemma,
which we shall meet again, in refined form, in
Section~\ref{sec:atrails}.

Given a nonempty diagram $D$, construct a connected $4$-regular
multigraph $\fourmap_{D}$: its vertices are the chords of $D$, its
edges are the $2n$ circle arcs between cyclically consecutive
positions, each arc joining the chords owning its two endpoint
positions --- possibly one and the same chord, an arc between
cyclically adjacent positions of a single chord being a loop
whose two half-edges remain distinct. Traversing the circle traverses every arc exactly once
and visits each chord twice, through its two positions; the circle
is thus an Eulerian circuit $\atrail$ of $\fourmap_{D}$, and two
chords interlace in $D$ exactly when their two visits alternate
along $\atrail$. Each of the four half-edges at the vertex of a
chord $c=\{a,b\}$ is an arc-end at one of the positions $a,b$, and
the corners name them: reading the circle in its orientation, the
arc into position $p$ ends at $\corner{p}{L}$ and the arc out of
$p$ begins at $\corner{p}{R}$, so the corners
$\corner{a}{L},\corner{b}{L}$ are the incoming half-edges at $c$
and $\corner{a}{R},\corner{b}{R}$ the outgoing ones.

For the running example, $\fourmap_{D}$ has two vertices and four
parallel edges (Figure~\ref{fig:fd}).

\begin{figure}
\centering
\begin{tikzpicture}[every node/.style={font=\scriptsize}]
  \fill (0,0) circle (1.4pt) node[left=2pt] {$c_{0}$};
  \fill (3.0,0) circle (1.4pt) node[right=2pt] {$c_{1}$};
  \draw[thick] (0,0) to[bend left=55] node[above] {$01$} (3.0,0);
  \draw[thick] (0,0) to[bend left=20] node[above] {$12$} (3.0,0);
  \draw[thick] (0,0) to[bend right=20] node[below] {$23$} (3.0,0);
  \draw[thick] (0,0) to[bend right=55] node[below] {$30$} (3.0,0);
\end{tikzpicture}
\caption{The $4$-regular graph $\fourmap_{D}$ of the running
example.}
\label{fig:fd}
\end{figure}

A \emph{transition} at a vertex is a partition of its four
half-edges into two pairs, and a \emph{transition system} is a
choice of one transition at each vertex; following transitions
through vertices decomposes the edge set into closed
\emph{circuits}. A state $S$ determines a transition system of
$\fourmap_{D}$: at a chord $c\notin S$, pair the half-edges as
$\atrail$ does; at a chord $c\in S$, pair them the one other way
prescribed by the band edges of the tracing.

\begin{lemm}[tracing = circuit following]\label{lemm:engine}
Let $S$ be a state of a nonempty diagram $D$.
\begin{enumerate}
\item Under the naming of half-edges by corners, the corner graph
  of $S$ is the graph on the half-edges of $\fourmap_{D}$ whose
  edges pair the two ends of each edge of $\fourmap_{D}$ (the
  arcs) and pair the half-edges at each vertex by the transition
  of $S$ (the pass and band edges). Its cycles are therefore
  exactly the circuits of the transition system of $S$, and
  $\bc(S)$ is the number of circuits.
\item In the trichotomy of \cite{Tra11}, the three wirings at a
  chord realise the three transitions as follows: the pass wiring
  follows $\atrail$; the straight wiring is the
  orientation-consistent transition that does not follow
  $\atrail$; and the crossed wiring is the
  orientation-inconsistent transition.
\end{enumerate}
\end{lemm}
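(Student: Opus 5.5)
Part~(1) is essentially a relabelling, so I will make the half-edge/corner identification explicit and then compare edge sets. The half-edges of $\fourmap_{D}$ are the $4n$ arc-ends. Each arc runs from position $p$ to position $p+1$, and its two ends are named $\corner{p}{R}$ (outgoing at $p$) and $\corner{p+1}{L}$ (incoming at $p+1$). This is a bijection between half-edges and corners, because each position $p$ has exactly one arc in and one arc out. That remains true for a loop arc between adjacent positions $a,a+1$ of one chord: its two ends are $\corner{a}{R}$ and $\corner{a+1}{L}$, which are distinct corners, matching the remark that its half-edges stay distinct.

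Under this naming, the arc edges $\corner{p}{R}$---$\corner{p+1}{L}$ of the corner graph are exactly the pairings of the two ends of each edge of $\fourmap_{D}$. At a chord $c=\{a,b\}$ the four half-edges are the four corners at $a$ and $b$. The pass, straight and crossed wirings each partition these four corners into two pairs, so each is a transition at $c$, and I take this as the transition assigned by $S$ by definition. The corner graph is then literally the graph whose edges pair the ends of each edge of $\fourmap_{D}$ and pair half-edges at each vertex according to the transition. Walking alternately along an edge and through a transition is exactly circuit following, which Lemma~\ref{lemm:alternation} makes precise: every corner meets one arc and one non-arc edge. Hence the cycles of the corner graph are the circuits, and $\bc(S)$ is their number.

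For part~(2) I check the three pairings at $c$. In the pass wiring, $\corner{a}{L}$---$\corner{a}{R}$ pairs the arc into $a$ with the arc out of $a$, which is what $\atrail$ does when it passes through position $a$; likewise at $b$. In the straight wiring, $\corner{a}{L}$---$\corner{b}{R}$ and $\corner{a}{R}$---$\corner{b}{L}$ each pair an incoming half-edge with an outgoing one, so the transition is consistent with the orientation of $\atrail$. Since it differs from the pass pairing, it is the unique other consistent transition: there are exactly two ways to match two ins with two outs. In the crossed wiring, $\corner{a}{L}$---$\corner{b}{L}$ and $\corner{a}{R}$---$\corner{b}{R}$ pair in with in and out with out, so it is the orientation-inconsistent transition. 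These three are all three partitions of a $4$-set into pairs, which matches Traldi's trichotomy relative to the Euler circuit $\atrail$.

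The only delicate point is the loop case, when a chord has adjacent positions or $n=1$. There I must confirm that the corners really remain distinct half-edges and that the "in/out" labelling is still well defined. I would settle this by working with half-edges throughout rather than with vertex adjacencies.
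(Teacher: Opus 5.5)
Your proposal is correct and follows the paper's proof essentially step for step: you name half-edges by corners via the two ends of each arc, note that the pass and band wirings are transitions so that cycles of the corner graph are circuits, and classify the three wirings by whether they pair incoming (L) with outgoing (R) half-edges. Your extra remarks, that the naming stays bijective in the loop case and that there are exactly two in--out matchings, are small clarifications the paper leaves implicit.
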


\begin{proof}
(1) An arc of the corner graph joins $\corner{p}{R}$ to
$\corner{p+1}{L}$, which are precisely the two ends of the circle
arc from $p$ to $p+1$; and the non-arc edges at a chord pair its
four corners in one of the three possible ways, which under the
naming is a transition at that vertex --- for $c\notin S$ the pass
wiring, for $c\in S$ the band wiring of its twist. A circuit of
the transition system is a closed alternating sequence of edges
and transitions, that is, a cycle of this graph, and conversely;
the correspondence is a bijection.

(2) At the chord $c=\{a,b\}$ the incoming half-edges are
$\corner{a}{L},\corner{b}{L}$ and the outgoing ones
$\corner{a}{R},\corner{b}{R}$. The pass wiring pairs
$\corner{a}{L}$ with $\corner{a}{R}$, the arrival at $a$ with the
departure from $a$: it follows $\atrail$. The straight wiring
pairs $\corner{a}{L}$ with $\corner{b}{R}$ and $\corner{a}{R}$
with $\corner{b}{L}$, each incoming half-edge with the outgoing
half-edge at the \emph{other} position: a re-routing that
preserves the direction of travel, viz.\ the
orientation-consistent transition that does not follow $\atrail$.
The crossed wiring pairs $\corner{a}{L}$ with $\corner{b}{L}$ and
$\corner{a}{R}$ with $\corner{b}{R}$, the two incoming half-edges
together and the two outgoing together: the
orientation-inconsistent transition.
\end{proof}

\begin{theo}[framed Cohn--Lempel; Traldi]\label{theo:framedcl}
For every state $S$ of a framed chord diagram,
\[
\bc(S)=\nulltwo\bigl(\fim[S]\bigr)+1 .
\]
In particular, $S$ is a quasi-tree if and only if $\fim[S]$ is
nonsingular over $\GFtwo$, and
$\qtn(D)=\#\{S:\dettwo\fim[S]=1\}$.
\end{theo}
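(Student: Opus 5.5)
We prove $\bc(S)=\nulltwo(\fim[S])+1$ by induction on $|S|$. The chord-deletion lemma (Lemma~\ref{lemm:deletion}) lets us delete every chord outside $S$ without changing either side, since $\fim[S]$ depends only on the chords of $S$. So it suffices to show $\bc(E(D))=\nulltwo(\fim)+1$ for every diagram $D$, by induction on $n$. The base case $n=0$ holds because $\bc(\varnothing)=1$ and the empty matrix has nullity $0$.

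For the inductive step, write $S=E(D)$ and pick a chord $c$. We compare $D$ with $D-c$, using the handle-attachment lemma (Lemma~\ref{lemm:handle}) applied to the state $S\setminus c$ of $D$, whose $\bc$ equals $\bc_{D-c}(E(D-c))=\nulltwo(\fim')+1$ by induction. Here $\fim'=\fim[S\setminus c]$. On the linear-algebra side, write
\[
\fim=\begin{pmatrix}\fim'&v\\v^{T}&t_c\end{pmatrix},
\]
where $v$ is the interlacement column of $c$. Bordering a symmetric matrix changes nullity by $-1$, $0$ or $+1$ according to standard facts:
\begin{itemize}
\item nullity drops by one exactly when $v\notin\operatorname{col}(\fim')$;
\item when $v=\fim'x$, nullity rises by one or stays the same according as $t_c+x^{T}\fim'x=t_c+v^{T}x$ is $0$ or $1$, and this quantity is well defined over $\GFtwo$ by symmetry.
\end{itemize}
We must match these three cases with the three outcomes of Lemma~\ref{lemm:handle}:
\begin{itemize}
\item $\bc$ drops by one when the two pass edges of $c$ lie on different cycles;
\item otherwise $\bc$ rises by one or stays the same according to the parity of $\crx{\gamma}+t_c$.
\end{itemize}

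The key step is a $\GFtwo$ description of the cycles of the corner graph of $S'=S\setminus c$ in terms of $\fim'$. For each cycle, the set of chords of $S'$ whose band edges it traverses gives an indicator vector $y$. The first claim is that $\ker\fim'$ consists exactly of these indicators, via Lemma~\ref{lemm:engine}: a circuit of the transition system crosses each chord's vertex in a pattern that forces an interlacement parity. The second claim is that the two pass edges of $c$ lie on one cycle if and only if $v$ is orthogonal to every kernel vector, that is, $v\in\operatorname{col}(\fim')$ by symmetry.

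To make both claims precise, I would use the classical Cohn--Lempel argument. Following a cycle around the core circle, the number of positions of another chord $d$ between the two visits of a cycle to chord $e$ has parity determined by interlacement together with the twist pattern, through letter tracking (Lemma~\ref{lemm:letters}). In the case $v=\fim'x$, the second claim would then identify $x$ with the set of chords met by the segment $\gamma$, and $\crx{\gamma}\equiv x^{T}\fim'x$ via Lemma~\ref{lemm:letters}.

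The main obstacle is this identification of cycles with kernel and column-space data, especially the twisted diagonal contributions. If the direct argument becomes messy, I would instead cite Traldi's theorem \cite[Thm~4]{Tra11} for circuit partitions of $4$-regular graphs. The translation of Lemma~\ref{lemm:engine} then converts it immediately, since states correspond to transition systems with the pass, straight and crossed wirings matching Traldi's follow, consistent and inconsistent transitions, and $\fim$ is exactly Traldi's interlacement matrix with diagonal marking the inconsistent transitions.

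The final statements follow at once. Nonsingularity of $\fim[S]$ is equivalent to nullity $0$, which is equivalent to $\bc(S)=1$, and this gives the formula for $\qtn(D)$.
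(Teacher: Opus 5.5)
Your fallback is exactly the paper's proof. The paper handles the empty diagram, uses Lemma~\ref{lemm:engine} to read $\bc(S)$ as the number of circuits of the transition system of $S$ on $\fourmap_{D}$, observes that Traldi's matrix $I_{P}$ for that system is $\fim[S]$, and quotes his extended Cohn--Lempel equality.

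Your primary inductive route, however, has a gap at the step you call key, and that step is also misstated. The vector attached to a cycle $\Gamma$ must be $y_{\Gamma}$, recording the chords of $S'$ \emph{exactly one} of whose two band edges lies on $\Gamma$. If ``traverses'' means ``at least one'', a single twisted chord in the state already refutes the claim: its corner graph is one cycle through both crossed band edges, while $\ker(1)=0$. Nor does $\ker\fim'$ consist of the individual cycle vectors. They satisfy $\sum_{\Gamma}y_{\Gamma}=0$ and span a kernel of dimension $\bc(S')-1$, which also contains all their partial sums. Above all, the parity fact driving the induction is only gestured at (``through letter tracking''), so the inductive step is not proved as written.

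The missing lemma is a telescoping sum along the circle. For a cycle $\Gamma$, or a segment $\gamma$ of one cut at the two pass edges of $c$, put $\sigma(p)=1$ when the arc from $\corner{p}{R}$ to $\corner{p+1}{L}$ lies on it.
\begin{itemize}
\item At a position $p$ of a chord $d\in S'$, the corners $\corner{p}{L},\corner{p}{R}$ lie on different band edges of $d$ in either wiring, so $\sigma(p-1)+\sigma(p)\equiv y_{d}$.
\item At a position of $c$ this sum is $0$ on a cycle (pass edge) and $1$ on a segment, since the cut pass edge leaves exactly one of its two arcs in $\gamma$.
\item Summing over the positions strictly between the endpoints $p_{1}<p_{2}$ of a chord $e$ telescopes to $\sigma(p_{1})+\sigma(p_{2}-1)$. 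This is $0$ if $e$ is straight, $y_{e}$ if $e$ is crossed, and the parity of the number of pass edges of $e$ on $\Gamma$ if $e=c$.
\end{itemize}
For cycles this gives $\fim'y_{\Gamma}=0$, and $v^{\mathsf{T}}y_{\Gamma}\equiv 1$ exactly when $\Gamma$ carries just one pass edge of $c$. For the segment vector $x$ of $\gamma$ it gives $\fim'x=v$, the positions of $c$ between $p_{1}$ and $p_{2}$ contributing $v_{e}$. It also gives $x^{\mathsf{T}}\fim'x=\sum_{d}t_{d}x_{d}\equiv\crx{\gamma}$, since a twisted chord with both band edges on $\gamma$ contributes two crossed edges.

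These facts close your induction:
\begin{itemize}
\item In case~(1) of Lemma~\ref{lemm:handle}, $y_{\Gamma_{1}}$ witnesses $v\notin(\ker\fim')^{\perp}=\operatorname{col}(\fim')$.
\item In case~(2), $x$ witnesses $v\in\operatorname{col}(\fim')$ with $t_{c}+v^{\mathsf{T}}x\equiv t_{c}+\crx{\gamma}$.
\end{itemize}
So the three outcomes of handle attachment match the three nullity changes, and no spanning statement about the kernel is needed. Completed this way, your route is a self-contained proof from the tracing lemmas, whereas the paper imports the equality from \cite{Tra11}.
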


\begin{proof}
For the empty diagram both sides are $1$. Otherwise, by
Lemma~\ref{lemm:engine}, $\bc(S)$ is the number of circuits of the
transition system of $S$ on the connected $4$-regular graph
$\fourmap_{D}$, and that system deviates from $\atrail$ exactly on
$S$, orientation-inconsistently exactly at the twisted chords of
$S$. Traldi's matrix $I_{P}$ for this circuit partition
\cite[Thm~4 and Cor.~5]{Tra11} is the interlacement matrix of
$\atrail$ restricted to the deviating vertices, with diagonal ones
at the orientation-inconsistent ones --- which is precisely
$\fim[S]$, interlacement along $\atrail$ being chord interlacement.
His extended Cohn--Lempel equality gives
$\#\text{circuits}=\nulltwo(I_{P})+c(\fourmap_{D})
=\nulltwo(\fim[S])+1$.
\end{proof}

Restated in delta-matroid terms, Theorem~\ref{theo:framedcl}
says that $\fim$ is a binary representation of the ribbon-graphic
delta-matroid of the bouquet, the quasi-trees being its feasible
sets; that representation is a theorem of Bouchet, and the
looped-interlacement description of it is spelled out in
\cite[Thm~5.44 and \S5.7]{CMNR19} and
\cite[Prop.~2.21]{DK26}. The Cohn--Lempel route proves
it from scratch in the form this paper uses.

\begin{coro}[genus is a rank]\label{coro:genusrank}
$\eg(D)=\rktwo(\fim)$.
\end{coro}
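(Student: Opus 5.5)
This is a direct specialisation of Theorem~\ref{theo:framedcl} to the full state $S=E(D)$. The Euler genus was defined as $\eg(D)=n+1-\bc(E(D))$. Applying the framed Cohn--Lempel theorem with $S=E(D)$ gives
\[
\bc(E(D))=\nulltwo\bigl(\fim[E(D)]\bigr)+1=\nulltwo(\fim)+1,
\]
because the principal submatrix on all chords is $\fim$ itself. Substituting into the definition yields $\eg(D)=n-\nulltwo(\fim)$. Rank--nullity over $\GFtwo$ for the $n\times n$ matrix $\fim$ then gives $\eg(D)=\rktwo(\fim)$.

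The only point needing care is the degenerate case $n=0$. There $\bc(\varnothing)=1$ by convention, so $\eg(D)=0+1-1=0$. The empty matrix has rank $0$, and Theorem~\ref{theo:framedcl} already covers the empty diagram, so the identity holds there too.

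I expect no real obstacle: all of the content sits in Theorem~\ref{theo:framedcl}, which we may assume. The plan is therefore a one-line citation of that theorem at $S=E(D)$, followed by rank--nullity, with the empty-diagram check added for completeness.
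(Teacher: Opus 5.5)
Your proof is correct and is essentially the paper's own argument: specialise Theorem~\ref{theo:framedcl} to $S=E(D)$, so that $\eg(D)=n-\nulltwo(\fim)$, and then apply rank--nullity over $\GFtwo$. The separate check of the empty diagram is harmless but redundant, since Theorem~\ref{theo:framedcl} already covers that case.
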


\begin{proof}
Theorem~\ref{theo:framedcl} at $S=E(D)$:
$\eg(D)=n+1-\bc(E(D))=n-\nulltwo(\fim)=\rktwo(\fim)$.
\end{proof}

The reduction of Lemma~\ref{lemm:reduction} does not preserve this
quantity, and the failure is unbounded. The smallest example is the
doubled edge: two vertex discs joined by two parallel untwisted
bands, a plane ribbon graph of Euler genus $0$ with $\qtn=2$.
Reducing at a spanning tree yields the diagram of two interlaced
untwisted chords (Figure~\ref{fig:substitution}) --- the torus
bouquet, of Euler genus $2$. More
generally, in the plane graph of $k$ triangles sharing one vertex,
every spanning tree $T$ admits a spanning tree $T'$ using a
different edge pair in each triangle, so that
$U\coloneqq T\mathbin{\triangle}T'$ has $|U|=2k$. By
Lemma~\ref{lemm:substitution},
$\bc_{D'}(U)=\bc_{G}(T')=1$, so $\fim[U]$ is nonsingular by
Theorem~\ref{theo:framedcl}, and the diagram $D'$ produced from
$T$ has Euler genus at least $2k$ by
Corollary~\ref{coro:genusrank}.

\begin{coro}[parity]\label{coro:parity}
$\qtn(D)\equiv\dettwo(I+\fim)\pmod 2$.
\end{coro}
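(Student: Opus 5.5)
By Theorem~\ref{theo:framedcl}, $\qtn(D)=\sum_{S\subseteq E(D)}\dettwo\fim[S]$, where each summand is $0$ or $1$ and the empty set contributes $\dettwo\fim[\emptyset]=1$. Read modulo $2$, this is a sum in $\GFtwo$. The plan is to recognise the right-hand side as the classical principal-minor expansion of $\det(I+M)$ for a square matrix $M$ over a commutative ring, applied over $\GFtwo$ with $M=\fim$.

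The key identity is
\[
\det(I+M)=\sum_{S\subseteq E(D)}\det M[S].
\]
To prove it, expand $\det(I+M)$ by multilinearity in the rows. Each row $c$ is written as $e_{c}+M_{c}$, where $e_{c}$ is the unit row and $M_{c}$ is the $c$th row of $M$. Choosing $M_{c}$ for $c\in S$ and $e_{c}$ for $c\notin S$ gives a determinant that, after Laplace expansion along the unit rows, equals $\det M[S]$. The unit rows force the column choice with sign $+1$, since the unit vectors sit on the diagonal positions. The term $S=\emptyset$ gives $\det I=1$, which matches the convention for $\fim[\emptyset]$.

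Specialising to $\GFtwo$ gives $\dettwo(I+\fim)=\sum_{S}\dettwo\fim[S]$ in $\GFtwo$. The right-hand side counts, modulo $2$, the states $S$ with $\dettwo\fim[S]=1$, which is $\qtn(D)\bmod 2$ by Theorem~\ref{theo:framedcl}. For the empty diagram both sides equal $1$, since the empty determinant is $1$ and $\qtn=1$.

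There is no real obstacle. The only point needing care is that the sign in the expansion is always $+$: the fixed unit rows sit on the diagonal, so the complementary minor is principal with no sign twist. Over $\GFtwo$ signs are irrelevant anyway, so even that care is unnecessary.
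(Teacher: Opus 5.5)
Your proof is correct and follows essentially the same route as the paper. Both arguments combine the principal-minor expansion $\dettwo(I+\fim)=\sum_{S}\dettwo\fim[S]$ over $\GFtwo$ with Theorem~\ref{theo:framedcl}; the only difference is that the paper gets the expansion by evaluating that of $\dettwo(xI+\fim)$ at $x=1$, whereas you derive it directly from row multilinearity.
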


\begin{proof}
Expanding $\dettwo(xI+\fim)$ as a sum over principal minors and
evaluating at $x=1$ gives
$\dettwo(I+\fim)=\sum_{S}\dettwo\fim[S]$ in $\GFtwo$, which is the
mod-$2$ count of Theorem~\ref{theo:framedcl}.
\end{proof}

\begin{coro}[zero column]\label{coro:zerocolumn}
If a chord $e$ is untwisted and interlaces no other chord, then no
quasi-tree contains $e$.
\end{coro}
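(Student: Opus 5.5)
The plan is to use the framed Cohn--Lempel theorem (Theorem~\ref{theo:framedcl}). It reduces the claim to a statement about principal submatrices: for every state $S$ containing $e$, the matrix $\fim[S]$ is singular over $\GFtwo$.

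Fix such an $S$ and look at the column of $\fim[S]$ indexed by $e$. The diagonal entry is $\fim_{ee}=t_{e}=0$, because $e$ is untwisted. Each off-diagonal entry $\fim_{de}$ with $d\in S\setminus\{e\}$ is $0$, because $e$ interlaces no other chord. So this column is identically zero, and therefore $\dettwo\fim[S]=0$. The theorem then gives $\bc(S)=\nulltwo(\fim[S])+1\geq 2$, so $S$ is not a quasi-tree.

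There is no real obstacle. The one point to confirm is that the hypothesis kills both kinds of entries in the column: the twist bit controls the diagonal and interlacement controls the rest. This is exactly the definition of the framed interlacement matrix.

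As a sanity check independent of the matrix model, Lemma~\ref{lemm:handle}(2) should give the same conclusion. Take $T=S\setminus\{e\}$. Since $e$ interlaces nothing, the two positions of $e$ cut the circle into two arcs, and no chord has one endpoint on each arc. Tracing in $T$ then puts both pass edges of $e$ on one cycle, and the segment between them uses only chords whose endpoints lie on one side, so it traverses an even number of crossed edges. With $t_{e}=0$, the lemma then gives $\bc(S)=\bc(T)+1\geq 2$. This tracing argument is not needed, since the matrix argument is complete on its own.
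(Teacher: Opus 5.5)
Your proposal is correct and is essentially the paper's proof: the paper notes in one line that for $e\in S$ the matrix $\fim[S]$ has a zero row (your zero column, since $\fim$ is symmetric), and so $S$ is not a quasi-tree by Theorem~\ref{theo:framedcl}. Your tracing cross-check via Lemma~\ref{lemm:handle}(2) is also sound, though, as you say, it is not needed.
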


\begin{proof}
For $e\in S$ the matrix $\fim[S]$ has a zero row.
\end{proof}

\subsection{The interlace-polynomial
identity}\label{subsec:interlace}

Recall the two-variable interlace polynomial of Arratia,
Bollob\'as and Sorkin \cite{ABS04b}: for a graph $\lcg$ with loops
allowed, with adjacency matrix over $\GFtwo$,
\[
\ip(\lcg;x,y)
=\sum_{S\subseteq V(\lcg)}
(x-1)^{\rktwo \fim[S]}\,(y-1)^{\nulltwo \fim[S]},
\]
and the vertex-nullity interlace polynomial is
$\ipn(\lcg;y)=\ip(\lcg;2,y)$ \cite[\S4]{ABS04b}. At the point
$(2,1)$, with the convention $0^{0}=1$, the rank factor is trivial
and the nullity factor kills every singular principal minor, so
$\ip(\lcg;2,1)$ counts the induced subgraphs whose adjacency
matrix is nonsingular --- an observation already made in
\cite[\S5]{ABS04b}. Combining this
expansion with Theorem~\ref{theo:framedcl} gives the identity on
which the paper turns.

\begin{theo}[the model identity]\label{theo:modelidentity}
For every framed chord diagram $D$ with looped circle graph
$\lcg$,
\[
\qtn(D)
=\#\{S:\dettwo\fim[S]=1\}
=\ipn(\lcg;1)
=\ip(\lcg;2,1).
\]
\end{theo}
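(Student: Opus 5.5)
The identity is a chain of three equalities, each nearly immediate from what precedes it, and I would verify them from left to right.

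The first equality, $\qtn(D)=\#\{S:\dettwo\fim[S]=1\}$, is Theorem~\ref{theo:framedcl} restated. By that theorem $\bc(S)=1$ exactly when $\nulltwo(\fim[S])=0$, which holds exactly when $\fim[S]$ is nonsingular over $\GFtwo$. The empty state is consistent with this: $\bc(\varnothing)=1$ and $\dettwo\fim[\emptyset]=1$ by convention, and the same holds for the empty diagram.

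For the third equality, $\ipn(\lcg;1)=\ip(\lcg;2,1)$, I would unfold the definition $\ipn(\lcg;y)=\ip(\lcg;2,y)$ and set $y=1$.

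The substantive step is the remaining equality, between the determinant count and $\ip(\lcg;2,1)$. First I check that the adjacency matrix of $\lcg$ over $\GFtwo$ is exactly $\fim$. The loops sit at the twisted chords, giving $\fim_{cc}=t_{c}$, and the edges are the interlacements. So for every $S$ the matrix $\fim[S]$ is the adjacency matrix of the induced subgraph $\lcg[S]$. Substituting $x=2$ and $y=1$ into the subset expansion gives the summand
\[
1^{\rktwo\fim[S]}\cdot 0^{\nulltwo\fim[S]}.
\]
Under the convention $0^{0}=1$, this summand is $1$ when $\nulltwo\fim[S]=0$ and $0$ otherwise. Hence $\ip(\lcg;2,1)=\#\{S:\dettwo\fim[S]=1\}$. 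The term $S=\emptyset$ contributes $1$, since the empty matrix has nullity $0$, and this matches the determinant convention.

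The only point needing care, and so the main obstacle, is the bookkeeping of conventions. Three things must be checked:
\begin{itemize}
\item the convention $0^{0}=1$;
\item the empty principal minor, with $\dettwo\fim[\emptyset]=1$;
\item the empty diagram, where both sides equal $1$.
\end{itemize}
The point is that these all align with $\bc(\varnothing)=1$. Apart from this, the proof is a direct composition of Theorem~\ref{theo:framedcl} with the definition of the interlace polynomial.
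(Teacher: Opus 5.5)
Your proposal is correct and follows the paper's own argument. The first equality is Theorem~\ref{theo:framedcl}, the last is the definition $\ipn(\lcg;y)=\ip(\lcg;2,y)$, and the middle one is the observation that at $(2,1)$, with $0^{0}=1$, the subset expansion keeps exactly the nonsingular principal submatrices $\fim[S]$ of the adjacency matrix $\fim$ of $\lcg$ over $\GFtwo$.
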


The identity places quasi-tree counting on the line $y=1$ of the
complexity map of Bl\"aser and Hoffmann \cite{BH08}, to which we
return in Section~\ref{sec:atrails}. For the running example, the
three nonsingular principal minors of $\fim$ --- the empty one,
$(1)$ at $c_{0}$, and $\fim$ itself --- match the three traced
quasi-trees, and $\ip(\lcg;2,1)=3$.

\section{The determinantal cases}\label{sec:determinants}

We now prove the two determinantal theorems: Theorem~C by
principal unimodularity, and Theorem~D from it as a corollary.
Both results are known ---
Theorem~C is essentially the Matrix--Quasi-tree Theorem of Merino,
Moffatt and Noble \cite{MMN25}, and Theorem~D is due to Deng, Jin
and Yan \cite{DJY24} --- and the section's contribution is a pair
of short proofs from the $\GFtwo$ model.

\subsection{Skew lifts and the orientable
determinant}\label{subsec:orientable}

Let $D$ be orientable, so that $\fim$ has zero diagonal. Order the
chords by their first endpoints and let the \emph{skew lift}
$\skl\in\mathbb{Z}^{n\times n}$ have $\skl_{cd}=1$ and
$\skl_{dc}=-1$ whenever $c<d$ interlace, all other entries zero.
Bouchet proved that such a matrix is \emph{principally unimodular}:
every principal minor lies in $\{-1,0,1\}$
\cite[Cor.~2]{Bou87}. In his terminology circle graphs are
alternance graphs, principal unimodularity is the unimodularity of
an orientation, and the orientations induced by an arbitrary
choice of orientation for each chord are principally unimodular;
our first-endpoint lift is the instance in which every
chord is oriented from its first endpoint to its second (see also
the explicit construction in \cite[\S2]{BCG98}). Concretely, if
$c<d$ interlace then the circle meets their endpoints in the order
(first of $c$, first of $d$, second of $c$, second of $d$): both
marked endpoints lead their chords in this alternation, which is
exactly the case in which Bouchet's rule directs the edge from
$c$ to $d$, giving $\skl_{cd}=+1$ as displayed.

\begin{theo}[orientable determinant]\label{theo:orientable}
For an orientable framed chord diagram,
$\qtn(D)=\det(I+\skl)$.
\end{theo}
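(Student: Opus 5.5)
Expand $\det(I+\skl)$ over the integers as the sum of all principal minors, $\det(I+\skl)=\sum_{S\subseteq E(D)}\det\skl[S]$, with the empty minor contributing $1$. This is the integer version of the expansion used in Corollary~\ref{coro:parity}. The plan is to show that each term $\det\skl[S]$ equals $1$ exactly when $S$ is a quasi-tree and $0$ otherwise. Summing then gives $\qtn(D)$ by Theorem~\ref{theo:framedcl}.

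The first step is reduction mod~$2$. Since $D$ is orientable, $\fim$ has zero diagonal, and $\skl\equiv\fim\pmod 2$ entrywise because $-1\equiv 1$. Hence $\det\skl[S]\equiv\dettwo\fim[S]\pmod 2$. By Bouchet's principal unimodularity \cite[Cor.~2]{Bou87}, every $\det\skl[S]$ lies in $\{-1,0,1\}$. So $\det\skl[S]=0$ exactly when $\fim[S]$ is singular over $\GFtwo$, that is, exactly when $S$ is not a quasi-tree.

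The second step concerns sign, and this is the main point. $\skl[S]$ is a real skew-symmetric integer matrix, so its determinant is the square of its Pfaffian, an integer. Equivalently, a nonzero determinant forces even size and $\det(-M^{T})=\det M$ gives a nonnegative value. Either way $\det\skl[S]\geq 0$, so a nonzero value in $\{-1,0,1\}$ must be $+1$. In particular, odd $S$ give $0$, consistent with the mod-$2$ statement. Thus every nonsingular principal minor contributes exactly $+1$ and every singular one contributes $0$.

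Assembling, $\det(I+\skl)=\#\{S:\dettwo\fim[S]=1\}=\qtn(D)$ by Theorem~\ref{theo:framedcl}. The only non-elementary input is Bouchet's theorem that the first-endpoint lift is principally unimodular. That has been established in the paragraph preceding the statement, including the check that our sign convention matches his orientation rule. The remaining steps are routine.
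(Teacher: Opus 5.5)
Your proof is correct and is essentially the paper's own: expand $\det(I+\skl)$ over principal minors, use Bouchet's principal unimodularity together with $\det\skl[S]=\operatorname{Pf}(\skl[S])^{2}\ge 0$ to get each term in $\{0,1\}$, and identify the nonzero terms modulo~$2$ with the nonsingular $\fim[S]$, that is, the quasi-trees, via Theorem~\ref{theo:framedcl}. One small slip is in your alternative justification: for a skew-symmetric $M$ the identity $\det(-M^{\mathsf{T}})=\det M$ only forces odd-order minors to vanish and says nothing about sign in even order, so nonnegativity rests on the Pfaffian (or eigenvalue-pairing) argument that you also give.
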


\begin{proof}
Expanding over principal minors,
$\det(I+\skl)=\sum_{S}\det\skl[S]$. Each $\skl[S]$ is
skew-symmetric, so
$\det\skl[S]=\operatorname{Pf}(\skl[S])^{2}\ge 0$, the Pfaffian
being zero in odd sizes, and principal unimodularity puts the
determinant in $\{0,1\}$. Reducing
modulo $2$ turns $\det\skl[S]$ into $\dettwo\fim[S]$, so
$\det\skl[S]$ is exactly the quasi-tree indicator of $S$
(Theorem~\ref{theo:framedcl}), and the sum is $\qtn(D)$.
\end{proof}

To extend the theorem to ribbon graphs we need the reduction of
Lemma~\ref{lemm:reduction} to respect orientability, which it
does for a parity reason worth recording.

\begin{obse}[orientable parity]\label{obse:parity}
If $G$ is an orientable ribbon graph with $v$ vertex discs, then
every state $S$ satisfies $\bc(S)\equiv v+|S| \pmod 2$.
\end{obse}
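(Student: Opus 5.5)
We prove the congruence $\bc(S)\equiv v+|S|\pmod 2$ by induction on $|S|$, adding the bands of $S$ one at a time and tracking how each addition changes $\bc$ via the handle-attachment lemma (Lemma~\ref{lemm:handle}), which the paper notes extends verbatim to ribbon graphs. The base case is $S=\emptyset$. Here each of the $v$ vertex discs contributes exactly one boundary cycle: its arcs and pass edges trace its boundary once, and an isolated disc contributes one cycle by convention. So $\bc(\emptyset)=v$, as required.

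For the inductive step, take $S$ and a band $c\notin S$. It suffices to show that $\bc(S\cup c)-\bc(S)$ is always odd, i.e.\ equals $\pm1$ and never $0$. In case~(1) of Lemma~\ref{lemm:handle} the change is $-1$, so there is nothing to prove. In case~(2) the change is $+1$ or $0$ according as $\crx{\gamma}+t_{c}$ is even or odd, so we must show that $\crx{\gamma}+t_{c}$ is even.

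Orientability is what forces this. Since $G$ is orientable, we may choose disc orientations (a signed rotation system) in which every band is untwisted, so $t_{e}=0$ for all bands. Toggling a disc's orientation toggles bits of bands with one end there, and orientability means exactly that all bits can be cleared this way. These re-orientations only relabel L/R at the corners of the reversed disc, so they change neither the corner graph up to isomorphism nor $\bc$. With all twist bits zero, no crossed band edges occur at all, so $\crx{\gamma}=0$ and $t_{c}=0$. Hence the sum is even and case~(2) gives $+1$.

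The main point needing care is justifying that the handle-attachment lemma (letter tracking and the parity of segments) holds for multi-vertex ribbon graphs with the relative twist bits, and that re-orienting discs is harmless. The first is already asserted in the paper; for the second we check that reversing a disc swaps L and R at its corners and reverses its arcs, converting straight edges to crossed ones exactly for bands with one end on that disc. This matches the stated rule for toggling bits.

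Once every single-band addition is shown to change $\bc$ by an odd amount, induction gives $\bc(S)\equiv v+|S|$.
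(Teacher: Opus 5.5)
Your proof is correct, and it takes a different route from the paper's. The paper argues globally. The state $S$, as a surface with boundary, has Euler characteristic $v-|S|$. Its Euler genus $2k-(v-|S|)-\bc(S)$, with $k$ the number of components, is even because every component is orientable, and the congruence follows at once. You instead work locally inside the corner-tracing toolkit. First you normalise every twist bit to $0$ by disc reversals; as you check, these only swap $\mathrm{L}$ and $\mathrm{R}$ at the corners of the reversed disc and leave the corner graph itself unchanged. Then no crossed edges exist, so case~(2) of Lemma~\ref{lemm:handle} always gives $+1$, case~(1) gives $-1$, and induction from $\bc(\emptyset)=v$ finishes.

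The paper's argument is shorter and explains the parity conceptually as the evenness of orientable genus. However, it imports that fact from surface topology, together with an Euler-genus formula for multi-vertex states that the paper defines only for diagrams. Yours is purely combinatorial and in effect reproves that evenness from the tracing lemmas. It is the same mechanism the paper later uses in the proof of Lemma~\ref{lemm:pointwise}, and it shows directly that in an orientable ribbon graph each band toggle changes $\bc$ by exactly $\pm1$. Its costs are the two points you flag: the extension of Lemma~\ref{lemm:handle} to multi-vertex ribbon graphs, which the paper asserts, and the characterisation of orientability as clearability of all twist bits by disc reversals, which is standard.
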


\begin{proof}
The state $S$, as a surface with boundary, has Euler
characteristic $v-|S|$, and its Euler genus --- the sum over its
connected components, equal to twice the number of components
minus $(v-|S|)$ minus $\bc(S)$ --- is even, every component being
orientable when $G$ is. Twice the number of components being even
too, $\bc(S)\equiv v+|S|\pmod 2$.
\end{proof}

Equivalently: the delta-matroid of an orientable ribbon graph is
even. That statement is classical --- see
\cite[Prop.~5.3(4)]{CMNR19}, who trace it to Bouchet --- and the
parity proof above keeps the section self-contained. (The rank
identity of Corollary~\ref{coro:genusrank} likewise has a
delta-matroid reading; cf.\ \cite[Prop.~5.3(3)]{CMNR19}.)

\begin{coro}[Theorem C]\label{coro:theoA}
The number of spanning quasi-trees of an orientable ribbon graph
is computable in polynomial time.
\end{coro}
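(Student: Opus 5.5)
The plan is to reduce to the orientable bouquet case and then apply Theorem~\ref{theo:orientable}. First, since disconnected inputs reduce to connected ones (the count is $0$ under the single-boundary convention, or a product of connected counts under the componentwise convention), I may take $G$ connected. Lemma~\ref{lemm:reduction} then produces, in polynomial time, a framed chord diagram $D'$ with $\qtn(D')=\qtn(G)$. The construction is explicit: find a spanning tree $T$ by any standard algorithm, trace the single boundary cycle of $T$ to read off the positions, and set each twist bit $t'_{e}$ by tracing $T\mathbin{\triangle}\{e\}$. This is $O(|E|)$ tracings, each of polynomial size.

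The substantive step is to show that $D'$ is orientable, i.e.\ that every $t'_{e}=0$. This is where Observation~\ref{obse:parity} enters. Since $G$ is orientable with $v$ discs, every state satisfies $\bc(S)\equiv v+|S|\pmod 2$. Apply this to $S=T\mathbin{\triangle}\{e\}$, whose size is $|T|\pm1=v-1\pm1$. We get $\bc(T\mathbin{\triangle}\{e\})\equiv v+v-1\pm1\equiv 0\pmod 2$. Hence $\bc(T\mathbin{\triangle}\{e\})\neq1$, and by the calibration in Lemma~\ref{lemm:substitution} we have $t'_{e}=0$ for every band $e$. So $D'$ is orientable.

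Theorem~\ref{theo:orientable} now gives $\qtn(G)=\qtn(D')=\det(I+\skl(D'))$. This is the determinant of an integer matrix of size $|E(G)|$ with entries in $\{-1,0,1\}$, computable in polynomial time by fraction-free Gaussian elimination (Bareiss) or modular methods; the entries and the result are bounded by Hadamard's inequality. For a bouquet that is already presented as a diagram, one applies Theorem~\ref{theo:orientable} directly, which gives the stated formula.

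The only step needing care is the orientability transfer. It is not automatic, since the reduction can change the genus and the twist pattern. The parity observation settles it in one line, and once it is in place everything else is bookkeeping.
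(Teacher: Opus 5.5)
Your proposal is correct and matches the paper's proof: reduce at a spanning tree via Lemma~\ref{lemm:substitution}, use Observation~\ref{obse:parity} to force every calibrated twist bit $t'_{e}$ to vanish, then apply Theorem~\ref{theo:orientable}. The differences are only bookkeeping: you use $|T|=v-1$ where the paper uses $\bc_{G}(T)=1$, and your remarks on disconnected inputs and on evaluating the integer determinant in polynomial time spell out what the paper leaves implicit.
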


\begin{proof}
Reduce at a spanning tree $T$: the diagram $D'$ of
Lemma~\ref{lemm:substitution} has twist bits
$t'_{e}=1$ exactly when $\bc_{G}(T\mathbin{\triangle}\{e\})=1$;
but by Observation~\ref{obse:parity},
$\bc_{G}(T\mathbin{\triangle}\{e\})\equiv
v+|T|+1\equiv\bc_{G}(T)+1\equiv 0\pmod 2$, toggling one band
changing $|S|$ by one. So every twist bit
vanishes, $D'$ is orientable, and
Theorem~\ref{theo:orientable} computes
$\qtn(G)=\qtn(D')=\det\bigl(I+\skl(D')\bigr)$ in polynomial time.
\end{proof}

The relation of Theorem~\ref{theo:orientable} to the
Matrix--Quasi-tree Theorem of Merino, Moffatt and Noble
\cite[Thms~6.3 and~6.8]{MMN25} deserves a precise statement. For
a bouquet, the first-endpoint lift $\skl$ is, up to simultaneous
permutation and diagonal $\pm 1$-switching --- a signed
permutation congruence --- their directed-bouquet matrix, which they identify with Bouchet's signed
circle-graph matrix; Theorem~\ref{theo:orientable} is thus the
unit-weight bouquet case of their weighted Matrix--Quasi-tree
Theorem. The determinant has since been carried to a further
level of generality by Baker, Ding and Kim: the bases of any
regular orthogonal representation are counted by $\det(I+A)$
\cite[Cor.~2.9]{BDK25}, a result they note generalizes the
Matrix--Quasi-tree Theorem, with the critical-group
presentation recovered as their equation~(5); for a bouquet,
$\skl$ agrees with their boundary-word matrix at the empty
quasi-tree up to the sign and orientation conventions
catalogued in \cite[Rems.~A.3 and~A.4]{BDK25}, and, like ours,
their route into these determinants is Bouchet's circle-graph
unimodularity \cite{Bou87}. The published proof of Merino,
Moffatt and Noble, like ours, proceeds through
principally unimodular principal minors, and they record the
directed-medial derivation through the BEST theorem as an
alternative route; the general-map statement follows by applying
the substitution of Lemma~\ref{lemm:substitution} at a reference
quasi-tree. We have included the argument because the framed
Cohn--Lempel formulation places the determinant in the model used
throughout this paper, and claim nothing beyond that.

\subsection{One non-orientable loop}\label{subsec:oneloop}

Now let $D$ have exactly one twisted chord $e_{0}$, and let
$D_{0}$ be $D$ with $e_{0}$ untwisted. Note that $D-e_{0}$ and
$D_{0}-e_{0}$ are the same diagram, and both $D_{0}$ and
$D_{0}-e_{0}$ are orientable. Theorem~D rests on a pointwise
identity of quasi-tree indicators, for which we write
$\mathbf{1}_{D}(S)$ for the indicator of $\bc_{D}(S)=1$.

\begin{lemm}[pointwise identity]\label{lemm:pointwise}
For every state $S\subseteq E(D)\setminus\{e_{0}\}$,
\[
\mathbf{1}_{D}(S\cup e_{0})
=\mathbf{1}_{D_{0}}(S\cup e_{0})+\mathbf{1}_{D_{0}}(S),
\]
and the two summands on the right are never both $1$.
\end{lemm}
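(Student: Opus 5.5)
The plan is to pass to matrices via Theorem~\ref{theo:framedcl} and use linearity of the determinant over $\GFtwo$ in the row of $e_{0}$. Write $T=S\cup e_{0}$. The matrices $\fim_{D}[T]$ and $\fim_{D_{0}}[T]$ agree except at the diagonal entry $(e_{0},e_{0})$, which is $1$ in $D$ and $0$ in $D_{0}$. Expanding along the row of $e_{0}$, the determinant is affine in that entry, with coefficient the cofactor $\dettwo\fim[S]$. Here $\fim_{D}[S]=\fim_{D_{0}}[S]$, since $e_{0}\notin S$ and the two diagrams differ only in the twist of $e_{0}$. This gives
\[
\dettwo\fim_{D}[S\cup e_{0}]=\dettwo\fim_{D_{0}}[S\cup e_{0}]+\dettwo\fim_{D_{0}}[S]\pmod 2 .
\]
By Theorem~\ref{theo:framedcl}, each determinant is the corresponding quasi-tree indicator. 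So the identity holds modulo $2$.

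To upgrade this to an identity of integers, it suffices to show that the two right-hand indicators are never both $1$. Once that is known, their sum lies in $\{0,1\}$, and equality modulo $2$ of two values in $\{0,1\}$ is equality.

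For the disjointness I will use Observation~\ref{obse:parity}, applied to the orientable bouquet $D_{0}$ with $v=1$. It gives $\bc_{D_{0}}(U)\equiv 1+|U|\pmod 2$ for every state $U$. A quasi-tree of $D_{0}$ therefore has even cardinality. The sets $S$ and $S\cup e_{0}$ have opposite parities, so at most one of them can be a quasi-tree of $D_{0}$.

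There is also an algebraic route to the same fact: $\fim_{D_{0}}[U]$ is symmetric with zero diagonal, hence alternating, hence singular over $\GFtwo$ in odd size.

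I do not expect a real obstacle. The one point needing care is that the cofactor of the $(e_{0},e_{0})$ entry is exactly the principal minor on $S$, with the convention $\dettwo\fim[\emptyset]=1$ covering the case $S=\emptyset$. That case is consistent with the indicators: $\mathbf{1}_{D}(\{e_{0}\})=1$ because a Möbius band has one boundary component, $\mathbf{1}_{D_{0}}(\{e_{0}\})=0$, and $\mathbf{1}_{D_{0}}(\emptyset)=1$.
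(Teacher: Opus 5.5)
Your proof is correct, but it takes a different route from the paper's. The paper stays inside corner tracing. Because $S$ contains no twisted chord, its corner graph (the same in $D$ and $D_{0}$) has no crossed edges, and Lemma~\ref{lemm:handle}, applied to the two pass edges of $e_{0}$, settles everything at once. If the pass edges lie on different cycles, either twist bit merges those cycles and $\bc(S)\ge 2$. If they lie on one cycle, the twisted band leaves $\bc$ unchanged while the untwisted one raises it by one. So the identity and the disjointness come out of a single case analysis, with no parity argument and no appeal to Theorem~\ref{theo:framedcl}.

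You instead work in the $\GFtwo$ model. The cofactor expansion along the row of $e_{0}$ gives the congruence for any diagram and any chord whose twist bit is toggled. The hypothesis that $S$ carries no twist enters only at the integer upgrade, through evenness. Your route makes the algebra visible: the congruence is the mod-$2$ shadow of the rank-one identity $\det(B+uu^{\mathsf{T}})=\det B+u^{\mathsf{T}}\operatorname{adj}(B)\,u$ that the paper later uses to match the Deng--Jin--Yan determinant. Your parity step is likewise the linear-algebraic form of their even-delta-matroid argument.

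The paper's route buys independence from the imported Cohn--Lempel theorem. The first equality $\qtn(D)=\qtn(D_{0})+\qtn(D_{0}-e_{0})$ of Theorem~D is thus proved by tracing alone, although the determinantal half needs Theorem~\ref{theo:framedcl} in any case. Of your two disjointness arguments, the alternating-matrix one is preferable here, since it stays within the diagram formalism. Observation~\ref{obse:parity}, by contrast, is proved through the Euler characteristic of the surface.
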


\begin{proof}
The corner graphs of $S$ in $D$ and $D_{0}$ coincide, and $S$
contains only untwisted chords, so no crossed edges occur in
them. Consider the two pass edges of $e_{0}$ in the corner graph
of $S$.

If they lie on different cycles, then adjoining $e_{0}$ merges
those cycles for either twist bit
(Lemma~\ref{lemm:handle}(1)), so
$\mathbf{1}_{D}(S\cup e_{0})=\mathbf{1}_{D_{0}}(S\cup e_{0})$;
moreover $\bc(S)\ge 2$, so $\mathbf{1}_{D_{0}}(S)=0$ and the
identity holds.

If they lie on one cycle, then every segment $\gamma$ has
$\crx{\gamma}=0$, so by Lemma~\ref{lemm:handle}(2) the twisted
$e_{0}$ leaves $\bc$ unchanged while the untwisted $e_{0}$
increases it by one. If $\bc(S)=1$ this gives
$\mathbf{1}_{D}(S\cup e_{0})=1$,
$\mathbf{1}_{D_{0}}(S\cup e_{0})=0$ and
$\mathbf{1}_{D_{0}}(S)=1$; if $\bc(S)\ge 2$, all three indicators
vanish. In every case the identity holds, and the two summands
are never both $1$: the second requires $\bc(S)=1$, which forces
the one-cycle case, in which the first vanishes.
\end{proof}

\begin{coro}[Theorem D]\label{coro:theoB}
If $e_{0}$ is the unique twisted chord of $D$, then
\[
\qtn(D)=\qtn(D_{0})+\qtn(D_{0}-e_{0})
=\det\bigl(I_{n}+\skl(D_{0})\bigr)
+\det\bigl(I_{n-1}+\skl(D_{0}-e_{0})\bigr),
\]
computable in polynomial time.
\end{coro}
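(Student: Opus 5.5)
The plan is to split the states of $D$ according to whether they contain $e_{0}$, and then to use the pointwise identity of Lemma~\ref{lemm:pointwise} to replace each twisted count by orientable ones. I would write
\[
\qtn(D)=\sum_{S\subseteq E(D)\setminus\{e_{0}\}}\mathbf{1}_{D}(S)
+\sum_{S\subseteq E(D)\setminus\{e_{0}\}}\mathbf{1}_{D}(S\cup e_{0}).
\]

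First I would handle the states avoiding $e_{0}$. For such $S$, the pass edges of $e_{0}$ are wired identically for either twist bit, so the corner graphs of $S$ in $D$ and in $D_{0}$ coincide. Hence $\mathbf{1}_{D}(S)=\mathbf{1}_{D_{0}}(S)$.

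Next I would handle the states containing $e_{0}$, using Lemma~\ref{lemm:pointwise} to get
\[
\mathbf{1}_{D}(S\cup e_{0})=\mathbf{1}_{D_{0}}(S\cup e_{0})+\mathbf{1}_{D_{0}}(S).
\]
Summing the two parts gives
\[
\sum_{S\not\ni e_{0}}\bigl(\mathbf{1}_{D_{0}}(S)+\mathbf{1}_{D_{0}}(S\cup e_{0})\bigr)
+\sum_{S\not\ni e_{0}}\mathbf{1}_{D_{0}}(S).
\]
The first sum is $\qtn(D_{0})$. The second counts the quasi-trees of $D_{0}$ avoiding $e_{0}$, which by the chord-deletion lemma (Lemma~\ref{lemm:deletion}) is $\qtn(D_{0}-e_{0})$.

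For the determinantal form, both $D_{0}$ and $D_{0}-e_{0}$ are orientable, so Theorem~\ref{theo:orientable} gives $\qtn(D_{0})=\det(I_{n}+\skl(D_{0}))$ and $\qtn(D_{0}-e_{0})=\det(I_{n-1}+\skl(D_{0}-e_{0}))$. Each integer determinant can be computed in polynomial time, for instance by Bareiss elimination.

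The only point needing care is the degenerate case $n=1$. There $D_{0}-e_{0}$ is the empty diagram, and I would check that $\det(I_{0})=1=\qtn(\varnothing)$ under the stated conventions. Direct tracing confirms the result: $\qtn(D)=2$ for a single twisted loop. The main conceptual step, Lemma~\ref{lemm:pointwise}, is already available, so nothing else should be an obstacle.
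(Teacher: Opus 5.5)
Your proof is correct and is essentially the paper's argument: split by membership of $e_{0}$, sum Lemma~\ref{lemm:pointwise} over $S\subseteq E(D)\setminus\{e_{0}\}$, use chord deletion, and apply Theorem~\ref{theo:orientable} to the two orientable diagrams. The only difference is bookkeeping. You absorb the $e_{0}$-avoiding quasi-trees of $D$ into $\qtn(D_{0})$ and read the extra $\mathbf{1}_{D_{0}}(S)$ terms as $\qtn(D_{0}-e_{0})$. The paper instead reads the summed pointwise identity as $\qtn(D_{0})$ and the avoiding quasi-trees as $\qtn(D-e_{0})=\qtn(D_{0}-e_{0})$.
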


\begin{proof}
Summing the pointwise identity over all
$S\subseteq E(D)\setminus\{e_{0}\}$ counts, on the left, the
quasi-trees of $D$ containing $e_{0}$, and on the right the
quasi-trees of $D_{0}$ containing $e_{0}$ plus those avoiding it
--- that is, $\qtn(D_{0})$. The quasi-trees of $D$ avoiding
$e_{0}$ number $\qtn(D-e_{0})=\qtn(D_{0}-e_{0})$
(Lemma~\ref{lemm:deletion}; the two diagrams are equal). Adding
the two counts gives the first equality, and
Theorem~\ref{theo:orientable} applies to both orientable terms.
\end{proof}

The result is due to Deng, Jin and Yan \cite[Thm~4.3]{DJY24}, in
the form of the single integer determinant
$\det(I_{n}+A_{\mathrm{DJY}})$ of the signed matrix
$A_{\mathrm{DJY}}\coloneqq K+uu^{\mathsf{T}}$, whose skew part is
$K$ and whose one nonzero diagonal entry, at $e_{0}$, is supplied
by the coordinate vector $u$ of $e_{0}$. Their construction begins
with an arbitrary labelling of the chords and an arbitrary
ordering of the two endpoints of each chord; after indexing the
chords by first endpoint and ordering each endpoint pair from
first to second, the skew part is exactly $K=\skl(D_{0})$. Under
arbitrary labels and endpoint orderings the two matrices are
related by a signed permutation congruence
$K=Q^{\mathsf{T}}\skl(D_{0})Q$, with the coordinate vector of
$e_{0}$ transforming accordingly, and every determinant below is
unchanged. The universal
adjugate form of the rank-one determinant identity,
\[
\det(B+uu^{\mathsf{T}})
=\det B+u^{\mathsf{T}}\!\operatorname{adj}(B)\,u ,
\]
valid with no invertibility assumption, at $B=I_{n}+K$ evaluates
$u^{\mathsf{T}}\!\operatorname{adj}(B)\,u$ to the
$(e_{0},e_{0})$-cofactor
$\det\bigl((I_{n}+K)[\overline{e_{0}}]\bigr)$, writing
$\overline{e_{0}}$ for the index set $E(D)\setminus\{e_{0}\}$; since
$K[\overline{e_{0}}]=\skl(D_{0}-e_{0})$ with the inherited chord
order, their determinant is exactly our sum. Nor is
the proof above really different from theirs: their argument runs
through the partial Petrial and the parity of feasible sets of an
even delta-matroid, of which Lemma~\ref{lemm:pointwise} is the
corner-tracing transposition. We claim no novelty for
Theorem~D beyond, perhaps, the brevity of this route.

The running example traces the corollary: its unique twisted
chord is $e_{0}=c_{0}$, the untwisted double crossing $D_{0}$ has
$\qtn(D_{0})=\det\begin{pmatrix}1&1\\-1&1\end{pmatrix}=2$, the
single-chord diagram $D_{0}-e_{0}$ has
$\qtn=\det(1)=1$, and indeed $3=2+1$.

\FloatBarrier
\section{A-trails and \#P-completeness}\label{sec:atrails}

The two remaining theorems are proved in this section: the
dictionary (Theorem~B), by which framed chord diagrams are, up to
natural identifications, exactly $4$-regular maps with a
distinguished A-trail and quasi-trees correspond to A-trails; and
the hardness theorem (Theorem~A), with its consequence for the
line $y=1$ of the interlace polynomial.

\subsection{Maps, smooth transitions, and
A-trails}\label{subsec:maps}

A \emph{$4$-regular map} is a $4$-regular multigraph $\fourmap$
--- loops and parallel edges permitted, connectedness not assumed
--- with a \emph{rotation} at each vertex: a cyclic order of its
four half-edges. Following \cite{GS12} we work with rotation systems
only; surfaces play no part in what follows. Transition systems
and their circuits are as in Section~\ref{subsec:framedcl}. At a
vertex $v$ with rotation $(h_{1},h_{2},h_{3},h_{4})$, a transition
is \emph{smooth} if it pairs only half-edges adjacent in the
rotation; there are exactly two smooth transitions,
$\smoothA{v}=\{h_{1}h_{2}\mid h_{3}h_{4}\}$ and
$\smoothB{v}=\{h_{2}h_{3}\mid h_{4}h_{1}\}$, and one
\emph{crossing} transition
$\crossT{v}=\{h_{1}h_{3}\mid h_{2}h_{4}\}$
(Figure~\ref{fig:transitions}). The terminology is that of
\cite{YJ22}. Reversing a rotation preserves the trichotomy, and
nothing below distinguishes a rotation from its reversal.

\begin{figure}
\centering
\begin{tikzpicture}[
  stub/.style={thick},
  pairing/.style={very thick},
  every node/.style={font=\scriptsize}]
\foreach \name/\xshift in {alpha/0,beta/4.2,chi/8.4}{
\begin{scope}[shift={(\xshift,0)}]
  \draw (0,0) circle (0.9);
  \draw[stub] (45:0.9)  -- (45:1.45)  node[pos=1.25]{$h_{1}$};
  \draw[stub] (-45:0.9) -- (-45:1.45) node[pos=1.25]{$h_{2}$};
  \draw[stub] (225:0.9) -- (225:1.45) node[pos=1.25]{$h_{3}$};
  \draw[stub] (135:0.9) -- (135:1.45) node[pos=1.25]{$h_{4}$};
\end{scope}}
\begin{scope}[shift={(0,0)}]
  \draw[pairing] (45:0.9) to[bend right=50] (-45:0.9);
  \draw[pairing] (135:0.9) to[bend left=50] (225:0.9);
  \node at (0,-1.75) {$\smoothA{v}=\{h_{1}h_{2}\mid h_{3}h_{4}\}$};
\end{scope}
\begin{scope}[shift={(4.2,0)}]
  \draw[pairing] (-45:0.9) to[bend right=50] (225:0.9);
  \draw[pairing] (135:0.9) to[bend right=50] (45:0.9);
  \node at (0,-1.75) {$\smoothB{v}=\{h_{2}h_{3}\mid h_{4}h_{1}\}$};
\end{scope}
\begin{scope}[shift={(8.4,0)}]
  \draw[pairing] (45:0.9) -- (225:0.9);
  \draw[pairing] (-45:0.9) -- (135:0.9);
  \node at (0,-1.75) {$\crossT{v}=\{h_{1}h_{3}\mid h_{2}h_{4}\}$};
\end{scope}
\end{tikzpicture}
\caption{The two smooth transitions and the crossing transition.}
\label{fig:transitions}
\end{figure}

An \emph{A-trail} of $\fourmap$ is a smooth transition system with
exactly one circuit; a map with an A-trail is necessarily
connected, one circuit covering all edges. The definition fixes a
counting convention --- an A-trail is what is elsewhere an
undirected Eulerian circuit, taken up to cyclic shift and
reversal, all of whose transitions are smooth --- and the next
lemma reconciles it exactly with the sequence-based conventions.

A \emph{dart} is an edge together with a choice of direction; the
two directions of a loop give distinct darts.

\begin{lemm}[counting conventions]\label{lemm:conventions}
Let $T$ be a one-circuit transition system on a connected
$4$-regular map with $m$ edges. Then $T$ determines exactly $2m$
dart-rooted directed Eulerian sequences; cyclic shift partitions
them into two orbits of size $m$, one for each direction around
the circuit, and reversal interchanges the two orbits.
Consequently the counts of one-circuit systems, of directed
Eulerian circuits up to cyclic shift, and of dart-rooted directed
Eulerian sequences stand in the ratios $1:2:2m$, and \#P-hardness
under polynomial-time Turing reductions is indifferent to the
choice among these conventions.
\end{lemm}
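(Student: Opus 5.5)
The plan is to make the circuit of $T$ concrete as a cyclic sequence of darts and then count sequences directly. Since $T$ has exactly one circuit and the map is connected, following $T$ traverses every edge exactly once. Fix one direction of travel around this circuit. It then reads as a cyclic sequence $(d_{0},d_{1},\dots,d_{m-1})$ of darts: the head of $d_{i}$ is the tail of $d_{i+1}$, and the passage through that vertex pairs the relevant half-edges as $T$ prescribes. For loops, "head" and "tail" are read at the level of half-edges, not vertices; this is why the two directions of a loop are distinct darts.

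A dart-rooted directed Eulerian sequence compatible with $T$ is specified by its first dart. Its successors are then forced: the next half-edge is the partner, under the transition of $T$ at the current vertex, of the half-edge just entered. So each of the $2m$ darts roots exactly one sequence. Conversely, every such sequence traverses each edge once, in one of the two directions of the circuit.

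Next I will show that the $2m$ sequences are pairwise distinct and split into two orbits.

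\begin{enumerate}
\item The $m$ rotations $(d_{i},\dots,d_{i-1})$ of the forward reading are distinct as sequences, because their first darts $d_{i}$ are distinct. Each edge occurs once, so the darts $d_{0},\dots,d_{m-1}$ are pairwise distinct.
\item The backward reading uses the opposite darts $\bar d_{i}$. None of these coincides with any $d_{j}$: the forward reading contains exactly one dart per edge, so $\bar d_{i}$, which lies on the same edge as $d_{i}$ but points the other way, is absent from it. This includes loops, precisely by the dart convention.
\end{enumerate}

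Hence the $2m$ rooted sequences are the $m$ forward rotations and the $m$ backward rotations. Cyclic shift acts transitively and freely on each family, giving two orbits of size $m$, one per direction. Reversal of a sequence turns a forward sequence into a backward one, so it interchanges the orbits.

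The ratios follow by summing over one-circuit systems $T$. Different $T$ yield disjoint sets of sequences: a sequence recovers its transition system, since each vertex is passed exactly twice and the two passages record the pairing of its four half-edges. The sequence-to-system map is therefore well defined with fibres of size $2m$. Likewise, directed circuits up to cyclic shift are the orbits, with fibres of size $2$. This gives the ratios $1:2:2m$.

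For the complexity remark, $m$ is read off the input in polynomial time. Multiplying or dividing by $2$ or $2m$ is therefore a polynomial-time, one-query Turing reduction in each direction, so \#P-hardness transfers among the three conventions.

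The main obstacle is purely bookkeeping around loops, and around a vertex visited twice consecutively. I must check that a loop traversed in the circuit still contributes one dart per edge and that its reversal is a different dart. I will handle this by working throughout with half-edges, exactly as the corner-graph model of Lemma~\ref{lemm:engine} does, rather than with vertex sequences.
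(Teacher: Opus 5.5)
Your proposal is correct and follows essentially the same route as the paper: each rooted sequence is determined by its initial dart, the $m$ forward darts and their $m$ opposites give the two cyclic-shift orbits, which reversal interchanges, every sequence reads back $T$, and the computable factors $2$ and $2m$ transfer \#P-hardness. Your half-edge treatment of loops and your explicit check that the backward darts are disjoint from the forward ones spell out details the paper leaves implicit.
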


\begin{proof}
From each starting dart, following the continuation prescribed by
$T$ at every arrival produces a directed Eulerian sequence
returning to its start, since $T$ has one circuit; distinct
starting darts give distinct rooted sequences, their initial darts
differing, so there are exactly $2m$. Fixing a direction around
the circuit, the $m$ darts traversed in that direction are the $m$
possible starting points, and the corresponding rooted sequences
are precisely the cyclic shifts of one another: one orbit of size
$m$. The other direction traverses exactly the $m$ opposite darts,
giving the second orbit, and reversing a sequence starts it at a
reversed dart, so reversal interchanges the two orbits. Every
sequence reads back $T$ from its consecutive pairs at each vertex,
so the quotients by nothing, by cyclic shift, and by shift and
reversal together count $2m$, $2$ and $1$ per system. A factor
computable from the input does not affect \#P-hardness under
Turing reductions.
\end{proof}

Ge and \v{S}tefankovi\v{c} \cite{GS12} present A-trails as
vertex--edge sequences, and Brightwell and Winkler \cite{BW05}
count Eulerian circuits directed and unrooted;
Lemma~\ref{lemm:conventions} converts the sequence convention to
the transition-system convention by an explicitly computable
factor, so their hardness theorem applies to our count.

\begin{lemm}[existence; Yan--Jin]\label{lemm:existence}
Every connected $4$-regular map has an A-trail, and one can be
constructed in polynomial time.
\end{lemm}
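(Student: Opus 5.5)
We plan to prove the lemma by a polynomial-time local-merging argument in the style of Kotzig's transformations. We start from an arbitrary smooth transition system, say $\smoothA{v}$ at every vertex, and repeatedly reduce the number of circuits until one remains.

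First we fix the starting system and compute its circuits. Following transitions is linear-time, so this is immediate. The system decomposes the edge set into $k\ge 1$ circuits.

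The key step is the merging move. Suppose $k\ge 2$. Connectedness of $\fourmap$ gives a vertex $v$ visited by two distinct circuits $\Gamma_{1}\ne\Gamma_{2}$. At such a vertex the current transition pairs $h_{1}h_{2}$ on one circuit and $h_{3}h_{4}$ on the other; otherwise both pairs would lie on the same circuit. We switch $v$ to the other smooth transition $\smoothB{v}=\{h_{2}h_{3}\mid h_{4}h_{1}\}$. Exactly as in Lemma~\ref{lemm:handle}(1), this replaces two pairings lying on distinct cycles by two pairings that each join an end of $\Gamma_{1}$ to an end of $\Gamma_{2}$. Cutting each cycle once and reconnecting alternately therefore yields a single circuit, whichever of the two non-current transitions is used. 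So switching to the other smooth transition lowers the circuit count by exactly one, and the system remains smooth. Concretely, we reuse the corner-graph picture of Lemma~\ref{lemm:engine}(1): the transition system is the graph on half-edges whose cycles are the circuits, and the argument is the one already given for handle attachment case~(1).

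We iterate the merging move. Each iteration reduces $k$ by one, so at most $|V(\fourmap)|$ iterations occur. Each iteration needs only a recomputation of circuits and a search for a vertex lying on two circuits, both polynomial. The process ends with a smooth one-circuit system, which is an A-trail.

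The main obstacle is justifying that a vertex on two distinct circuits always exists while $k\ge2$. We argue as follows: the circuits partition the edge set, so if every vertex met only one circuit, the vertex sets of the circuits would be disjoint and no edge could join them. This would contradict the connectedness of $\fourmap$. A further subtlety is a loop at $v$, or a circuit visiting $v$ twice. The hypothesis ``two distinct circuits at $v$'' already forces the two pairs of the current transition to lie on different circuits, so the case analysis above covers these. The argument is the one attributed to Yan and Jin \cite{YJ22}, and the constructive bound follows from it.
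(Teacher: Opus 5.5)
Your proposal is correct and follows the paper's proof essentially step for step. Both start from an arbitrary smooth system and use connectedness to find a vertex visited by two distinct circuits. Both then switch that vertex to its other smooth transition and merge the two circuits by the splice of Lemma~\ref{lemm:handle}(1). One small point: your bound of $|V(\fourmap)|$ iterations is true, because once a vertex is switched its two passages stay on one circuit, circuits only ever merging, so each vertex is switched at most once. But you state the bound without justification; the paper's bound of $r-1$ switches follows directly from the circuit count dropping by one each time.
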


\begin{proof}
Choose either smooth transition at each vertex, and let the
resulting smooth system have $r$ circuits. If $r\ge 2$, some
vertex $v$ is visited by two distinct circuits: otherwise the edge
set of any one circuit would meet no other circuit at any vertex
and would thus be a union of connected components, contradicting
connectedness. The two passages through $v$ use the two pairs of
its current smooth transition and belong to different circuits;
switching $v$ to its other smooth transition re-pairs the four
half-edges so that each new pair takes one half-edge from each old
pair, and the splice argument of Lemma~\ref{lemm:handle}(1) merges
the two circuits into one, leaving all other circuits unchanged.
Each switch decreases the circuit count by one, so at most $r-1$
switches produce an A-trail, and every step is a local
recomputation.
\end{proof}

The statement and the switching proof are Lemma~3.1 of Yan and Jin
\cite{YJ22}; the algorithmic reading is immediate from their
argument, and we include the proof to keep the section
self-contained. Related compatible-tour results go back to Kotzig
\cite{Kot68a,Kot68b} and Jackson \cite{Jac87}.

\subsection{The dictionary}\label{subsec:dictionary}

\begin{theo}[the A-trail dictionary]\label{theo:dictionary}
Let $\fourmap$ be a connected $4$-regular map with $n$ vertices
and $\atrail$ an A-trail of $\fourmap$. There is a framed chord
diagram $D(\fourmap,\atrail)$ of size $n$, computable in
polynomial time, whose states correspond bijectively to the smooth
transition systems of $\fourmap$ --- the state $S$ corresponding
to the system deviating from $\atrail$ exactly at the vertices in
$S$ --- with $\bc(S)$ equal to the number of circuits.
In particular,
\[
\qtn\bigl(D(\fourmap,\atrail)\bigr)
=\#\{\text{A-trails of }\fourmap\}.
\]
\end{theo}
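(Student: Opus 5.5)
The plan is to read the diagram off the A-trail, mirroring the construction of $\fourmap_{D}$ in Section~\ref{subsec:framedcl} in reverse.

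\textbf{Construction.} Fix a direction of traversal of $\atrail$ and read it as a cyclic sequence of $2n$ vertex visits. Each vertex is visited exactly twice, since $\atrail$ uses both pairs of its transition. Number the visits $0,\dotsc,2n-1$ along the traversal, and let the chord of a vertex $v$ be the pair of its two visit positions. The circle arcs are then exactly the edges of $\fourmap$, and $\fourmap_{D}\cong\fourmap$ as graphs, with $\atrail$ becoming the circle circuit.

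\textbf{Twist bits.} At $v$, the transition of $\atrail$ is one smooth transition, say $\smoothA{v}$. The remaining two transitions are $\smoothB{v}$ and $\crossT{v}$. By Lemma~\ref{lemm:engine}(2), adjoining the chord $v$ realises the straight wiring (the orientation-consistent non-$\atrail$ transition) if $t_{v}=0$, and the crossed wiring (the orientation-inconsistent one) if $t_{v}=1$. So I set $t_{v}=0$ exactly when $\smoothB{v}$ is orientation-consistent with the chosen direction of $\atrail$, and $t_{v}=1$ otherwise. Concretely, write the visits as incoming half-edges $\corner{a}{L},\corner{b}{L}$ and outgoing ones $\corner{a}{R},\corner{b}{R}$. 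Then $\atrail$ pairs $\corner{a}{L}\corner{a}{R}\mid\corner{b}{L}\corner{b}{R}$, and since this pairing is smooth, these pairs are rotation-adjacent. The other smooth transition pairs each half-edge with its other rotation neighbour. Whether that neighbour of $\corner{a}{L}$ is $\corner{b}{R}$ or $\corner{b}{L}$ is read directly from the rotation, and this fixes $t_{v}$. Every step is clearly polynomial.

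\textbf{Bijection and circuit count.} Let $S$ be a state. It corresponds to the transition system that uses $\atrail$'s transition off $S$ and the other smooth transition on $S$. Because exactly two transitions at each vertex are smooth, this is a bijection between the $2^{n}$ states and the $2^{n}$ smooth systems, and it deviates from $\atrail$ exactly on $S$. By the choice of twist bits, the band wiring of $v\in S$ is precisely $\smoothB{v}$ under the corner naming. Lemma~\ref{lemm:engine}(1) then identifies the corner graph of $S$ with the circuit-following graph of this system, so $\bc(S)$ equals the number of circuits. Restricting to one-circuit systems gives $\qtn(D(\fourmap,\atrail))=\#\{\text{A-trails}\}$.

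The main obstacle is the twist-bit step. It must be checked carefully that the wiring at each chord truly matches the smooth transition rather than the crossing one, including at vertices carrying loops. There the half-edges must be kept distinct, as stipulated in the construction of $\fourmap_{D}$.
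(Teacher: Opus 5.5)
Your proposal is correct and follows the paper's proof essentially step for step. You read the diagram off the passage sequence of $\atrail$, so that $\fourmap_{D}$ recovers $\fourmap$ with $\atrail$ as the circle. You then set $t_{v}$ according to whether the orientation-consistent re-routing of Lemma~\ref{lemm:engine}(2) is the other smooth transition, and conclude by Lemma~\ref{lemm:engine}(1). Your explicit rule, $t_{v}=0$ exactly when the rotation neighbour of $\corner{a}{L}$ other than $\corner{a}{R}$ is $\corner{b}{R}$, is a correct concrete rendering of the paper's ``local inspection of the rotation at $v$''.
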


\begin{proof}
Fix an arbitrary starting point and direction of traversal for
$\atrail$; other choices cyclically shift or reverse the resulting
diagram. The circle of $D(\fourmap,\atrail)$ is the passage
sequence of $\atrail$: cyclically, the trail makes exactly two
passages through each vertex, giving $2n$ positions, and the chord
of a vertex joins its two passages. The trail traverses each edge
of $\fourmap$ exactly once, between consecutive passages --- a
loop edge, its two half-edges distinct, between two consecutive
passages through its own vertex --- so the construction of
Section~\ref{subsec:framedcl} applied to this diagram returns
$\fourmap$ with $\atrail$ as its distinguished Eulerian circuit,
the edges of $\fourmap$ being the circle arcs.

It remains to calibrate the twist bits. At a vertex $v$ the trail's
own transition is smooth, since $\atrail$ is an A-trail; of the
two remaining transitions, exactly one is the other smooth
transition and one is the crossing $\crossT{v}$. By
Lemma~\ref{lemm:engine}(2) the two band wirings of the chord of
$v$ realise the orientation-consistent and the
orientation-inconsistent re-routings, and we set $t_{v}\coloneqq 0$
if the orientation-consistent re-routing is the smooth one, and
$t_{v}\coloneqq 1$ otherwise --- a local inspection of the
rotation at $v$ against the two passages of $\atrail$.

The calibration is illustrated in Figure~\ref{fig:calibration}.
With it, the transition system of a state $S$
(Section~\ref{subsec:framedcl}) follows $\atrail$ off $S$ and uses
the other \emph{smooth} transition at each vertex of $S$: it is a
smooth transition system deviating from $\atrail$ exactly on $S$.
Since a smooth system is determined by its deviation set, the
correspondence is a bijection, and
$\bc(S)$ counts its circuits by Lemma~\ref{lemm:engine}(1).
Specialising to one circuit gives the count of A-trails.
\end{proof}

\begin{figure}
\centering
\begin{tikzpicture}[
  stub/.style={thick},
  pairing/.style={very thick},
  every node/.style={font=\scriptsize}]
\foreach \xshift in {0,4.6,9.2}{
\begin{scope}[shift={(\xshift,0)}]
  \draw (0,0) circle (0.9);
  \draw[stub] (45:0.9)  -- (45:1.4);
  \draw[stub] (-45:0.9) -- (-45:1.4);
  \draw[stub] (225:0.9) -- (225:1.4);
  \draw[stub] (135:0.9) -- (135:1.4);
\end{scope}}
\begin{scope}[shift={(0,0)}]
  \draw[pairing] (45:0.9) to[bend right=50] (-45:0.9);
  \draw[pairing] (135:0.9) to[bend left=50] (225:0.9);
  \node[align=center,text width=3.4cm] at (0,-1.9)
    {used by $\atrail$: pass (chord absent)};
\end{scope}
\begin{scope}[shift={(4.6,0)}]
  \draw[pairing] (-45:0.9) to[bend right=50] (225:0.9);
  \draw[pairing] (135:0.9) to[bend right=50] (45:0.9);
  \node[align=center,text width=3.6cm] at (0,-1.9)
    {other smooth transition: chord present, $t_{v}$ calibrated};
\end{scope}
\begin{scope}[shift={(9.2,0)}]
  \draw[pairing] (45:0.9) -- (225:0.9);
  \draw[pairing] (-45:0.9) -- (135:0.9);
  \node[align=center,text width=3.6cm] at (0,-1.9)
    {crossing $\crossT{v}$: excluded from the state choices};
\end{scope}
\end{tikzpicture}
\caption{The twist calibration at one vertex of
$(\fourmap,\atrail)$.}
\label{fig:calibration}
\end{figure}

\begin{obse}[the converse]\label{obse:converse}
Every nonempty framed chord diagram $D$ arises as
$D(\fourmap,\atrail)$. At each vertex of the graph $\fourmap_{D}$
of Section~\ref{subsec:framedcl}, the pass wiring and the band
wiring of its chord realise two distinct transitions; and any two
distinct transitions of a four-element half-edge set are the two
smooth transitions of some cyclic order, unique up to reversal,
the remaining transition being its crossing --- place the two
pairs of that remaining transition so that they alternate. Give
$\fourmap_{D}$ these rotations and take $\atrail$ to be the
circle: then $\atrail$ uses only smooth transitions, hence is an
A-trail, and the construction of Theorem~\ref{theo:dictionary}
returns $D$ itself. The two constructions are mutually inverse up
to independent reversal of the cyclic order at each vertex and
cyclic shift or reversal of the distinguished trail, none of which
the diagram records. The empty
diagram is excluded: its circle visits no vertex, and it
corresponds to no $4$-regular map.
\end{obse}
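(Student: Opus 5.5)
To prove Observation~\ref{obse:converse}, I will start from the graph $\fourmap_{D}$ and its circuit $\atrail$ of Section~\ref{subsec:framedcl}, choose rotations at each vertex, and then check that Theorem~\ref{theo:dictionary} returns $D$. The steps, in order, are the combinatorial fact about transitions, the choice of rotations, the recovery of the chords and positions, the recovery of the twist bits, and finally the bookkeeping of the ambiguities.

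\emph{The fact about transitions.} Let $X$ be a four-element set of half-edges. It has exactly three pairings, and any two distinct pairings $\pi,\pi'$ share no pair. Let $\chi=\{x_{1}x_{3}\mid x_{2}x_{4}\}$ be the third pairing. Its pairs can be placed on a cycle so that they alternate, giving the cyclic order $(x_{1},x_{2},x_{3},x_{4})$. The two pairings of this cycle into adjacent elements are exactly the two pairings other than $\chi$, namely $\pi$ and $\pi'$. A cyclic order of four elements is determined, up to reversal, by its crossing pairing: there are three orders up to reversal and three pairings, matched bijectively. So the rotation is unique up to reversal.

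\emph{Choosing the rotations.} At the vertex of a chord $c=\{a,b\}$, the pass wiring pairs $\corner{a}{L}\corner{a}{R}$, while either band wiring pairs an $a$-corner with a $b$-corner. The two wirings are therefore distinct transitions. I apply the fact to get a rotation at each vertex in which both are smooth. The circle $\atrail$ follows the pass transition at every vertex, so it is a smooth system. By Lemma~\ref{lemm:engine}(1) applied to $S=\emptyset$, it has $\bc(\emptyset)=1$ circuit, so $\atrail$ is an A-trail.

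\emph{Recovering $D$.} Run the construction of Theorem~\ref{theo:dictionary} with the circle's own starting point and orientation. The passage sequence of $\atrail$ reproduces the positions $0,\dots,2n-1$ and chords of $D$, since the circle visits each chord exactly at its two positions. For the twist bit of $c$: by Lemma~\ref{lemm:engine}(2), the band wiring of $c$ in $D$ is the orientation-consistent re-routing if $t_{c}=0$ and the orientation-inconsistent one if $t_{c}=1$. That band wiring was made the other smooth transition, so the calibration rule gives back $t_{v}=t_{c}$. This bit-matching is the one step needing care, and it is exactly what Lemma~\ref{lemm:engine}(2) supplies.

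\emph{Mutual inverseness.} Theorem~\ref{theo:dictionary} applied to $(\fourmap,\atrail)$ produces a diagram from which $\fourmap_{D}$ recovers $\fourmap$ and $\atrail$, as the proof of that theorem notes. The rotation at each vertex is recovered only up to reversal, by the uniqueness in the transition fact. The diagram is recovered only up to the choice of start and direction of the trail, that is, up to cyclic shift or reversal. Neither choice affects which transitions are smooth.

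\emph{The empty diagram.} It has no chords, so $\fourmap_{D}$ would have no vertices, and no map corresponds to it.
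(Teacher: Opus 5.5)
Your proposal is correct and follows essentially the same route as the paper. The observation's own justification runs through the same steps: the transition fact, with the third pairing placed so its pairs alternate; rotations making the pass and band wirings the two smooth transitions; the circle as an A-trail; and the claim that Theorem~\ref{theo:dictionary} returns $D$. Your explicit twist-bit check via Lemma~\ref{lemm:engine}(2) and the calibration rule spells out a step the paper leaves implicit.
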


It is worth saying precisely whose machinery this is. Traldi's
circuit-partition interpretation \cite{Tra11,Tra13} already
identifies $\ipn$ of a looped circle graph with a generating
function for compatible circuit partitions of the associated
$4$-regular graph --- in our setting, Theorem~\ref{theo:framedcl}
combined with the expansion of
Section~\ref{subsec:interlace} --- and
Theorem~\ref{theo:dictionary} is its statewise A-trail
specialisation: the additional local observation is the choice of
rotations that makes the two permitted transitions at each vertex
precisely the two smooth ones, together with the converse
construction. The directed counterpart --- quasi-trees of an
\emph{orientable} bouquet are the Eulerian circuits of an
associated $2$-in $2$-out digraph --- is implicit in Bouchet
\cite{Bou89} and explicit in Merino \cite[Thm~13]{Mer23}, the
determinant enumeration of Eulerian circuits in such digraphs
being due to Macris and Pul\'e \cite{MP96} and Lauri
\cite{Lau97}, and it lives on the tractable side: directed Eulerian circuits are counted by the BEST
theorem, which gives an alternative directed-medial derivation of
the Matrix--Quasi-tree determinant, recorded in
\cite[Rem.~3.7]{MMN25}. The standard directed construction does
not extend directly to arbitrary twists, and the undirected
specialisation lands, as we now show, on the hard side.
The closest structural relative is the twisted-dual analysis of
Yan and Jin \cite{YJ22}, whose proof of their Theorem~1.1 turns a
map with a chosen A-trail, after a partial Petrial, into the
medial graph of a bouquet. On the orientable side the
correspondence now carries group structure as well:
T\'othm\'er\'esz \cite[Thms~5.2 and~5.9]{Tot26} identifies the
Jacobian of an embedded graph with the sandpile group of its
medial digraph, acting canonically on quasi-trees through the
Eulerian tours of that digraph; we return to this in
Section~\ref{sec:remarks}.

\subsection{Hardness}\label{subsec:hardness}

\begin{theo}[hardness]\label{theo:hardness}
Computing $\qtn$ is \#P-complete under polynomial-time Turing
reductions, already for framed chord diagrams, and hence for
bouquets and for ribbon graphs.
\end{theo}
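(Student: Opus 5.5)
The plan is to prove membership and hardness separately, and to obtain hardness by a parsimonious-up-to-computable-factor reduction from A-trail counting via the dictionary of Theorem~\ref{theo:dictionary}.

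\textbf{Membership in \#P.} For a framed chord diagram, a state $S$ is a quasi-tree if and only if $\dettwo\fim[S]=1$ (Theorem~\ref{theo:framedcl}). Equivalently, one can simply trace the corner graph and check whether $\bc(S)=1$. Either test is polynomial, so $\qtn$ counts accepting witnesses of a polynomial-time verifier. The same corner tracing works for an arbitrary ribbon graph given by its signed rotation system. Hence $\qtn$ lies in \#P for bouquets and for ribbon graphs alike.

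\textbf{Hardness.} I would start from the theorem of Ge and \v{S}tefankovi\v{c} \cite{GS12} that counting A-trails of $4$-regular maps is \#P-complete. Lemma~\ref{lemm:conventions} converts their sequence convention to our transition-system convention by the computable factor $2m$ (or $2$). Given an instance map $\fourmap$, the reduction proceeds as follows.
\begin{enumerate}
\item Check connectivity. If $\fourmap$ is disconnected it has no A-trail, and we output $0$.
\item Otherwise, construct an A-trail $\atrail$ in polynomial time by Lemma~\ref{lemm:existence}.
\item Build $D(\fourmap,\atrail)$ in polynomial time. By Theorem~\ref{theo:dictionary}, $\qtn\bigl(D(\fourmap,\atrail)\bigr)$ equals the number of A-trails.
\end{enumerate}
A single oracle call, followed by multiplication by the convention factor, therefore answers the \#P-complete problem. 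This gives \#P-hardness under Turing reductions for framed chord diagrams. A diagram is by definition a presentation of a bouquet, and bouquets are ribbon graphs, so the hardness transfers to both classes. The empty map, which has $n=0$ and is excluded by Observation~\ref{obse:converse}, is a trivial special case that we handle directly.

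\textbf{Expected obstacle.} The delicate point is not the reduction itself but matching conventions exactly with \cite{GS12}. First, their maps must be $4$-regular maps given by rotation systems, possibly with loops and multi-edges, as our dictionary handles. Second, their A-trails must be exactly our smooth one-circuit systems, meaning that ``A-trail'' forbids the crossing transition relative to the rotation and not some other pairing. I would check their definition of A-trail against the smooth/crossing trichotomy of Figure~\ref{fig:transitions}. If their hard instances are plane or embedded maps given by another encoding, I would insert a polynomial conversion to rotation systems. Any discrepancy in rooting or direction is absorbed by Lemma~\ref{lemm:conventions}.
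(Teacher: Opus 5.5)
Your proposal is correct and follows the paper's proof essentially step for step. Membership is by corner tracing or Gaussian elimination over $\GFtwo$ (Theorem~\ref{theo:framedcl}). Hardness is the single-oracle-call reduction from the A-trail count of \cite{GS12}: answer $0$ on disconnected maps, otherwise construct an A-trail by Lemma~\ref{lemm:existence} and query $\qtn(D(\fourmap,\atrail))$ via Theorem~\ref{theo:dictionary}, with Lemma~\ref{lemm:conventions} absorbing the difference in counting conventions.
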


\begin{proof}
Inputs are encoded plainly: a diagram by its double-occurrence
word and twist bits; a ribbon graph by a signed rotation system
--- its half-edge incidences, the cyclic attachment order at each
vertex disc, and one twist bit per band; a map by its graph and
rotations. Membership: a state $S$ is a
polynomially checkable certificate, by corner tracing or by
Gaussian elimination over $\GFtwo$ (Theorem~\ref{theo:framedcl}).

Hardness: Ge and \v{S}tefankovi\v{c} prove that counting
Eulerian tours of $4$-regular graphs is polynomial-time Turing
reducible to counting A-trails of $4$-regular maps --- their
construction produces, from a $4$-regular graph $F_{0}$, a map
with exactly $2^{|V(F_{0})|}$ A-trails per Eulerian tour of
$F_{0}$ --- which together with their Theorem~1 establishes
\#P-completeness of the A-trail count \cite[Thms~1 and~3]{GS12},
building on the theorem of Brightwell and Winkler \cite{BW05}; by
Lemma~\ref{lemm:conventions} the completeness is indifferent to
the counting convention. It
therefore suffices to count A-trails of a $4$-regular map
$\fourmap$ using an oracle for $\qtn$. If $\fourmap$ is
disconnected it has no A-trail, and we answer $0$; otherwise
construct an A-trail $\atrail$ (Lemma~\ref{lemm:existence}), build
$D(\fourmap,\atrail)$, and make the single oracle query
$\qtn(D(\fourmap,\atrail))$, which is the A-trail count by
Theorem~\ref{theo:dictionary}.
\end{proof}

Note that the reduction from A-trails is exact --- one oracle
call, no interpolation --- and parsimonious at the level of
transition systems; the Turing character of the theorem is
inherited from the chain of \cite{GS12}.

\begin{coro}[equivalent formulations]\label{coro:formulations}
The following counting problems are all \#P-complete, and
polynomial-time Turing equivalent to one another:
\begin{enumerate}
\item given a framed chord diagram, the number of its quasi-trees;
\item given a connected ribbon graph, the number of its
  quasi-trees;
\item given a framed chord diagram, the number of chord subsets
  indexing a nonsingular principal submatrix of its framed
  interlacement matrix;
\item given a looped circle graph with a framed chord
  representation, the number of its full-rank induced subgraphs,
  viz.\ the evaluation $\ip(\lcg;2,1)=\ipn(\lcg;1)$;
\item given a $4$-regular map, the number of its A-trails.
\end{enumerate}
\end{coro}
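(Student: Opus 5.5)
The plan is to prove all five problems \#P-complete by arranging them in a cycle of polynomial-time Turing reductions, anchoring hardness at item~(5) and membership at items~(1)--(4). For membership, each of (1)--(4) has a polynomially checkable certificate: a state $S$, checked either by corner tracing or by Gaussian elimination over $\GFtwo$ on $\fim[S]$. For (2), corner tracing on the signed rotation system handles the check directly, so every problem lies in \#P. Item~(5) needs more care, because the \#P witness must be a transition system. We would take as certificate a choice of smooth transition at each vertex and verify that it has exactly one circuit. This is polynomial, and the count of such certificates is exactly the A-trail count under the convention of Lemma~\ref{lemm:conventions}.

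For the equivalences, I would proceed as follows. Items (1), (3) and (4) are literally the same function of the same input by Theorem~\ref{theo:modelidentity}, since $\fim$ is read off from the double-occurrence word and twist bits in polynomial time. The only point to check is that in (4) the framed chord representation is part of the input. The reduction (1)$\to$(2) is the identity, because a bouquet presented as a diagram is a connected ribbon graph. Conversely, (2)$\to$(1) is Lemma~\ref{lemm:reduction}: we compute a spanning tree and apply Lemma~\ref{lemm:substitution}, giving a single oracle call. The reduction (5)$\to$(1) is the proof of Theorem~\ref{theo:hardness}: answer $0$ if the map is disconnected; otherwise construct an A-trail by Lemma~\ref{lemm:existence}, build $D(\fourmap,\atrail)$, and query once, using Theorem~\ref{theo:dictionary}. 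Finally, (1)$\to$(5) uses Observation~\ref{obse:converse}. Given a nonempty diagram $D$, we build $\fourmap_{D}$ with the rotations that make the pass and band wirings the two smooth transitions at each vertex. Its A-trail count then equals $\qtn(D)$. The empty diagram is answered directly with $1$.

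Hardness of (5) is \cite[Thms~1 and~3]{GS12}, adjusted by Lemma~\ref{lemm:conventions}, so the cycle transports \#P-hardness to every item. The step I expect to need the most attention is the construction in (1)$\to$(5). The rotations must be produced explicitly and in polynomial time from the wirings at each chord. For that I would place the two pairs of the crossed-out third transition in alternation around the four half-edges, which is a constant-size local computation per vertex. It is also worth confirming that the dictionary construction applied to these rotations returns $D$ itself, twist bits included. This follows from the calibration rule: the band wiring is the other smooth transition, so the calibration assigns the twist bit whose wiring that is, which is $t_{c}$. The remaining small care is convention bookkeeping: the A-trail count via one-circuit transition systems must be consistent across (5) in both directions, which Lemma~\ref{lemm:conventions} handles through a computable factor.
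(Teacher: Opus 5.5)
Your proposal is correct and follows the paper's proof essentially step for step. The matching steps are: (1), (3) and (4) identified by the model identity; (1) and (2) interreduced by containment and spanning-tree substitution; (1) and (5) linked by the dictionary, the converse observation and the existence lemma, with the empty diagram answered $1$ and disconnected maps $0$; and completeness drawn from the hardness theorem. Your explicit \#P-membership certificate for (5), a smooth transition system with exactly one circuit, is a small detail that the paper leaves implicit.
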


\begin{proof}
Problems (1), (3) and (4) ask for the same number, by
Theorem~\ref{theo:modelidentity}; (2) reduces to (1) by
Lemma~\ref{lemm:reduction} and contains it; and (1) and (5) are
equivalent by Theorem~\ref{theo:dictionary},
Observation~\ref{obse:converse} and Lemma~\ref{lemm:existence},
the empty diagram being handled separately with count $1$ and
disconnected maps with count $0$. Completeness is
Theorem~\ref{theo:hardness}.
\end{proof}

In problem~(4) the framed chord representation is auxiliary help to the
algorithm, not a qualification weakening the statement: hardness
holds a fortiori when the input is the graph alone, promised to
be a looped circle graph, an oracle for that problem being usable
after discarding the representation.

\subsection{The line \texorpdfstring{$y=1$}{y=1} of the interlace
polynomial}\label{subsec:line}

Bl\"aser and Hoffmann \cite{BH08} proved that evaluating
$\ip(x,y)$ is \#P-hard at almost every point of the plane; their
hardness region excludes the line $y=1$ entirely, and the point
$(2,1)$ is among the explicit exceptions of their Corollary~4.4
(on the line $x=2$ the exceptional ordinates are
$0,1,2,1\pm\sqrt{2}$). Theorem~\ref{theo:hardness} supplies the
missing hard point, and their cloning technique propagates it
along the line. To keep the propagation inside our restricted
graph class we need one more construction.

\begin{lemm}[clone closure]\label{lemm:cloneclosure}
Let $D$ be a framed chord diagram with looped circle graph $\lcg$
and let $k\ge 1$. There is a diagram $D^{(k)}$, computable in
polynomial time, whose looped circle graph is the $k$-fold clone
of $\lcg$ in the sense of \cite[\S3.1]{BH08}: each vertex $v$ is
replaced by $k$ twins carrying the loop status of $v$; the twins
of $v$ are pairwise adjacent if $v$ is looped and pairwise
non-adjacent otherwise; and for distinct original vertices
$v\neq w$, every twin of $v$ is adjacent to every twin of $w$
exactly when $vw\in E(\lcg)$.
\end{lemm}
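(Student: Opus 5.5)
The plan is a direct local construction on the double-occurrence word: replace each chord by a bundle of $k$ chords whose endpoints occupy small consecutive blocks at the two original positions. Interlacement between chords of different bundles is then inherited from the original pair, and the arrangement inside a bundle is chosen to produce either a clique or an independent set among the twins.

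Concretely, for each chord $v=\{a,b\}$ with $a<b$ in the linear reading of $D$, I replace position $a$ by a block of $k$ consecutive positions $a_{1},\dots,a_{k}$ and position $b$ by a block $b_{1},\dots,b_{k}$. Order preservation along the circle is the point: the blocks replace single points in their original cyclic order, and the result is renumbered $0,\dots,2kn-1$. The twins of $v$ are the chords $v_{i}$ joining one point of the $a$-block to one point of the $b$-block, each given twist bit $t_{v}$. This makes the diagonal of the new framed interlacement matrix, and hence the loop status of every twin, that of $v$, as required. Within a bundle I take one of two matchings.
\begin{itemize}
\item If $t_{v}=1$, I use the \emph{parallel} matching $v_{i}=\{a_{i},b_{i}\}$. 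For $i<j$, exactly one endpoint of $v_{j}$, namely $a_{j}$, lies strictly between $a_{i}$ and $b_{i}$, since $b_{j}$ comes after $b_{i}$. So the twins pairwise interlace.
\item If $t_{v}=0$, I use the \emph{nested} matching $v_{i}=\{a_{i},b_{k+1-i}\}$. For $i<j$, both endpoints of $v_{j}$ lie strictly between those of $v_{i}$. So the twins are pairwise non-interlacing.
\end{itemize}

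Next comes the check for distinct original chords $v\neq w$. Take any twin $v_{i}$ and any twin $w_{j}$. The endpoints of $w_{j}$ lie in the blocks replacing the endpoints of $w$, and these blocks are disjoint from, and ordered relative to, the blocks of $v$ exactly as the single points were. Hence the number of endpoints of $w_{j}$ strictly between the endpoints of $v_{i}$ equals the number of endpoints of $w$ strictly between those of $v$. So $v_{i}$ and $w_{j}$ interlace if and only if $v$ and $w$ do, which is the third clause of the clone definition. The construction produces $kn$ chords and $2kn$ positions, clearly in polynomial time for fixed or polynomially bounded $k$. The case $k=1$ returns $D$ itself.

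I expect no genuine obstacle. The only point needing care is the within-bundle case split: the pattern must be matched to the loop status so that looped vertices get cliques and unlooped ones independent sets, as in the Bl\"aser--Hoffmann clone. The first verification I would write out in full is the parallel/nested interlacement computation above. Everything else is the observation that replacing points by blocks preserves the ``exactly one endpoint between'' count across distinct bundles.
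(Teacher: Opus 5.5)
Your proposal is correct and is essentially the paper's own proof. Each endpoint is replaced by a block of $k$ consecutive positions and every twin inherits $t_{v}$; twisted chords use equal slot orders, giving the alternating pattern $a_{i}a_{j}b_{i}b_{j}$ and hence a clique, and untwisted chords use reversed slot orders, giving nested chords and hence an independent set; interlacement between different bundles is inherited because the blocks sit exactly where the original endpoints did.
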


\begin{proof}
Replace each position of $D$ by a block of $k$ consecutive
positions. For a chord $c$ with positions $a,b$, attach $k$ clone
chords $c_{1},\dotsc,c_{k}$, all with twist bit $t_{c}$: clone
$c_{i}$ joins the $i$-th slot of the $a$-block to the $i$-th slot
of the $b$-block if $t_{c}=1$, and to the $(k{+}1{-}i)$-th slot if
$t_{c}=0$. Each clone has one endpoint in each block, and the
blocks occupy the intervals where the endpoints of $c$ lay, so
$c_{i}$ interlaces a clone of another chord $d$ exactly when $c$
interlaces $d$. Within a pair of blocks, equal slot orders put the
endpoints of $c_{i}$ and $c_{j}$ ($i<j$) in the alternating
pattern $a_{i}a_{j}b_{i}b_{j}$, so twisted clones pairwise
interlace; reversed slot orders nest them, so untwisted clones do
not. This is precisely the clone adjacency of \cite{BH08}.
\end{proof}

\begin{lemm}[cloning identity]\label{lemm:cloneidentity}
With $\lcg^{(k)}$ the $k$-fold clone of $\lcg$,
\[
\ip\bigl(\lcg^{(k)};x,1\bigr)
=\ip\bigl(\lcg;\,k(x-1)+1,\,1\bigr).
\]
\end{lemm}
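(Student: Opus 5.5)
At $y=1$ only the nonsingular principal submatrices survive, so the plan is to show that
\[
\ip(\lcg;x,1)=\sum_{S}(x-1)^{|S|}\,[\dettwo\fim[S]=1],
\]
and to compare the nonsingular sets of $\lcg^{(k)}$ with those of $\lcg$. Indeed, at $y=1$ the factor $(y-1)^{\nulltwo}$ (with $0^0=1$) kills every singular $S$, and for nonsingular $S$ the rank equals $|S|$. The right-hand side of the lemma is therefore $\sum_{S\text{ nonsing}}k^{|S|}(x-1)^{|S|}$. So it suffices to prove that each nonsingular $S\subseteq V(\lcg)$ corresponds to exactly $k^{|S|}$ nonsingular subsets of $V(\lcg^{(k)})$, each of size $|S|$, and that every nonsingular subset arises this way.

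\textbf{Step 1: no nonsingular set contains two twins.} Take $T\subseteq V(\lcg^{(k)})$ containing two twins $v_i,v_j$ of the same $v$. By the clone definition, their rows in the adjacency matrix over $\GFtwo$ agree. Off the pair they agree because both are adjacent to exactly the twins of the neighbours of $v$, and to the other twins of $v$ iff $v$ is looped. On the pair, both entries equal $t_v$: the diagonal is the loop bit, and $v_i v_j$ is an edge iff $v$ is looped. Hence the rows restricted to $T$ coincide and $\fim^{(k)}[T]$ is singular.

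\textbf{Step 2: transversals.} If $T$ contains at most one twin of each vertex, let $S$ be its projection to $V(\lcg)$, so $|T|=|S|$. The map sending each twin to its original vertex is then an isomorphism of looped induced subgraphs: loop status is inherited, and twins of distinct $v\neq w$ are adjacent exactly when $vw\in E(\lcg)$. Thus $\fim^{(k)}[T]=\fim[S]$ after relabelling, so $T$ is nonsingular iff $S$ is. Each $S$ has exactly $k^{|S|}$ such transversals.

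\textbf{Step 3: summation.} Summing $(x-1)^{|T|}$ over nonsingular $T$ gives $\sum_{S\text{ nonsing}}k^{|S|}(x-1)^{|S|}=\ip(\lcg;k(x-1)+1,1)$. The empty set contributes $1$ on both sides, consistent with $\dettwo$ of the empty matrix being $1$.

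I expect no real obstacle. The only care needed is in Step 1, checking the twin-pair entries for the looped and unlooped cases separately.
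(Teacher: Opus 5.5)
Your proposal is correct and follows the paper's proof essentially step for step. Like the paper, you argue that twins have identical adjacency rows, so any set containing two twins gives a singular submatrix. You then note that each transversal of a nonsingular $S$ induces $\fim[S]$, with $k^{|S|}$ such choices, and sum $(k(x-1))^{|S|}$ over the nonsingular $S$.
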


\begin{proof}
Two clones of the same vertex have identical rows in the adjacency
matrix of $\lcg^{(k)}$: their diagonal and mutual entries agree
--- both $0$ for unlooped twins, both $1$ for looped ones --- and
their entries elsewhere agree because they have the same
neighbours. An induced subgraph containing two clones of one
vertex is therefore singular, and a nonsingular induced subgraph
selects at most one clone per vertex; selecting one clone of each
vertex of $T\subseteq V(\lcg)$ induces a matrix equal to
$\fim[T]$, and there are $k^{|T|}$ such selections. Hence, with
$0^{0}=1$,
\[
\ip\bigl(\lcg^{(k)};x,1\bigr)
=\sum_{\substack{T\subseteq V(\lcg)\\
\fim[T]\ \text{nonsingular}}}
k^{|T|}\,(x-1)^{|T|}
=\ip\bigl(\lcg;\,k(x-1)+1,\,1\bigr). \qedhere
\]
\end{proof}

This is the specialisation at $y=1$ of the cloning identity of
\cite[eq.~(3.5)]{BH08}, whose Theorem~4.12(2) is exactly the
interpolation mechanism used next.

\begin{coro}[the line $y=1$]\label{coro:line}
For every fixed rational $\xi\neq 1$, evaluating
$\ip(\lcg;\xi,1)$ is \#P-hard under polynomial-time Turing
reductions for looped circle graphs, even when a framed chord
representation is supplied.
\end{coro}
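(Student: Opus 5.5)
We reduce from the \#P-complete evaluation at $(2,1)$ of Theorem~\ref{theo:hardness} (problem~(4) of Corollary~\ref{coro:formulations}), by interpolation along the line $y=1$. Fix a rational $\xi\neq 1$, and suppose we have an oracle evaluating $\ip(\cdot;\xi,1)$ on looped circle graphs given with framed chord representations. Given a diagram $D$ of size $n$ with looped circle graph $\lcg$, define the one-variable polynomial
\[
P(z)\coloneqq\sum_{\substack{T\subseteq V(\lcg)\\ \fim[T]\ \text{nonsingular}}} z^{|T|}=\sum_{j=0}^{n}a_{j}z^{j},
\]
so that $\ip(\lcg;x,1)=P(x-1)$ and the target is $\ip(\lcg;2,1)=P(1)$. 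The coefficients $a_{j}$ are nonnegative integers bounded by $2^{n}$. It is therefore enough to recover $P$.

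For $k=1,\dotsc,n+1$ we build $D^{(k)}$ by Lemma~\ref{lemm:cloneclosure}. This takes polynomial time, since $D^{(k)}$ has $kn\le(n+1)n$ chords, and it supplies a framed chord representation of $\lcg^{(k)}$. We then query the oracle on $D^{(k)}$. By Lemma~\ref{lemm:cloneidentity} the answer is
\[
\ip\bigl(\lcg^{(k)};\xi,1\bigr)=\ip\bigl(\lcg;k(\xi-1)+1,1\bigr)=P\bigl(k(\xi-1)\bigr).
\]
Because $\xi\neq1$, the points $z_{k}=k(\xi-1)$ for $k=1,\dotsc,n+1$ are $n+1$ distinct rationals. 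The values of the degree-$\le n$ polynomial $P$ at these points determine it through a Vandermonde system. We solve that system exactly over $\mathbb{Q}$ and output $P(1)=\sum_{j}a_{j}$.

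The one step needing care is the bit-complexity of this interpolation. Write $\xi=p/q$. Each queried value is a rational with denominator dividing $q^{n}$, and its numerator has $O(n\log(n\lvert p\rvert+\lvert q\rvert))$ bits. The Vandermonde entries $z_{k}^{j}$ have bit-length polynomial in $n$, so Gaussian elimination over $\mathbb{Q}$, or Lagrange interpolation, runs in polynomial time. This gives a polynomial-time Turing reduction from the $(2,1)$ evaluation to the $(\xi,1)$ evaluation on the same class, with representations supplied. Hardness at $(2,1)$ is Theorem~\ref{theo:hardness} through Theorem~\ref{theo:modelidentity}. This is exactly the mechanism of \cite[Thm~4.12(2)]{BH08}, now confined to looped circle graphs by the clone closure.

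Two remarks finish the argument. First, the point $\xi=1$ is genuinely excluded. There $P(0)=1$ for every graph, so no information is obtained, and the degenerate interpolation nodes $z_{k}=0$ reflect this. Second, hardness without a representation follows, as the paper remarks after Corollary~\ref{coro:formulations}, by discarding the representation.
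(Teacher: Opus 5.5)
Your proposal is correct and follows the paper's proof essentially step for step. You clone via Lemma~\ref{lemm:cloneclosure} for $k=1,\dotsc,n+1$, obtain $\ip(\lcg;x,1)$ at the $n+1$ distinct points $k(\xi-1)+1$ by Lemma~\ref{lemm:cloneidentity}, interpolate the degree-$\le n$ polynomial exactly over $\mathbb{Q}$, and evaluate at $x=2$ to invoke Theorem~\ref{theo:hardness}. Your explicit bit-size bound, with denominators dividing $q^{n}$, is just a more detailed version of the paper's remark that every quantity involved has polynomial size.
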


\begin{proof}
Given an oracle for $\ip(\,\cdot\,;\xi,1)$ on represented looped
circle graphs and an input diagram $D$ of size $n$, build
$D^{(k)}$ for $k=1,\dotsc,n+1$ (Lemma~\ref{lemm:cloneclosure})
and query: by Lemma~\ref{lemm:cloneidentity} the answers are the
values of $\ip(\lcg;x,1)$ at the $n+1$ distinct rational points
$k(\xi-1)+1$. Since
$\ip(\lcg;x,1)=\sum_{S:\,\nulltwo\fim[S]=0}(x-1)^{|S|}$
is a polynomial of degree at most $n$ in $x$, Lagrange
interpolation recovers it exactly, and its value at $x=2$ is
$\qtn(D)$ by Theorem~\ref{theo:modelidentity}. Oracle outputs are
exact rationals in binary, the standard fixed-evaluation model of
\cite{BH08}; for fixed rational
$\xi$ every quantity involved --- the cloned diagrams, of at most
$(n+1)n$ chords, the evaluation points, the oracle answers, and
the interpolated coefficients --- has size polynomial in the
input. Now apply Theorem~\ref{theo:hardness}.
\end{proof}

At the excluded point the evaluation is trivial:
$\ip(\lcg;1,1)=1$ for every graph, only the empty induced
subgraph having rank and nullity both zero. Thus every rational
point of the line $y=1$ other than $(1,1)$ is \#P-hard on looped
circle graphs; the same argument applies in the exact
algebraic-number evaluation model of \cite{BH08}, and we state
only the rational case. To the best of our knowledge, no
point of the line was previously known to be \#P-hard; the
propagation argument is Theorem~4.12(2) of \cite{BH08}, and the
content added here is the hard point $(2,1)$ and the closure of
the represented class under cloning.

The loops of $\lcg$ are essential to this hardness, in the
following sharp sense.

\begin{coro}[the loopless line]\label{coro:looplessline}
For an orientable framed chord diagram $D$ with circle graph
$\lcg$,
\[
\ip(\lcg;x,1)=\det\bigl(I+(x-1)\skl\bigr).
\]
Consequently, for every fixed rational $\xi$, evaluating
$\ip(\,\cdot\,;\xi,1)$ on represented loopless circle graphs is
computable in polynomial time.
\end{coro}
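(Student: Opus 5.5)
The plan is to expand both sides over principal minors and match them term by term, exactly as in the proof of Theorem~\ref{theo:orientable}.

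On the left, set $y=1$ in the definition of Section~\ref{subsec:interlace}. With $0^{0}=1$, only the states $S$ with $\nulltwo\fim[S]=0$ survive, and for these $\rktwo\fim[S]=|S|$. Hence
\[
\ip(\lcg;x,1)=\sum_{S:\ \dettwo\fim[S]=1}(x-1)^{|S|}.
\]
This is the formula already used in the proof of Corollary~\ref{coro:line}.

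On the right, apply the standard principal-minor expansion $\det(I+tM)=\sum_{S}t^{|S|}\det M[S]$ with $t=x-1$ and $M=\skl$. This is an identity of polynomials in $x$, obtained by multilinearity in the rows. The proof of Theorem~\ref{theo:orientable} already shows that each $\det\skl[S]$ lies in $\{0,1\}$: skew-symmetry gives $\det\skl[S]=\operatorname{Pf}(\skl[S])^{2}\ge 0$, and principal unimodularity \cite[Cor.~2]{Bou87} bounds it by $1$. Reduction modulo $2$ then sends it to $\dettwo\fim[S]$, where $\fim$ has zero diagonal because $D$ is orientable and so agrees with the adjacency matrix of the loopless $\lcg$. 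So $\det\skl[S]$ is the indicator of $\dettwo\fim[S]=1$, and the two expansions coincide coefficient by coefficient. This proves the polynomial identity.

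For the algorithmic consequence, fix a rational $\xi$. We are given $\lcg$ together with its chord representation $D$, which must be orientable since $\lcg$ is loopless. Build $\skl$ by ordering chords by their first endpoints, form the rational matrix $I+(\xi-1)\skl$, and compute its determinant exactly, for instance by fraction-free Bareiss elimination. All entry sizes stay polynomial because $\xi$ is a fixed rational. The only point needing care is that the representation is genuinely required, since $\skl$ is read off from the chord order. That is covered by the statement's hypothesis that the graphs are represented. Beyond this, I expect no real obstacle: the substantive input, principal unimodularity, is already invoked in Theorem~\ref{theo:orientable}.
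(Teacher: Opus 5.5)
Your proposal is correct and follows the paper's own proof: both sides are expanded over principal minors, $\det\skl[S]$ is identified with the indicator of $\dettwo\fim[S]=1$ by the argument of Theorem~\ref{theo:orientable}, and this is matched with the $y=1$ specialisation of the interlace expansion (the paper phrases the indicator as $\bc(S)=1$, which is equivalent by the framed Cohn--Lempel theorem). Your explicit treatment of the algorithmic consequence is also correct: looplessness forces orientability, and the determinant of $I+(\xi-1)\skl$ is computed exactly over the rationals. The paper leaves that step implicit.
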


\begin{proof}
Expanding over principal minors with weights,
$\det(I+t\skl)=\sum_{S}t^{|S|}\det\skl[S]$, and $\det\skl[S]$ is
the quasi-tree indicator of $S$ (proof of
Theorem~\ref{theo:orientable}); and
$\ip(\lcg;x,1)=\sum_{S:\,\bc(S)=1}(x-1)^{|S|}$, by the expansion
of Section~\ref{subsec:interlace} together with
Theorem~\ref{theo:framedcl}.
\end{proof}

The identity itself is not new: through the model identity it is
the unit-weight case of the weighted Matrix--Quasi-tree Theorem
of Merino, Moffatt and Noble \cite[Thm~6.8]{MMN25}, whose
determinant --- in the form of the characteristic polynomial of
$\skl$, their Theorem~6.1 --- enumerates the quasi-trees of a
bouquet by size. What the placement on the complexity map adds
is the reading: on the line $y=1$ the loops of $\lcg$ are
exactly where the hardness lives, every fixed evaluation being a
single determinant without them and \#P-hard, outside the
trivial point, with them.

We close the section with the classical tractable island of the
A-trail formulation: for \emph{plane} $4$-regular maps the A-trail
count is polynomial, by Kotzig's bijection with spanning trees and
the matrix--tree theorem \cite{Kot68b,GS12}. This is
philosophically consistent with the plane case of quasi-tree
counting (Section~\ref{sec:remarks}); we do not claim that the two
tractable families coincide under the dictionary, the framing of
$D(\fourmap,\atrail)$ being a relation between the smooth
transitions and the chosen trail rather than a record of
planarity.

\section{Concluding remarks}\label{sec:remarks}

The classical anchor of the tractable side is the plane: a
spanning quasi-tree of a plane ribbon graph is a spanning tree, by
the Jordan curve theorem, and Kirchhoff's matrix--tree theorem
\cite{Kir1847} (see, e.g., \cite{Bol98}) counts it. Under the
dictionary of Section~\ref{sec:atrails} this sits beside Kotzig's
polynomial count of A-trails of plane $4$-regular maps
\cite{Kot68b}: two classical tractable islands on the two sides of
the correspondence, both determinantal.

The determinantal territory extends beyond Theorems C and~D: Ding
and Kim \cite{DK26} prove a determinant formula for the strictly
larger class of pseudo-orientable ribbon graphs, by methods quite
different from ours, while Theorem~\ref{theo:hardness} shows that
on unrestricted bouquets the count is \#P-complete.
Theorems C and~D succeed for a common structural reason: the
$\GFtwo$ pattern $S\mapsto\dettwo\fim[S]$ is realized over the
integers with every principal minor in $\{0,1\}$ --- for
Theorem~C by the principally unimodular skew lift $\skl$, and for
Theorem~D by the matrix $K+uu^{\mathsf{T}}$ of
Section~\ref{subsec:oneloop}, whose principal minors reproduce
the quasi-tree indicators statewise, by the rank-one identity
together with Lemma~\ref{lemm:pointwise}. Ding and Kim name the
notion at stake: a real matrix $M$, indexed by the chords,
\emph{detects} a bouquet if $\det M[S]$ is the quasi-tree
indicator of every state $S$ \cite[\S5.1]{DK26}. Theorems C
and~D construct detecting matrices; no matrix detects the
bouquets $C_{n}$, $n\ge 5$, of \cite[Prop.~5.1]{DK26}. To
formulate where we believe the boundary of detection lies, let
$t\in\GFtwo^{n}$ be the vector of twist bits and border the
framed interlacement matrix as
\[
\blift\coloneqq
\begin{pmatrix}
\fim+tt^{\mathsf{T}} & t\\
t^{\mathsf{T}} & 0
\end{pmatrix}
\in\GFtwo^{(n+1)\times(n+1)},
\]
its rows and columns indexed by the chords together with one
border index $\hat{e}$. The block $\fim+tt^{\mathsf{T}}$ has
zero diagonal, the border row records the twists, and the
adjacency between each pair of twisted chords is toggled; the
matrix $\blift$ represents the \emph{lift} of the delta-matroid
of $D$ \cite[\S2]{DK26}, a construction Ding and Kim trace to
Geelen via Murota. A \emph{PU-orientation} of $\blift$, in the
language of \cite{BCG98}, is a principally unimodular
skew-symmetric matrix with entries in $\{0,\pm 1\}$ that
reduces to $\blift$ modulo~$2$.

\begin{obse}[detection from the lift]\label{obse:detection}
If $\blift$ admits a PU-orientation, then some real matrix
detects the bouquet.
\end{obse}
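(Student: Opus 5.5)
The plan is to construct the detecting matrix explicitly from a PU-orientation by the same rank-one device used for Theorem~D in Section~\ref{subsec:oneloop}. Let $W$ be a PU-orientation of $\blift$. Write it in block form as
\[
W=\begin{pmatrix} K & w\\ -w^{\mathsf{T}} & 0\end{pmatrix},
\]
with $K$ a skew-symmetric $\{0,\pm1\}$ matrix on the chords reducing to $\fim+tt^{\mathsf{T}}$, and $w\in\{0,\pm1\}^{n}$ reducing to $t$. The candidate is $M\coloneqq K+ww^{\mathsf{T}}$. I will show $\det M[S]=\mathbf{1}_{D}(S)$ for every state $S$, which is the definition of detection.

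\emph{Step 1: an integer identity.} For each $S$ we have $M[S]=K[S]+w_{S}w_{S}^{\mathsf{T}}$, and $W[S\cup\hat e]$ is $K[S]$ bordered by $w_{S}$ and $-w_{S}^{\mathsf{T}}$. The universal adjugate form of the rank-one identity gives
\[
\det M[S]=\det K[S]+w_{S}^{\mathsf{T}}\operatorname{adj}(K[S])\,w_{S}.
\]
Cofactor expansion of the bordered matrix along its last row and column gives $\det W[S\cup\hat e]=w_{S}^{\mathsf{T}}\operatorname{adj}(K[S])\,w_{S}$. Hence
\[
\det M[S]=\det W[S]+\det W[S\cup\hat e].
\]
The sign in that cofactor expansion is the step I expect to need the most care. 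If it comes out as a minus, I would replace $w$ by $-w$ in the border row only; equivalently, I would check it directly on a $2\times2$ example. Both $K$ and $W$ remain skew-symmetric either way, so nothing else is affected.

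\emph{Step 2: each term is $0$ or $1$, and at most one is nonzero.} Principal submatrices of the skew-symmetric matrix $W$ are skew-symmetric, so their determinants are squares of Pfaffians and hence nonnegative. Principal unimodularity puts them in $\{-1,0,1\}$, so both determinants lie in $\{0,1\}$. The sizes $|S|$ and $|S|+1$ have opposite parity, and skew-symmetric matrices of odd size have determinant zero. So at most one of the two terms is nonzero, and therefore $\det M[S]\in\{0,1\}$.

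\emph{Step 3: identify the value modulo $2$.} Reduce the identity of Step~1 modulo $2$. The matrix $M[S]$ reduces to $\fim[S]+t_{S}t_{S}^{\mathsf{T}}+t_{S}t_{S}^{\mathsf{T}}=\fim[S]$. So $\det M[S]\equiv\dettwo\fim[S]$, which by Theorem~\ref{theo:framedcl} is $\mathbf{1}_{D}(S)$. A $\{0,1\}$-valued integer is determined by its parity, so $\det M[S]=\mathbf{1}_{D}(S)$ for every state $S$, and $M$ detects the bouquet.

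The empty state is consistent with this: $\det M[\emptyset]=1=\mathbf{1}_{D}(\emptyset)$, and $\det W[\{\hat e\}]=0$.
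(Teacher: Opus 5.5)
Your proof is correct and is essentially the paper's. You use the same matrix $M=K+ww^{\mathsf T}$ and the same $\{0,1\}$-plus-parity conclusion; the only difference is that you derive the bordered identity $\det M[S]=\det W[S]+\det W[S\cup\{\hat e\}]$ yourself from the rank-one adjugate formula, whereas the paper quotes it (after discarding the odd-order term) as the Ding--Kim lemma. The sign you were worried about is indeed $+$, since $\det\begin{pmatrix}A&b\\-b^{\mathsf T}&0\end{pmatrix}=b^{\mathsf T}\operatorname{adj}(A)\,b$, so your fallback is never needed --- and it would have failed anyway, because flipping only the border row destroys skew-symmetry.
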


\begin{proof}
Write the PU-orientation in block form and set
\[
\widetilde{\blift}=
\begin{pmatrix}
K & u\\
-u^{\mathsf{T}} & 0
\end{pmatrix},
\qquad
M\coloneqq K+uu^{\mathsf{T}},
\]
and write $\alpha(S)$ for $S$ when $|S|$ is even and for
$S\cup\{\hat{e}\}$ when $|S|$ is odd. The bordered determinant
identity of Ding and Kim \cite[Lem.~2.29]{DK26} states that
\[
\det M[S]=\det\widetilde{\blift}\bigl[\alpha(S)\bigr]
\qquad\text{for every state }S,
\]
an even-order principal minor of a principally unimodular
skew-symmetric matrix, hence a squared Pfaffian in $\{0,1\}$.
Since $M$ reduces to $\fim$ modulo $2$ (the two copies of
$tt^{\mathsf{T}}$ cancelling), this value has the parity of
$\dettwo\fim[S]$; a value in $\{0,1\}$ with the
correct parity is the quasi-tree indicator
(Theorem~\ref{theo:framedcl}), so $M$ detects the bouquet.
\end{proof}

Any detecting matrix packages the count as one determinant ---
$\qtn(D)=\det(I_{n}+M)$ --- and indeed the whole line $y=1$:
by the weighted expansion in the proof of
Corollary~\ref{coro:looplessline},
$\ip(\lcg;x,1)=\det\bigl(I_{n}+(x-1)M\bigr)$, extending that
corollary verbatim from the orientable case.

Theorems C and~D are respectively the zero- and one-twist
cases. For Theorem~C take $K=\skl$ and $u=0$. For Theorem~D
take $K=\skl(D_{0})$ and $u$ the coordinate vector of $e_{0}$:
the principal minors of $\widetilde{\blift}$ avoiding the
border are principal minors of $\skl(D_{0})$, and those through
it obey, for $R\subseteq E(D)$,
\[
\det\widetilde{\blift}\bigl[R\cup\{\hat{e}\}\bigr]=
\begin{cases}
0 & \text{if } e_{0}\notin R,\\
\det\bigl(\skl(D_{0})[R\setminus e_{0}]\bigr)
  & \text{if } e_{0}\in R,
\end{cases}
\]
the first case because the border row restricts to zero, the
second by expansion along the border row and column; odd orders
are covered automatically, both sides vanishing. Hence every
principal minor of $\widetilde{\blift}$ is a principal minor of
$\skl(D_{0})$ or zero, $\widetilde{\blift}$ is principally
unimodular, and $M$ is exactly the matrix of
Section~\ref{subsec:oneloop}.

\begin{conj}\label{conj:detect}
Conversely, whenever some real matrix detects a bouquet,
$\blift$ admits a PU-orientation --- with
Observation~\ref{obse:detection}, detection is then equivalent
to PU-orientability of the bordered lift matrix.
\end{conj}

The conjecture is consistent with what is known beyond our two
theorems: in the proof of \cite[Prop.~5.9]{DK26}, Ding and Kim
exhibit a non-pseudo-orientable bouquet for which the real
adjacency matrix of the right digraph in their Figure~14 is a
PU-orientation of precisely this lift matrix (their vertex $8$
being the border index $\hat{e}$), so
Observation~\ref{obse:detection} yields a detectable bouquet
outside the pseudo-orientable class.

The bordered matrix also admits an invariant description, in
the delta-matroid language of Ding--Kim and the
orthogonal-matroid language of Baker--Ding--Kim. Recall that an
even delta-matroid is \emph{regular} if it is representable by
a principally unimodular skew-symmetric matrix --- equivalently,
representable over every field, by a theorem of Geelen; see
\cite[Def.~2.16]{DK26} --- and that this is the same as the
corresponding orthogonal matroid --- even delta-matroids and
orthogonal matroids being equivalent encodings
\cite[fn.~5]{BDK25} --- admitting a regular orthogonal
representation \cite[\S1.4 and Def.~2.5]{BDK25}; the
tract-theoretic equivalences behind the last coincidence are
those of Jin and Kim \cite[Thm~5.2 and Ex.~3.30]{JK25}.

\begin{obse}[the invariant formulation]\label{obse:invariant}
$\blift$ admits a PU-orientation if and only if the lift of the
delta-matroid of $D$ is regular. Conjecture~\ref{conj:detect}
thus reads: a bouquet is detectable if and only if the lift of
its delta-matroid is regular.
\end{obse}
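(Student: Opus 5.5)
The plan is to unwind both sides into statements about representing one and the same even delta-matroid, and then to match the two definitions.

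First, identify what $\blift$ represents. By the construction recalled above (following \cite[\S2]{DK26}), the lift of the delta-matroid $\Delta(D)$ is an even delta-matroid on $E(D)\cup\{\hat{e}\}$. Its feasible sets are $\alpha(S)$ for the feasible $S$ of $\Delta(D)$, where $\alpha(S)$ adds $\hat{e}$ exactly when $|S|$ is odd. I would check that $\blift$ is a binary representation of this lift. Concretely, for every state $S$,
\[
\dettwo\blift\bigl[\alpha(S)\bigr]=\dettwo\fim[S].
\]
This is the mod-$2$ shadow of the bordered identity \cite[Lem.~2.29]{DK26} used in Observation~\ref{obse:detection}, and it can be verified directly by Pfaffian or Schur-complement expansion along the border row. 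Combined with Theorem~\ref{theo:framedcl}, the feasible sets of the lift are then exactly the sets $R\subseteq E(D)\cup\{\hat{e}\}$ with $\blift[R]$ nonsingular over $\GFtwo$. So $\blift$ is a zero-diagonal symmetric binary matrix representing the lift, taken at the empty feasible set.

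The forward direction is then immediate. A PU-orientation $\widetilde{\blift}$ is a principally unimodular skew-symmetric integer matrix that reduces to $\blift$ modulo $2$. Its even principal minors are squared Pfaffians, which lie in $\{0,1\}$, and they have the same parity as the corresponding minors of $\blift$. Hence $\widetilde{\blift}[R]$ is nonsingular exactly when $R$ is feasible in the lift. So $\widetilde{\blift}$ represents the lift over $\mathbb{R}$ as a principally unimodular skew-symmetric matrix, and the lift is regular by \cite[Def.~2.16]{DK26}.

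The converse direction is the main obstacle. Regularity supplies a PU skew-symmetric matrix $N$ representing the lift, but possibly with respect to a different base feasible set. First I would twist $N$ by a principal pivot at $\varnothing$, which is feasible. Principal pivoting preserves both principal unimodularity and skew-symmetry, so this gives a PU skew-symmetric $N'$ whose nonsingular principal submatrices are exactly the feasible sets. Reducing modulo $2$, $N'$ has entries in $\{0,\pm1\}$, since $1\times1$ and $2\times2$ minors are in $\{0,\pm1\}$. Its reduction $\overline{N'}$ is then a symmetric zero-diagonal binary matrix whose nonsingular principal submatrices index the same set system as $\blift$. The key remaining fact is that a binary symmetric zero-diagonal matrix is determined by its nonsingular principal submatrices. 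Indeed, the $2\times2$ principal minor on $\{i,j\}$ is nonsingular if and only if the $(i,j)$ entry is $1$. Hence $\overline{N'}=\blift$, and $N'$ is the required PU-orientation.

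The second sentence of the observation is then a restatement of Conjecture~\ref{conj:detect} through this equivalence. The delicate points are checking that principal pivoting keeps skew-symmetry and principal unimodularity, which I would cite from Bouchet or \cite{DK26}, and confirming the bordered binary identity above.
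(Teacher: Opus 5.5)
Your proof is correct. The forward direction is the paper's. The converse takes a different and more elementary route to the same endpoint.

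The paper passes through orthogonal matroids. Regularity of the lift gives, via the Jin--Kim equivalences, a regular orthogonal representation $\mathcal{C}$. Its based matrix at the basis for $\varnothing$ is quoted from \cite{BDK25} as skew-symmetric, principally unimodular and $\{0,\pm1\}$-valued, and its $2\times 2$ principal minors are controlled by \cite[Thm~2.8]{BDK25}.

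You instead stay with the definition of regularity as representability by a PU skew-symmetric matrix $N$, say with lift $=\mathcal{D}(N)\mathbin{\triangle}X$, and normalise by a principal pivot. Your phrase ``pivot at $\varnothing$'' should be stated as the pivot on $X$. It is legitimate because feasibility of $\varnothing$ in the lift says exactly that $N[X]$ is nonsingular, and then $\det N[X]=1$. Tucker's formula gives $\mathcal{D}(N*X)=\mathcal{D}(N)\mathbin{\triangle}X$, with skew-symmetry and principal unimodularity preserved.

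From there both arguments end identically, comparing supports through $2\times 2$ principal minors. The paper's closing remark on the uniqueness of the binary representation is your ``key remaining fact''. Your route is self-contained and avoids the tract-theoretic citations. The paper's detour is motivated by reuse: the same based matrix $A_{\mathcal{C}}$ feeds directly into Observation~\ref{obse:torsor}.

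Your preliminary check that $\blift$ represents the lift, which the paper simply takes from \cite[\S2]{DK26}, is also sound. The binary identity $\dettwo\blift[\alpha(S)]=\dettwo\fim[S]$ follows by reducing modulo $2$ the integer bordered identity, applied to any skew-symmetric integer lift of $\fim+tt^{\mathsf{T}}$. Odd-order principal minors vanish on both sides, so every odd $R$ is accounted for as well.
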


\begin{proof}
A PU-orientation represents the lift over the rationals: its
principal minors lie in $\{0,1\}$ and reduce faithfully modulo
$2$, as in the proof of Observation~\ref{obse:detection}, so
its nonsingular principal submatrices are exactly the feasible
sets of the lift. Conversely, if the lift is regular, its
orthogonal matroid admits a regular orthogonal representation;
the based matrix $A$ of that representation at the basis
corresponding to the feasible set $\varnothing$ is
skew-symmetric, principally unimodular, and $\{0,\pm 1\}$-valued
\cite[\S2.2]{BDK25}, and its support equals that of $\blift$: a
pair $\{x,y\}$ is feasible in the lift precisely when the
corresponding $2\times 2$ principal submatrix is nonsingular ---
of $A$ by \cite[Thm~2.8]{BDK25}, of $\blift$ by the $\GFtwo$
representation --- while singletons are never feasible in an
even delta-matroid and both diagonals vanish. So $A$ is a
PU-orientation of $\blift$. (The target admits no choice: a
binary even delta-matroid with $\varnothing$ feasible has a
unique binary representation \cite[\S2.2.2]{DK26}, namely
$\blift$.)
\end{proof}

Which non-orientable ribbon graphs admit such unimodular-matrix
representations is a question Kim poses explicitly in an
expository account of this circle of ideas
\cite{KimMUa,KimMUb}; for bouquets,
Conjecture~\ref{conj:detect} proposes the answer.

Regularity of the lift is exactly the hypothesis under which
the torsor theory of Baker, Ding and Kim applies.

\begin{obse}[torsors on the conjectural class]\label{obse:torsor}
Let the lift of the delta-matroid of $D$ be regular, let
$\mathcal{C}$ be a regular orthogonal representation of it, and
let $A_{\mathcal{C}}$ be the based matrix of
Observation~\ref{obse:invariant} --- a PU-orientation of
$\blift$. Then the group
$\operatorname{Jac}(\mathcal{C})=
\mathbb{Z}^{E(D)\cup\{\hat{e}\}}\big/\bigl\langle\text{rows of }
I+A_{\mathcal{C}}\bigr\rangle$
has order $\qtn(D)$ and, for each acyclic circuit signature of
$\mathcal{C}$, acts simply transitively on the quasi-trees of
$D$.
\end{obse}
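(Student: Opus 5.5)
The plan is to import the torsor theorem of Baker, Ding and Kim and translate its conclusions into quasi-tree language, using the bordered lift and the parity bijection $\alpha$ from the proof of Observation~\ref{obse:detection}.

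\emph{Order of the group.} By Observation~\ref{obse:invariant}, $A_{\mathcal{C}}$ is a PU-orientation of $\blift$. So $I+A_{\mathcal{C}}$ is a square integer matrix of size $n+1$. Its cokernel has order $|\det(I+A_{\mathcal{C}})|$ when that determinant is nonzero. Expanding over principal minors gives
\[
\det(I+A_{\mathcal{C}})=\sum_{R\subseteq E(D)\cup\{\hat e\}}\det A_{\mathcal{C}}[R].
\]
Each summand is in $\{0,1\}$: odd orders vanish by skew-symmetry, and even orders are squared Pfaffians bounded by principal unimodularity. Each summand also reduces modulo $2$ to $\dettwo\blift[R]$. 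The sum therefore counts the feasible sets of the lift, which are the nonsingular principal submatrices of $\blift$.

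It remains to match these with quasi-trees. The map $\alpha$ sends each state $S$ to an even-size subset of $E(D)\cup\{\hat e\}$, and this is a bijection onto the even-size subsets. The bordered identity \cite[Lem.~2.29]{DK26}, reduced modulo $2$ (the two copies of $tt^{\mathsf T}$ cancelling), gives $\dettwo\fim[S]=\dettwo\blift[\alpha(S)]$. Odd-size subsets are never feasible, since $\blift$ has zero diagonal and is symmetric over $\GFtwo$, hence alternating. By Theorem~\ref{theo:framedcl} the lift's feasible sets are thus in bijection with the quasi-trees, and the determinant equals $\qtn(D)\ge 1$. It is nonzero because $\varnothing$ is a quasi-tree. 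Alternatively, I would cite \cite[Cor.~2.9]{BDK25} directly for $|\operatorname{Jac}(\mathcal{C})|$ equalling the number of bases, and use the same bijection.

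\emph{Action.} Baker, Ding and Kim construct, for each acyclic circuit signature of a regular orthogonal matroid, a simply transitive action of $\operatorname{Jac}(\mathcal{C})$ on its bases. Their Jacobian is presented by the based matrix at a chosen basis, and I must check that their presentation matches ours: rows of $I+A$ at the basis corresponding to $\varnothing$. Differences in sign convention ($I\pm A$, transpose) do not change the cokernel up to isomorphism, since $A^{\mathsf T}=-A$. Their bases are the feasible sets of the lift, possibly in complemented or transversal form in the orthogonal-matroid encoding. Transporting the action along the bijection $S\mapsto\alpha(S)$ yields a simply transitive action on quasi-trees. Any bijection of the underlying set transports a simply transitive action.

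The main obstacle is the dictionary step. I need to confirm that the Baker--Ding--Kim Jacobian of $\mathcal{C}$, as they define it intrinsically, is isomorphic to the cokernel of $I+A_{\mathcal{C}}$ for the based matrix at $\varnothing$. I also need the precise correspondence between their bases and feasible sets of the even lift. I would first look for their equation expressing $\operatorname{Jac}$ via $I+A$ (their ``equation (5)''-type presentation recovering the critical group). If the basis correspondence is via transversals $\{x\text{ or }x^*\}$, I would verify that the feasible set $F$ corresponds to the basis taking $x^*$ exactly on $F$, which is standard in \cite[fn.~5]{BDK25}.
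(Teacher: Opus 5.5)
Your proposal is correct. For the action it is the paper's argument: apply Theorem~B of \cite{BDK25} to the lift with the representation $\mathcal{C}$, identify their intrinsic Jacobian with the cokernel of $I+A_{\mathcal{C}}$, and transport the action along $S\mapsto\alpha(S)$. The presentation you say you would look for is exactly what the paper cites, \cite[Def.~3.1, Lem.~2.7 and Prop.~3.2]{BDK25}.

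Where you differ is the order of the group. The paper takes $|\operatorname{Jac}(\mathcal{C})|=\#\{\text{bases}\}$ from those same citations. You instead compute $\det(I+A_{\mathcal{C}})$ directly, combining three ingredients:
\begin{enumerate}
\item the principal-minor expansion of Theorem~\ref{theo:orientable};
\item the mod-$2$ reduction and bordered identity from the proof of Observation~\ref{obse:detection};
\item the alternating-matrix parity argument, which disposes of odd sets.
\end{enumerate}

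Your route stays within the paper's own $\GFtwo$ machinery, and it proves slightly more. The order claim holds for every PU-orientation of $\blift$, whether or not it is known to arise as a based matrix; the paper explicitly says it does not determine which PU-orientations do. Your route does not remove the dependence on \cite{BDK25}, however. The action still needs the identification of their Jacobian with this cokernel, and once that is cited the order comes with it.

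The paper also records that acyclic circuit signatures always exist \cite[Ex.~4.8 and Rem.~4.15]{BDK25}. The statement as phrased does not require this, but it keeps the statement from being vacuous.
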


\begin{proof}
The bases of the lift are the sets $\alpha(S)$ with $S$ a
quasi-tree, in bijection with the quasi-trees. Theorem~B of
\cite{BDK25}, applied to the lift with the representation
$\mathcal{C}$, gives a simply transitive action of
$\operatorname{Jac}(\mathcal{C})$ on the bases for each acyclic
circuit signature, and acyclic signatures always exist
\cite[Ex.~4.8 and Rem.~4.15]{BDK25}. The presentation of
$\operatorname{Jac}(\mathcal{C})$ by $I+A_{\mathcal{C}}$ and
the equality of its order with the number of bases are
\cite[Def.~3.1, Lem.~2.7 and Prop.~3.2]{BDK25}.
\end{proof}

Every ingredient is from \cite{BDK25}; the observation is only
that these lifts are lawful inputs to their general Theorem~B,
which their ribbon-graph results do not reach --- the lift of a
non-pseudo-orientable bouquet is not ribbon-graphic
\cite[Thm~1.1]{DK26}. Three cautions temper it. The group and
the action depend on the equivalence class of the
representation, not on the bouquet alone \cite[fn.~6]{BDK25};
no surface orientation is available to select a canonical class
or signature, so one obtains a family of torsors rather than a
canonical one; and which PU-orientations of $\blift$ arise as
based matrices we do not determine. For pseudo-orientable
bouquets the picture is cleaner: the lift is realized by an
orientable ribbon graph \cite[Thm~1.1]{DK26}, whose detecting
consequence is their Matrix--Quasi-tree Theorem
\cite[Thm~1.2]{DK26}, and the canonical action of
\cite[Thm~A]{BDK25} on that realization --- the sandpile action
of its medial digraph, by \cite[Thms~5.9 and~5.11]{Tot26} ---
transports to the quasi-trees of the original bouquet, per
realization. The example of \cite[Prop.~5.9]{DK26} shows the
regular-lift class is strictly larger.

The conjecture comes with structure on both sides.
Detectability is minor-closed \cite[Lem.~5.2]{DK26} --- chord
deletion preserves it, as does rebasing by partial duality at a
quasi-tree --- and PU-orientability is preserved under vertex
deletion and pivoting \cite[\S2]{BCG98}, so
Conjecture~\ref{conj:detect} proposes parallel obstruction
theories for bouquets and for their bordered lifts. In Ding and
Kim's family the reduction is already visible: $C_{n-2}$ is a
minor of $C_{n}$ \cite[Lem.~5.3]{DK26}, so the infinite
undetectable family reduces to the two base examples $C_{5}$
and $C_{6}$; whether detectability has finitely many
minor-minimal obstructions, or PU-orientability finitely many
pivot-minor-minimal obstructions, remains open. As for recognition, $\blift$ is precisely the
input type of the orientation-construction algorithm of
\cite[\S4]{BCG98}, which produces, in polynomial time, a
candidate signing that is principally unimodular whenever any
is; deciding PU-orientability is thereby polynomial-time
equivalent to testing principal unimodularity of a single
skew-symmetric matrix, a test whose complexity is left open
in \cite{BCG98}. The conjecture thus converts recognition of
detectability into a concrete matrix problem; it does not by
itself settle its complexity.

Theorem~\ref{theo:hardness} bounds the whole programme from
outside. The order of a finitely presented abelian group
$\mathbb{Z}^{k}/\langle\text{rows of }P\rangle$, when finite, is
computable in polynomial time --- for square $P$ it is
$|\det P|$, in general via the Smith normal form \cite{KB79} ---
so unless $\mathrm{FP}=\#\mathrm{P}$ there is no polynomial-time
construction assigning to every bouquet an integer presentation
of an abelian group of order $\qtn$. The orientability
hypotheses of \cite{MMN25,BDK25,Tot26}, the
pseudo-orientability of \cite{DK26}, and the conjectural
boundary of Conjecture~\ref{conj:detect} are in this sense
necessary features of the determinantal and torsor-theoretic
territory, not artifacts of the methods.

We note that being undetectable and being hard to count
are different properties, neither known to imply the other:
where hardness sets in below the unrestricted class --- for
instance at a fixed number of twisted chords, or at fixed Euler
genus --- remains open.

The picture to carry away is compact: for a looped circle graph
presented by chords, the full-rank evaluation $\ip(\lcg;2,1)$ is
one determinant when there are no loops, a sum of two when there
is exactly one, and \#P-complete on the unrestricted class ---
diagrams with several loops may still lie in larger determinantal
classes \cite{DK26}, Conjecture~\ref{conj:detect} proposing a
characterization of the detectable class --- with every rational
point $(\xi,1)$, $\xi\neq 1$, \#P-hard alongside it.

\section*{Acknowledgements}

Claude Fable~5 and GPT-5.6 Sol Pro were used extensively in the
development and preparation of this work.

\bibliographystyle{plain}
\bibliography{quasitree}

\end{document}